\documentclass{amsart}

\usepackage{graphicx}
\usepackage{mathtools}

\newtheorem{thm}{Theorem}[section]
\newtheorem{lem}[thm]{Lemma}
\newtheorem{cor}[thm]{Corollary}
\newtheorem{prop}[thm]{Proposition}

\theoremstyle{definition}

\theoremstyle{remark}
\newtheorem{rem}[thm]{Remark}

\numberwithin{equation}{section}
\newcommand{\Thm}[1]{Theorem~\ref{th:#1}}
\newcommand{\Prop}[1]{Proposition~\ref{prop:#1}}
\newcommand{\Lem}[1]{Lemma~\ref{lem:#1}}
\newcommand{\Cor}[1]{Corollary~\ref{cor:#1}}

\newcommand{\Eq}[1]{\eqref{eq:#1}}
\newcommand{\Sec}[1]{Section~\ref{sec:#1}}
\newcommand{\Fig}[1]{Figure~\ref{fig:#1}}
\makeatletter
\def\rom#1{\mbox{\leavevmode\skip@\lastskip\unskip\/\ifdim\skip@=\z@\else\hskip\skip@\fi{\rm{#1}}}}
\makeatother

\renewcommand{\a}{\alpha}\renewcommand{\b}{\beta}
\newcommand{\gm}{\gamma}\newcommand{\dl}{\delta}
\newcommand{\eps}{\varepsilon}

\newcommand{\lm}{\lambda}\newcommand{\kp}{\kappa}
\newcommand{\sg}{\sigma}
\newcommand{\ph}{\varphi}
\newcommand{\om}{\omega}

\newcommand{\N}{\mathbb{N}}
\newcommand{\R}{\mathbb{R}}
\newcommand{\Z}{\mathbb{Z}}
\newcommand{\cE}{\mathcal{E}}
\newcommand{\cF}{\mathcal{F}}
\newcommand{\cH}{\mathcal{H}}
\newcommand{\dd}{\mathrm{d}}

\newcommand{\bfone}{\mathbf{1}}

\DeclareRobustCommand{\nuesssup}{\mathop{\nu\text{-}\mathrm{ess\,sup}}}
\DeclareRobustCommand{\nuessinf}{\mathop{\nu\text{-}\mathrm{ess\,inf}}}
\DeclareMathOperator{\diam}{diam}
\DeclareMathOperator{\tr}{tr}
\allowdisplaybreaks[3]
\begin{document}

\title[Energy densities on $N$-dimensional Sierpinski gaskets]{Pointwise regularity and irregularity of energy densities on $N$-dimensional Sierpinski gaskets}


\author[M. Hino]{Masanori Hino}
\address{Department of Mathematics, Graduate School of Science, Kyoto University, Kyoto 606-8502, Japan}
\email{hino@math.kyoto-u.ac.jp}
\thanks{This work was supported by JSPS KAKENHI Grant Number 25K07056 and The Kyoto University Foundation.}

\author[K. Inui]{Kanji Inui}
\address{Department of Mechanical Engineering and Science, Faculty of Science and Engineering, Doshisha University, Kyoto 610-0394, Japan}
\email{kinui@mail.doshisha.ac.jp}
\author[K. Nitta]{Kohei Nitta}
\address{Nishinomiya-shi, Hyogo, Japan}
\email{knitta1127@gmail.com}

\subjclass[2020]{Primary 31E05, Secondary 28A80, 31C25}

\date{}

\dedicatory{}

\begin{abstract}
We study the pointwise regularity of energy densities associated with harmonic functions on the $N$-dimensional Sierpinski gasket $(N\ge 2)$ with respect to the Kusuoka measure. For any nonconstant harmonic function, we prove that every Borel representative of the density is discontinuous at every point of a set of full Kusuoka measure. In sharp contrast, on each one-dimensional edge of the gasket---itself a set of zero Kusuoka measure---the density admits a canonical pointwise version, which is $\gm_N$-H\"older continuous on that edge with the explicit and optimal exponent
$\gm_N= \log_2 \{(\sqrt{4N+5} + 1) / (\sqrt{4N+5}-1)\}$.
\end{abstract}

\maketitle
\section{Introduction}\label{sect:1}

Energy measures are fundamental objects associated with local Dirichlet forms.
As a prototypical example, consider the standard energy form
$(\cE,H^1(\mathbb{R}^d))$ on $\mathbb{R}^d$, defined by
\[
    \cE(f,g)=\frac{1}{2}\int_{\R^d}(\nabla f,\nabla g)\,\dd x,
    \qquad f,g\in H^1(\R^d).
\]
The energy measure $\nu_f$ of $f\in H^1(\R^d)$ is then given by
\[
    \nu_f(\dd x)=|\nabla f(x)|^2\,\dd x.
\]
Thus, in the Euclidean case, the Radon--Nikodym derivative of the energy
measure is nothing but the squared length of the gradient. This simple
description relies essentially on the existence of a classical gradient
operator.

\begin{figure}[t]
\centering
\includegraphics[width=0.36\textwidth]{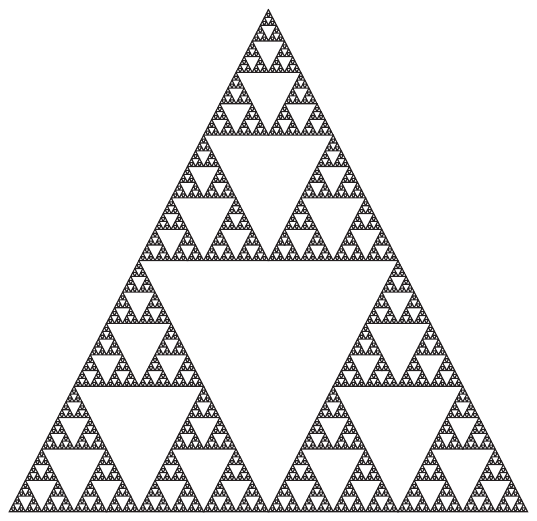}
\caption{The two-dimensional Sierpinski gasket.}
\label{fig:SG}
\end{figure}

For canonical Dirichlet forms on typical fractals, the situation is quite
different. These forms are constructed by renormalized discrete approximations
and do not possess a classical gradient operator. Accordingly, their energy
measures have no simple local expression, and even their basic quantitative
properties are highly nontrivial. For a class of self-similar fractals including
Sierpinski gaskets (see \Fig{SG}), Kusuoka~\cite{Ku89} represented energy measures in terms of
infinite products of matrices. He also proved that these measures are singular
with respect to the canonical self-similar measure, while they are absolutely
continuous with respect to the so-called Kusuoka measure.

The Kusuoka measure therefore provides a natural reference measure for
energy measures on Sierpinski gaskets.  However, even after this reference
measure has been chosen, the Radon--Nikodym derivative of the energy
measure of a function $h$ is defined only $\nu$-a.e.  It is
therefore natural to ask whether this density admits any canonical
pointwise meaning on geometrically distinguished subsets of the gasket,
in particular on one-dimensional edges. 
The main point of the present paper is to show that, despite the
measure-theoretic irregularity of the density, such a canonical edge trace
does exist and has sharp H\"older regularity.

This phenomenon was first observed by Bell, Ho, and Strichartz~\cite{BHS14} for the
two-dimensional Sierpinski gasket. Let $K$ be the two-dimensional Sierpinski
gasket. Let $V_*$ be the set of all vertices and junction points
of $K$, which is a countable dense subset of $K$. For a harmonic function
$h$ on $K$, let $\nu_h$ denote its energy measure, and let $\nu$ denote
the Kusuoka measure. Bell, Ho, and Strichartz proved the following remarkable
properties.
\begin{enumerate}
\item
For any nonconstant harmonic function $h$, any Borel $\nu$-version of the Radon--Nikodym derivative $\dd\nu_h/\dd\nu$ is discontinuous at every point of $K$.
\item
There exists a natural version $\delta\nu_h/\delta\nu$ of $\dd\nu_h/\dd\nu$ on
the intersection of $V_*$ with each edge $E$ of $K$, and this version is
continuous at each point along $E\cap V_*$.
\end{enumerate}
\begin{figure}[!t]
\centering
\includegraphics[width=0.38\textwidth]{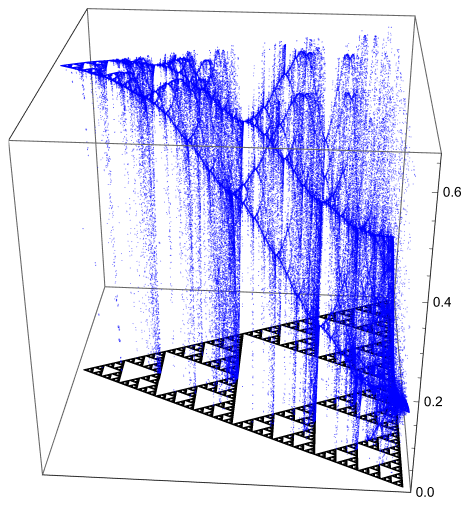}\quad
\includegraphics[width=0.38\textwidth]{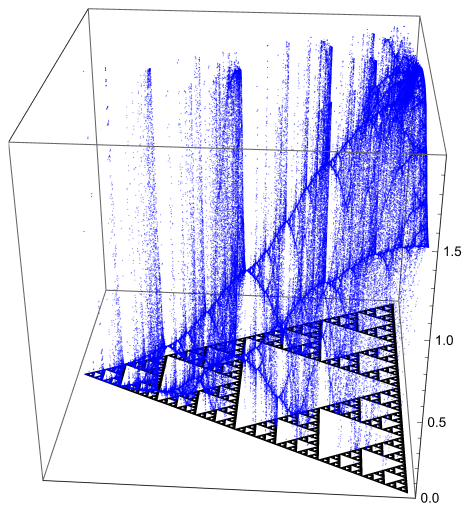}
\caption{Level-11 cell-average approximations of the Radon--Nikodym derivatives $\dd\nu_h/\dd\nu$ for harmonic functions $h$. Left figure: For $h$ with values $1,0,0$ at the vertices. Right figure: For $h$ with values $0,1,-1$ at the vertices.}
\label{fig:density}
\end{figure}
\Fig{density} illustrates these two features numerically. Since each edge of the
gasket is a $\nu$-null set (see \Lem{null} below), the existence of such a natural version on edges is
not a consequence of the usual Radon--Nikodym theorem. It is a genuinely
pointwise statement about energy densities.

The purpose of the present paper is to extend and refine this picture for the
$N$-dimensional Sierpinski gasket, where $N\ge 2$. Our first main result is
that the measure-theoretic irregularity persists in all dimensions. More
precisely, if $h$ is a nonconstant harmonic function, then there exists a
Borel set $\hat K\subset K$ with $\nu(K\setminus \hat K)=0$ such that every Borel
$\nu$-version of $\dd\nu_h/\dd\nu$ is discontinuous at every point of $\hat K$
(\Thm{main1}). Thus no choice of representative can make the density regular on
a set of full Kusuoka measure. Compared with the two-dimensional result of
\cite{BHS14}, our statement contains a possible exceptional $\nu$-null set, but the
proof works uniformly for all $N\ge 2$ and uses only basic features of the
Dirichlet form and harmonic functions.

Our second main result identifies a canonical edge trace of the energy
density.  More precisely, for each point on an edge, we consider the ratios
of the energy measure $\nu_h$ to the Kusuoka measure $\nu$ on the nested
sequence of cells shrinking to that point along the edge.  We prove that
these ratios converge and that the limit is independent of the symbolic
representation of the point.  We call the resulting function on the edge
the canonical edge trace of $\dd\nu_h/\dd\nu$.  This trace is
$\gamma_N$-H\"older continuous on every edge, where
\[
    \gamma_N=\log_2\frac{\sqrt{4N+5}+1}{\sqrt{4N+5}-1}.
\]
Moreover, this exponent is optimal in the following sense: for any
nonconstant harmonic function, there is at least one edge on which the
H\"older exponent of its canonical edge trace cannot be improved; conversely,
on any prescribed edge, such an example can be found
(Theorems~\ref{th:main3} and \ref{th:main4}).
We emphasize that this result is stronger than the
previously known continuity statement in the two-dimensional case.  The
canonical edge trace is defined on the whole edge, not only on junction
points, and its optimal H\"older exponent is explicitly determined.  To the
best of our knowledge, such a sharp trace regularity result for energy
densities on fractals has not been observed before.

The proofs are not merely higher-dimensional adaptations of the arguments in \cite{BHS14}. The proof of the almost-everywhere discontinuity result is based on a detailed study of the behavior of
harmonic functions along edges. In particular, we analyze the location of
extremal points of harmonic functions restricted to an edge, a result which is
of independent interest. 
Although the outline of the proof is similar to that in \cite{BHS14}, we use some techniques different from \cite{BHS14} to handle higher-dimensional Sierpinski gaskets.
The arguments on the regularity properties are entirely different from those in~\cite{BHS14}.
The proof of the H\"older regularity theorem is based
on Hilbert's projective metric and on the contraction properties of certain
iterated maps on invariant cones. The optimality of the H\"older
exponent is proved by an explicit analysis at suitable periodic points.

The Radon--Nikodym derivative $\dd\nu_h/\dd\nu$ is closely related to the abstract
gradient of $h$ in the framework of measurable Riemannian structures
associated with Dirichlet forms~\cite{Hi13}. In that general framework, gradients
are defined only up to $\nu$-null sets. The existence of a canonical edge trace on edges, which are themselves $\nu$-null sets,
suggests that pointwise first-order structures on fractals may be richer than
what is visible from the measure-theoretic formulation alone.
In this sense, the edge trace is not merely a refinement of an a.e.\ representative, but rather an additional pointwise object selected by the cell structure.

The remainder of the paper is organized as follows. In \Sec{statements}, we introduce the
framework and state the main theorems precisely. In \Sec{pre}, we collect
preliminary facts on harmonic functions, energy measures, and the associated
matrices. In \Sec{location}, we study the behavior of harmonic functions on edges,
including the location of their extremal points. \Sec{main1} is devoted to the
proof of \Thm{main1}. In \Sec{main3}, we prove the existence and H\"older
continuity of the canonical edge trace of the energy density. Finally, in
\Sec{main4}, we prove the optimality of the H\"older exponent.

\section{Framework and statement of main theorems}\label{sec:statements}
We first set up the framework. See, e.g., \cite{Ku89,Ku93,Hi10} for more details.
Let $N$ be an integer with $N\ge2$. We construct the $N$-dimensional Sierpinski gasket as follows. Consider a closed $N$-simplex $\tilde K$ in $\R^N$ and let $V_0=\{p_1,p_2,\dots,p_{N+1}\}$ be the set of its vertices. In other words, the convex hull of $V_0$ is equal to $\tilde K$.
For each $i=1,2,\dots, N+1$, a contraction map $\psi_i\colon \R^{N}\to\R^{N}$ is defined as $\psi_i(z)=(z+p_i)/2$.
Let $S=\{1,2,\dots,N+1\}$ and $W_m$ denote $\underbrace{S\times\cdots\times S}_m$ for $m\in \N$. We set $W_0=\{\emptyset\}$ by convention, and set $W_*=\bigcup_{m=0}^\infty W_m$. An element of $W_*$ is called a \emph{word}.
For $w\in W_m$, we call $m$ the \emph{length} of $w$ and write it as $|w|$.
For $i\in S$ and $n\in\N$, $i^n\in W_n$ denotes $\underbrace{ii\cdots i}_n$.
For $w=w_1w_2\cdots w_m\in W_m$ and $w'=w'_1w'_2\cdots w'_{m'}\in W_{m'}$, $\psi_w$ denotes the contraction map $\psi_{w_1}\circ\psi_{w_2}\circ\cdots\circ\psi_{w_m}$ on $\R^N$, and $ww'=w_1w_2\cdots w_m w'_1w'_2\cdots w'_{m'}\in W_{m+m'}$. Here, $\psi_\emptyset$ is defined as the identity map by convention.
For $\om=\om_1\om_2\cdots\in S^\N$ and $n\in\N$, $[\om]_n$ denotes $\om_1\om_2\cdots\om_n\in W_n$. Also, $i^\infty$ means $iii\cdots\in S^\N$ for $i\in S$. Similarly, $(ij)^\infty$ denotes $ijijij\cdots\in S^\N$ for $i,j\in S$.
For $w\in W_*$ and $\om\in S^\N$, $w\om\in S^\N$ is defined in the natural way.

The $N$-dimensional Sierpinski gasket $K$ is defined as $K=\bigcap_{m=0}^\infty \bigcup_{w\in W_m}\psi_w(\tilde K)$. This is the unique nonempty compact subset of $\R^N$ such that the relation $K=\bigcup_{i\in S}\psi_i(K)$ holds.
The metric on $K$ is induced by the Euclidean metric on $\R^N$.
For $w\in W_*$, $K_w$ is defined as $K_w=\psi_w(K)$.
Let $V_m=\bigcup_{w\in W_m}\psi_w(V_0)$ for $m\in\N$ and define $V_*=\bigcup_{m=0}^\infty V_m$. $V_*$ is a dense subset of $K$.

For a general set $V$, $l(V)$ denotes the space of all real functions on $V$. When $V$ is a finite set, the standard inner product on $l(V)$ is denoted by $(\cdot,\cdot)$, that is, $(f,g)=\sum_{x\in V}f(x)g(x)$.
The associated norm is denoted by $|\cdot|$.
We identify $l(V_0)$ with $\R^S\simeq\R^{N+1}$.
Let $D$ be a linear operator on $l(V_0)$ defined as $D_{p,q}=\begin{cases}-N &(p=q)\\1&(p\ne q)\end{cases}$, that is, in the matrix form,
\begin{equation}\label{eq:D}
D=\begin{pmatrix}
-N&1&\cdots&\cdots&1\\
1&-N&\ddots&&\vdots\\
\vdots&\ddots&\ddots&\ddots&\vdots\\
\vdots&&\ddots&-N&1\\
1&\cdots&\cdots&1&-N
\end{pmatrix}.
\end{equation}
The kernel of $D$ is the set of all constant functions.
A bilinear form $Q_0$ on $l(V_0)$ is defined as
\[
Q_0(f,g):=\frac12\sum_{i,j\in S}(f(p_i)-f(p_j))(g(p_i)-g(p_j))
=(f,-D g),\qquad
f,g\in l(V_0).
\]
We define a bilinear form $Q_m$ on $l(V_m)$ for $m\in\N$ as
\[
Q_m(f,g)=\sum_{w\in W_m}Q_0(f\circ \psi_w|_{V_0},g\circ \psi_w|_{V_0}),\qquad
f,g\in l(V_m).
\]
Then, for each $f\in l(V_*)$, one checks that $\left\{\left(\frac{N+3}{N+1}\right)^m Q_m(f|_{V_m},f|_{V_m})\right\}_{m=0}^\infty$ is a nondecreasing sequence. Its limit is denoted by $\cE_*(f,f)\in[0,\infty]$.
Let $\cF_*=\{f\in l(V_*)\mid \cE_*(f,f)<\infty\}$.
Each element $f$ in $\cF_*$ is shown to be uniformly continuous on $V_*$, so $f$ can extend to a continuous function on $K$.
We now define
\begin{align*}
\cF&=\{f\in C(K)\mid f|_{V_*}\in \cF_*\},\\
\cE(f,g)&=\frac12\{\cE_*((f+g)|_{V_*},(f+g)|_{V_*})-\cE_*(f|_{V_*},f|_{V_*})-\cE_*(g|_{V_*},g|_{V_*})\},\quad f,g\in\cF.
\end{align*}
Here, $C(K)$ denotes the set of all real continuous functions on $K$.
Let $\mu$ be the normalized Hausdorff measure on $K$.
By identifying $C(K)$ with a subset of $L^2(K,\mu)$, $(\cE,\cF)$ is a strongly local regular Dirichlet form on $L^2(K,\mu)$.
We note the following self-similar property holds: $\psi_w^* f\in\cF$ for each $f\in\cF$ and $w\in W_*$, and
\begin{equation}\label{eq:selfsimilarity}
\cE(f,g)=\left(\frac{N+3}{N+1}\right)^m\sum_{w\in W_m}\cE(\psi_w^* f,\psi_w^* g),\qquad f,g\in\cF
\end{equation}
for each $m\in\N$.
Here, $\psi_w^* f$ denotes the pullback $f\circ\psi_w$ of $f$ by $\psi_w$.
The energy measure $\nu_f$ of $f\in\cF$ is defined\footnote{This definition is exactly twice the one used in \cite{BHS14}, but this difference does not affect the subsequent arguments.} as the unique positive Borel measure on $K$ such that
\begin{equation}\label{eq:energy_measure}
\int_K g\,\dd\nu_f=2\cE(f,fg)-\cE(f^2,g)\quad
\text{for every }g\in\cF.
\end{equation}
In particular, $\nu_f(K)=2\cE(f,f)$.
From \Eq{selfsimilarity},
\begin{equation}\label{eq:energyselfsimilarity}
\int_K g\,\dd\nu_f=\left(\frac{N+3}{N+1}\right)^m\sum_{w\in W_m}\int_K \psi_w^*g\,\dd\nu_{\psi_w^* f}
\end{equation}
for every $m\ge0$ and $g\in\cF$. The monotone class theorem implies that \Eq{energyselfsimilarity} holds for any bounded Borel functions $g$.
From the general theorem of strongly local regular Dirichlet forms, $\nu_f$ does not charge any single points (see, e.g., \cite[Theorem~4.3.8]{CF12}). From this property and \Eq{energyselfsimilarity}, it holds that
\begin{equation}\label{eq:cellenergy}
\nu_f(K_w)=2\left(\frac{N+3}{N+1}\right)^{|w|}\cE(\psi_w^* f,\psi_w^* f),\quad w\in W_*.
\end{equation}
For each $u\in l(V_0)$, there exists a unique function $h\in\cF$ such that $h|_{V_0}=u$ and
\[
\cE(h,h)=\inf\{\cE(f,f)\mid f\in\cF\text{ and }f|_{V_0}=u\}.
\]
We call such a function $h$ \emph{harmonic}. The space of harmonic functions is denoted by $\cH$.
We define the map $\iota\colon l(V_0)\to \cH$ by $\iota(u)=h$ in the above correspondence. 
Note that $\psi_w^* h\in\cH$ for any $h\in\cH$ and $w\in W_*$.
For each $k\in S$, let $e_k\in l(V_0)$ be defined as $e_k(p_i)=\begin{cases}1&(i=k)\\0&(i\ne k)\end{cases}$. In the natural manner, $e_1,e_2,\dots,e_{N+1}$ are identified with the standard basis of $\R^{N+1}$.
Let $h_k\in\cH$ denote $\iota(e_k)$. 
The Kusuoka measure $\nu$ is defined as $\nu=\sum_{k\in S}\nu_{h_k}$.
Then, for every $f\in\cF$, $\nu_f$ is absolutely continuous with respect to $\nu$, which follows from \cite[Lemma~5.1]{Ku89} (see also \cite[Lemma~5.7]{Hi10}).

The first main result in this paper is stated as follows.
\begin{thm}\label{th:main1}
Let $h\in \cH$ be a nonconstant function. Then, there exists a Borel subset $\hat K$ of $K$ such that $\nu(K\setminus \hat K)=0$ and every Borel $\nu$-version of the Radon--Nikodym derivative $\dd\nu_h/\dd \nu$ is discontinuous at every point of $\hat K$.
\end{thm}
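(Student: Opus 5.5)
The plan is to exhibit, near $\nu$-almost every point $x$, two families of small cells on which the cell averages $\nu_h(K_w)/\nu(K_w)$ stay apart by a fixed amount. First we reduce the statement to cell averages. Let $f$ be a Borel $\nu$-version of $\dd\nu_h/\dd\nu$ and suppose $f$ is continuous at $x$. Then for every word $w$ with $K_w$ inside a small ball around $x$ and $\nu(K_w)>0$, the ratio $\nu_h(K_w)/\nu(K_w)$ is within $\eps$ of $f(x)$. So it suffices to construct a Borel set $\hat K$ of full $\nu$-measure with the following property: for every $x\in\hat K$ and every neighborhood $U$ of $x$, there are cells $K_w,K_{w'}\subset U$ such that
\[
\frac{\nu_h(K_w)}{\nu(K_w)}-\frac{\nu_h(K_{w'})}{\nu(K_{w'})}\ge c ,
\]
where $c>0$ is a constant, or at least a quantity bounded below along a sequence shrinking to $x$. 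This criterion does not depend on the chosen version, so one set $\hat K$ works for all versions.

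\textbf{Step 1: cell ratios via matrices.} By \Eq{cellenergy}, $\nu_h(K_{w})/\nu(K_{w})=\cE(\psi_{w}^*h,\psi_{w}^*h)/\sum_k\cE(\psi_{w}^*h_k,\psi_{w}^*h_k)$. Write $A_i$ for the linear map $l(V_0)\to l(V_0)$ with $\iota(u)\circ\psi_i|_{V_0}=A_iu$ (the harmonic extension matrices), and $A_w=A_{w_m}\cdots A_{w_1}$. The ratio then equals
\[
\frac{(A_wu,-DA_wu)}{\tr(A_w^{\top}(-D)A_w)}, \qquad u=h|_{V_0}.
\]
For a cell $ww''$ this becomes a Rayleigh-type quotient of $A_{w''}$ evaluated at the normalized vector $A_wu$, weighted by the matrix $A_w^{\top}(-D)A_w/\tr(\cdot)$. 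The key point is that for a fixed finite word $w''$ these quotients are continuous in the normalized data.

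\textbf{Step 2: a good configuration.} Find a nonempty open set $\mathcal{O}$ of normalized pairs, consisting of a direction $\hat v=A_wu/|A_wu|$ modulo constants and a normalized Gram matrix $M_w$. For all pairs in $\mathcal{O}$ there should be two fixed finite words $\a,\b$ whose descendant ratios differ by at least $c$. It suffices to find one nondegenerate configuration and use continuity. For $\a$ and $\b$ I would take $1^n$ and $2^n$: by the edge analysis (the location of extremal points of $h$ on edges) the energy of $\psi_{1^n}^*h$ and of $\psi_{2^n}^*h$ relative to the Kusuoka normalization behaves differently depending on whether $\partial h$ vanishes at the vertex, so $h$ can be arranged to be nearly flat at one vertex and not at the other.

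\textbf{Step 3: recurrence.} Show that for $\nu$-a.e. $\om$ (via the coding map $\pi$), the pair $(\hat v,M_{[\om]_m})$ with $v=A_{[\om]_m}u$ lies in $\mathcal{O}$ for infinitely many $m$. I would use an ergodic or Borel--Cantelli argument for the Kusuoka measure: $\nu$ is mixing under the shift, in the sense that conditional probabilities $\nu(K_{w\tau})/\nu(K_w)$ are bounded below in terms of the matrices. Then at each level $m$, given the past, there is a uniformly positive conditional probability of passing through a finite word $\tau$ that sends the current configuration into $\mathcal{O}$. By Lévy's conditional Borel--Cantelli lemma, this happens infinitely often a.s. We then let $\hat K$ be $\pi$ of this full-measure set, after removing the countable null set $V_*$ to handle multiple codings, and check that it is Borel. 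Taking the cells $K_{[\om]_m\tau\a}$ and $K_{[\om]_m\tau\b}$, which shrink to $x$, gives the separation required in the reduction.

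\textbf{Main obstacle.} I expect Step 3 to be the hardest part. The projective action of the $A_i$ is not uniformly contracting: the normalized $A_w$ tend to become rank-one, and $\hat v$ may be driven toward degenerate directions. The uniformity of the lower bound on the conditional probability of entering $\mathcal{O}$ therefore needs a compactness argument over all degenerate limit configurations. For each such configuration one must exhibit a finite word $\tau$ that leads into $\mathcal{O}$, which again relies on the edge/extremal-point analysis of harmonic functions. This is where I would first try to prove an explicit lemma: for every nonconstant normalized harmonic $g$, some bounded-length word $\tau$ makes $\psi_\tau^*g$ nearly flat at one vertex of the cell but not at another.
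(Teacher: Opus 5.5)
\textbf{There is a genuine gap in Steps~2--3.} Your reduction to cell averages is sound, but the recurrence mechanism cannot work as designed.

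Write $v=PA_wu$ and $G=PA_w{}^t\!A_wP$. Then for every word $w'$,
\[
\frac{\nu_h(K_{ww'})}{\nu(K_{ww'})}=\frac{(A_{w'}v,-DA_{w'}v)}{\tr\bigl({}^t\!A_{w'}(-D)A_{w'}G\bigr)} .
\]
If $G=\lm\,a\,{}^t\!a$ and $v=\mu a$ with $a\in\tilde l(V_0)$, this equals $\mu^2/\lm$ for \emph{every} $w'$. That is, all sub-cell ratios coincide with the ratio of $K_w$ itself. This set of degenerate configurations (rank-one $G$ aligned with $v$) is invariant under appending any word. By continuity and compactness, it lies at positive distance from any open $\mathcal O$ on which two fixed words $\a,\b$ separate the ratios. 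Hence the compactness step you plan, exhibiting from each degenerate limit configuration a finite word that leads into $\mathcal O$, is impossible.

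In fact the recurrence claim of Step~3 is false, not merely hard. As you observe, the normalized $G$ become rank one along $\nu$-a.e.\ coding. Where $\dd\nu_h/\dd\nu(x)>0$, the vector $v$ also aligns with their range, because $|v|^2/\tr G$ is the cell ratio and converges to the density. So the configuration converges to the degenerate set and enters $\mathcal O$ only finitely often. Any separation must come from words whose length is unbounded.

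Independently, normalized data only control the ratios relative to the parent ratio $R(w)$. An absolute gap therefore also needs the cell ratios along $x$ to stay bounded below, i.e.\ $\dd\nu_h/\dd\nu>0$ $\nu$-a.e. You never address this. It is a genuine input, which the paper imports from \cite[Theorem~5.6]{Hi10}.

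\textbf{How the paper proceeds.} It avoids any recurrence argument by proving a deterministic statement, \Thm{main1a}: for every $w$, $\inf_{w'}\nu_h(K_{ww'})/\nu(K_{ww'})=0$.

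\begin{enumerate}
\item \emph{Symmetric case.} If $h(\psi_w(p_i))=h(\psi_w(p_j))$, then the nonconstant part of $A_j^nA_iu$ lies in $E_j$ and decays like $(N+3)^{-n}$. Meanwhile $\nu(K_{wij^n})\asymp((N+1)/(N+3))^n$ by \Prop{4}, so the ratio on $K_{wij^n}$ is $O((N+1)^{-2n})$ with $n\to\infty$. This is exactly the unbounded depth your scheme lacks.
\item \emph{General case.} One perturbs to $h'=h+(N+1)(s'-s)\hat h$ with $|s'-s|<\eps$, chosen so that the maximizer of $h'$ on an edge of $K_w$ is an odd dyadic point. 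This uses the continuity and strict monotonicity of $M$ (\Prop{M}). By \Thm{10}, $h'$ is then symmetric on a sub-cell. The perturbation error is controlled uniformly at all depths by $\sqrt{\nu_h(A)}\le\sqrt{\nu_{h'}(A)}+(N+1)\eps\sqrt{\nu_{\hat h}(A)}$ together with \Lem{simple}.
\item \emph{Conclusion.} The $\nu$-essential infimum of any version on every cell is $0$. Martingale convergence and a.e.\ positivity keep the essential supremum over $K_{[x]_n}$ at least $f(x)>0$. Together these give discontinuity at every point of $\hat K=\{x\notin V_*:0<f(x)<\infty\}$.
\end{enumerate}
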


For $p,p'\in K$, $\overline{pp'}$ denotes the line segment connecting $p$ and $p'$.
Let $w\in W_*$ and $i,j\in S$ be distinct elements.
For $\om\in\{i,j\}^\N$, $\bigcap_{n=0}^\infty K_{w[\om]_n}=\{x\}$ for some $x\in\psi_w(\overline{p_ip_j})$. We describe this correspondence as $\Psi_w(\om)=x$.
The map $\Psi_w$ is surjective but not injective; for instance, $\Psi_w(ij^\infty) = \Psi_w(ji^\infty)$.
By an edge we mean a line segment of the form $\psi_w(\overline{p_ip_j})$, where $w\in W_*$ and $i\ne j$.
The construction and H\"older regularity of the canonical edge trace are
described as follows.
\begin{thm}\label{th:main3}
Let $w\in W_*$ and $i,j\in S$ be distinct elements. Let $h\in\cH$ and $\om\in\{i,j\}^\N$.
Then,
\begin{equation}\label{eq:dlh}
\dl_{h,w}(\om):=\lim_{n\to\infty}\frac{\nu_h(K_{w [\om]_n})}{\nu(K_{w [\om]_n})}
\end{equation}
exists. The function $\frac{\dl \nu_h}{\dl \nu}(x):=\dl_{h,w}(\om)$ for $x=\Psi_w(\om)$, $x\in\psi_w(\overline{p_ip_j})$ is well-defined and $\gm_N$-H\"older continuous on $\psi_w(\overline{p_ip_j})$ with $\gm_N=\log_2\frac{\sqrt{4N+5}+1}{\sqrt{4N+5}-1}$.
\end{thm}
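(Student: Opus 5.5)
The plan is to turn the cell ratios into a statement about products of the harmonic extension matrices, and then prove convergence and regularity by a cone-contraction argument in Hilbert's projective metric.

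\emph{Matrix form of the ratio.} For $k\in S$ let $A_k$ be the matrix with $(\psi_k^*h)|_{V_0}=A_k(h|_{V_0})$ for $h\in\cH$. For a word $v=v_1\cdots v_m$ put $A_v=A_{v_m}\cdots A_{v_1}$, so that $(\psi_v^*h)|_{V_0}=A_v u$ with $u=h|_{V_0}$. For harmonic $f$ we have $\cE(f,f)=\frac{N+3}{N+1}Q_0(f|_{V_0},f|_{V_0})$. By \Eq{cellenergy}, the common factor $2((N+3)/(N+1))^{|v|+1}$ then cancels in the ratio, giving
\[
\frac{\nu_h(K_v)}{\nu(K_v)}=\frac{(u,M_v u)}{\tr M_v},\qquad M_v=A_v^{T}(-D)A_v .
\]
Set $v=w[\om]_n$. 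Then
\[
M_v=A_w^T\bigl(A_{\om_1}^T\cdots A_{\om_n}^T(-D)A_{\om_n}\cdots A_{\om_1}\bigr)A_w ,
\]
and the factor $A_w$ is fixed. So it suffices to treat $w=\emptyset$ and to study the column spaces of the products $P_n=A_{\om_1}^T\cdots A_{\om_n}^T$ applied to $(-D)^{1/2}$.

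\emph{Convergence via a contracting cone.} Only the letters $i,j$ occur. I would use the symmetry that permutes the vertices other than $p_i,p_j$. This lets me pass to a low-dimensional quotient: modulo constants and the directions that are annihilated or contracted faster, I expect a space spanned by the $i$-, $j$- and ``rest'' coordinates. On that space I would exhibit a closed convex cone $C$ that is mapped strictly inside itself by both $A_i^T$ and $A_j^T$. I would then compute explicitly the projective (Hilbert) diameter $\Delta$ of $A_i^TC$ and $A_j^TC$. By Birkhoff's theorem each map contracts Hilbert's metric by the factor $\tanh(\Delta/4)$. The target is $e^{\Delta/2}=\sqrt{4N+5}$, so that
\[
\tanh(\Delta/4)=\frac{\sqrt{4N+5}-1}{\sqrt{4N+5}+1}=2^{-\gm_N}.
\]
Granting this, the images $P_n C$ shrink projectively to a single ray $\xi(\om)$ at rate $2^{-n\gm_N}$. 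I also need the remaining eigen-directions of $A_i,A_j$ (with eigenvalues such as $1/(N+3)$) to be dominated by the cone direction. Then $M_n/\tr M_n$ converges to the rank-one matrix $\xi\xi^T/|\xi|^2$, after compensating for $(-D)$ and $A_w$. Hence the limit \Eq{dlh} exists and equals $(u,\xi(\om))^2/|\xi(\om)|^2$ with the appropriate normalization.

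\emph{Well-definedness and H\"older continuity.} The only non-injectivity of $\Psi_w$ comes from pairs $\tau ij^\infty$ and $\tau ji^\infty$. Here the limit direction is $P_\tau$ applied to the Perron eigenvector of $A_j^T$, respectively of $A_i^T$, on $C$. I would check directly that these two directions coincide up to the common prefix map, using the explicit eigenvectors. This is plausible because both codings describe the same junction point $p$, where $h$ is linear near $p$ along the edge. For the H\"older estimate, take $x,y$ on the edge with $2^{-n-1}<|x-y|\le 2^{-n}$. Then $x$ and $y$ admit codings sharing a common prefix of length about $n-1$, or they are adjacent across one junction point, which is handled by the well-definedness step. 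The directions $\xi$ then differ in Hilbert metric by $O(2^{-n\gm_N})$. The map $\xi\mapsto(u,\xi)^2/|\xi|^2$ is Lipschitz on a compact subcone, so this gives $|\tfrac{\dl\nu_h}{\dl\nu}(x)-\tfrac{\dl\nu_h}{\dl\nu}(y)|\le C|x-y|^{\gm_N}$.

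The main obstacle is finding the right invariant cone in the reduced space and computing its image diameter exactly as $\log(4N+5)$. This requires an explicit formula for $A_i$ on the $N$-gasket, namely $1$ on the fixed vertex and the $(N+3)$-denominator averages elsewhere. I would first try the cone cut out by the inequalities satisfied by the energy-gradient vectors of harmonic functions monotone along the edge. I would then compute the cross-ratios of the image rays' endpoints.
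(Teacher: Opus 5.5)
\textbf{The gap is the step that produces the exponent.} The rest of your outline matches the paper's proof:
\begin{itemize}
\item the reduction, via the symmetry in the remaining vertices, to the two-dimensional space $Y$ of mean-zero functions constant on $V_0\setminus\{p_i,p_j\}$, with the complement $Z$ contracted by exactly $(N+3)^{-1}$;
\item the Birkhoff contraction and the rank-one limit;
\item the identification of the two codings at junction points;
\item the prefix-to-distance H\"older argument.
\end{itemize}
However, you only state $e^{\Delta/2}=\sqrt{4N+5}$ as a target and never exhibit a cone that realizes it. Since $\tanh(\Delta/4)$ depends on the cone, this is exactly where $\gamma_N$ has to come from.

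The starting point you suggest does not work. Write $y\in Y$ as $(\alpha,\beta)=(y(p_i),y(p_j))$ and set $r=-\beta/\alpha$. Then $PA_iP$ and $PA_jP$ act by $r\mapsto(1+r)/(N+1)$ and $r\mapsto(N+1)r/(1+r)$. Their attracting fixed points are $1/N$ and $N$, and their repelling ones are $\infty$ and $0$. The cone of functions monotone along the edge is, up to sign, the quadrant $\{\alpha\ge0\ge\beta\}$, i.e.\ $r\in[0,\infty]$ (Propositions~\ref{prop:5} and~\ref{prop:7}). Its two edges are precisely the repelling directions. So the images $[1/(N+1),\infty]$ and $[0,N+1]$ touch the boundary, have infinite Hilbert diameter, and give no contraction at all.

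\textbf{How to fill it.} Use the family $C_M=\{\alpha>0,\ 1/M\le r\le M\}$. For $M>N$ both images lie strictly inside $C_M$. The cross-ratio formula gives both image diameters as $\log R_N(M)$, where $R_N(M)=\frac{(M^2+M-N-1)\{(N+1)M^2-M-1\}}{M(M-N)(MN-1)}$. Minimize over $M>N$. The unique minimizer is $M=N+\sqrt{N^2-1}$, with $R_N=4N+5$; any other $M$ in this family gives a strictly worse exponent.

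Your transposed formulation costs nothing extra here. On mean-zero vectors ${}^t\!A_k$ coincides with $PA_kP$, because $A_k-{}^t\!A_k=\frac{2}{N+3}(\mathbf{1}\,{}^t\!e_k-e_k\,{}^t\mathbf{1})$. Hence the same cone controls your backward products ${}^t\!A_{\omega_1}\cdots{}^t\!A_{\omega_n}$, and no separate dual computation is needed.

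\textbf{Smaller corrections.}
\begin{itemize}
\item \emph{Junction points.} For the codings $\eta ij^\infty$ and $\eta ji^\infty$, the limit directions are ${}^t\!A_\eta\,{}^t\!A_i d_j$ and ${}^t\!A_\eta\,{}^t\!A_j d_i$. Here $d_k$, the $k$th column of $D$, is the dominant eigenvector of ${}^t\!A_k$ on mean-zero vectors. They are not ${}^t\!A_\eta d_j$ and ${}^t\!A_\eta d_i$, which are never parallel. What must be checked is ${}^t\!A_i d_j=-{}^t\!A_j d_i$, which is the matching of normal derivatives at $p_{ij}$. The heuristic that $h$ is linear near junction points is false.
\item \emph{The $Z$-part.} The domination you leave open follows from $\det(PA_kP|_Y)=(N+1)/(N+3)^2$. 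The $Y$-part of an $n$-fold product has squared norm at least a constant times $\{(N+1)/(N+3)^2\}^n$, while the $Z$-part is $(N+3)^{-2n}$. So the $Z$-terms are $O((N+1)^{-n})$ relative to the $Y$-terms. This is $O(\tau^n)$ with $\tau=2^{-\gamma_N}\ge 1/(N+1)$.
\item \emph{Energy factor.} For harmonic $f$, $\cE(f,f)=Q_0(f|_{V_0},f|_{V_0})$, without the factor $(N+3)/(N+1)$. This is harmless, since it cancels in your ratio.
\end{itemize}
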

We refer to $\frac{\dl \nu_h}{\dl \nu}$ as the canonical edge trace of $\frac{\dd \nu_h}{\dd \nu}$.
\begin{rem}
Consider the filtration $\{\cF_n\}_{n=0}^\infty$ of the Borel $\sg$-field of $K$ that is defined as $\cF_n=\sg(\{K_w\mid w\in W_n\})$, $n\in\Z_+$.
By applying the martingale convergence theorem to the process $\left\{\mathbb{E}\left[\frac{\dd\nu_h}{\dd\nu}\mathrel{}\middle|\mathrel{}\cF_n\right]\right\}_{n=0}^\infty$, where $\mathbb{E}[\,\cdot\mid\cF_n]$ denotes the conditional expectation given $\cF_n$ with respect to the probability measure obtained by normalizing $\nu$, we find that for $\nu$-a.e.\,$x\in K\setminus V_*$, $\nu_h(K_{[\kp]_n})/\nu(K_{[\kp]_n})$ converges to $\frac{\dd\nu_h}{\dd\nu}(x)$ as $n\to\infty$, where $\kp$ is the unique element in $S^\N$ such that $\bigcap_{n=0}^\infty K_{[\kp]_n}=\{x\}$.
This justifies that $\frac{\dl \nu_h}{\dl \nu}$ is a natural representative of $\frac{\dd\nu_h}{\dd\nu}$, although every edge of $K$ is a $\nu$-null set, as shown in \Lem{null} below.
\end{rem}
We note that $\gm_N$ is strictly decreasing in $N$, and $0<\gm_N\le \gm_2=\log_2\frac{\sqrt{13}+1}{\sqrt{13}-1}\approx 0.821785$.
The optimality of the H\"older exponent $\gm_N$ is described as follows.
\begin{thm}\label{th:main4}
Let $w\in W_*$.
\begin{enumerate}
\item For any nonconstant $h\in\cH$, there exist distinct $i,j\in S$ and $x\in\psi_w(\overline{p_ip_j})$ such that
\begin{equation}\label{eq:reverse}
\limsup_{y\in\psi_w(\overline{p_ip_j})\setminus\{x\},\ y\to x}\frac{\left|\frac{\dl \nu_h}{\dl \nu}(y)-\frac{\dl \nu_h}{\dl \nu}(x)\right|}{|y-x|^{\gm_N}}>0.
\end{equation}
\item For any distinct $i,j\in S$, there exist $h\in\cH$ and $x\in\psi_w(\overline{p_ip_j})$ such that \Eq{reverse} holds.
\end{enumerate}
\end{thm}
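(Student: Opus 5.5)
The strategy is to exhibit the exponent $\gm_N$ explicitly as the rate at a periodic point of the edge. Take $x=\Psi_w((ij)^\infty)$ (or $\Psi_w(i^\infty)$ if that turns out simpler) and show that the canonical edge trace near $x$ differs from its value at $x$ by exactly order $|y-x|^{\gm_N}$ along a suitable sequence $y\to x$.

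\emph{Step 1: matrix formulas.} Let $A_k$ ($k\in S$) be the harmonic matrices, so that $\psi_k^*\iota(u)=\iota(A_k u)$. Write $P$ for the orthogonal projection onto the complement of constants. By \Eq{cellenergy},
\[
\nu_h(K_{wv}) = c\Bigl(\tfrac{N+3}{N+1}\Bigr)^{|wv|}\,|P A_v A_w u|^2_{-D},\qquad
\nu(K_{wv}) = c\Bigl(\tfrac{N+3}{N+1}\Bigr)^{|wv|}\,\tr\bigl(A_w^{\top}A_v^{\top}(-D)A_vA_w\bigr).
\]
Hence the ratio in \Eq{dlh} equals a quotient $F_w(A_v)$ of two quadratic expressions in the matrix $A_v$. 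Here $v=[\om]_n$ is a word in $\{i,j\}$ only.

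The products $A_v$ with $v\in\{i,j\}^*$ preserve the subspace of $l(V_0)$ that is symmetric in the coordinates $S\setminus\{i,j\}$, together with a complementary antisymmetric part. The midpoint rule $h(\psi_i p_j)=\frac{2(u_i+u_j)+\sum_{k\ne i,j}u_k}{N+3}$ reduces everything to explicit $3\times3$ (modulo constants, $2\times2$) matrices. I will compute the spectrum of $M=A_iA_j$ on this reduced space. I expect its two relevant eigenvalues $\lm_1>\lm_2>0$ to satisfy
\[
\frac{\lm_1}{\lm_2}=\Bigl(\frac{\sqrt{4N+5}+1}{\sqrt{4N+5}-1}\Bigr)^2 ,
\]
which is consistent with the contraction rate behind \Thm{main3}.

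\emph{Step 2: asymptotics at $x$.} Put $v_n=(ij)^n$. Then $A_{v_n}=M^n=\lm_1^n\Pi_1+\lm_2^n\Pi_2+O(\lm_3^n)$, where $\Pi_1,\Pi_2$ are spectral projections. For two continuations $\eta,\eta'\in\{i,j\}^\N$ (for instance $i^\infty$ and $j^\infty$, or $k(ij)^\infty$), let $y_n=\Psi_w(v_n\eta)$. Then $|y_n-x|\asymp 4^{-n}$. Since $F_w$ is homogeneous of degree $0$ and smooth near the limiting direction, a Taylor expansion gives
\[
\frac{\dl\nu_h}{\dl\nu}(y_n)-\frac{\dl\nu_h}{\dl\nu}(x)=(\lm_2/\lm_1)^n\,L(u,\eta)+o\bigl((\lm_2/\lm_1)^n\bigr),
\]
where $L$ is an explicit form that is bilinear in the $\Pi_1$- and $\Pi_2$-components of the relevant vectors. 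Because $(\lm_2/\lm_1)^n=(4^{-n})^{\gm_N}$ by Step 1, \Eq{reverse} follows once $L(u,\eta)\ne0$ for some admissible $\eta$.

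\emph{Step 3: nondegeneracy (the main obstacle).} I must show that $L(u,\eta)\ne0$ for a suitable edge and continuation. For part (2) this is a finite-dimensional check: for fixed $i,j$, choose $u$ generic in the reduced space (e.g.\ $u=e_i$, or a small perturbation) and verify by direct computation that the first-order coefficient does not vanish.

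For part (1), $h$ is given and we may choose the pair $(i,j)$. Suppose, for contradiction, that $L$ vanishes for every pair $(i,j)$ and every continuation $\eta$. I would first try to show that this forces the $\Pi_2$-component of $A_w u$ (or of $A_w^\top$ applied to it) to vanish for all pairs. I would then use the $S_{N+1}$-symmetry of the $A_k$'s to conclude that $PA_wu=0$, i.e.\ $\psi_w^*h$ is constant. Since $A_w$ is invertible, this means $h$ is constant, a contradiction.

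The delicate points are the following. The denominator involves the full trace rather than a single vector, so $L$ mixes the $h$-terms with the Kusuoka-terms. One also needs that varying $\eta$ gives enough independent linear conditions. If a direct algebraic argument fails, I would fall back on the fixed point $x=\Psi_w(i^\infty)$ with $M$ replaced by $A_i^2$, where the spectrum $1,\frac{3}{N+3},\frac{1}{N+3}$ is explicit. I would then check which periodic point realizes $\gm_N$ and redo Step 3 there.
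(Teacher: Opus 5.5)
Steps~1--2 reproduce the paper's local mechanism. You expand at the period-two point $\Psi_w((ij)^\infty)$, where the eigenvalues $\lambda_\pm=(\sqrt{4N+5}\pm1)^2/4$ of $\bar L_j\bar L_i$ give $(\lambda_-/\lambda_+)^n=(4^{-n})^{\gamma_N}$. You then Taylor-expand the degree-zero function $F_w(\varphi)=\varphi(y)^2/B_w(\varphi,\varphi)$ and use $|y_n-x|\asymp4^{-n}$.

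Your plan for (ii) also matches the paper. But you need the observation that the first-order coefficient is proportional to $\ell_+(y)\{B_w(\ell_+,\ell_+)\ell_-(y)-B_w(\ell_+,\ell_-)\ell_+(y)\}$, a nonzero quadratic form in $y$, so a generic $y$ works. A ``direct computation'' with $u=e_i$ is not available, since $B_w$ depends on $w$.

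\textbf{The gap is Step~3 for part (i).} The route you sketch there does not work as stated.
\begin{itemize}
\item Varying $\eta$ gives no new conditions. The trace at $\Psi_w((ij)^n\eta)$ equals $F_w(\alpha_\eta\lambda_+^n\ell_++\beta_\eta\lambda_-^n\ell_-)$, where $\varphi_\eta=\alpha_\eta\ell_++\beta_\eta\ell_-$ is the limiting dual direction of $\eta$. So the coefficient is $(\beta_\eta/\alpha_\eta)\,\partial_{\ell_-}F_w(\ell_+)$, and $\eta$ enters only as a scalar.
\item The condition $\partial_{\ell_-}F_w(\ell_+)=0$ does not force a spectral component of $y$ to vanish. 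It says that $[\ell_+]$ is a critical point of $F_w$ on $\mathbb P(Y^*)$: either $\ell_+(y)=0$, or $\ell_+$ is parallel to the vector $\xi_y$ defined by $\varphi(y)=B_w(\varphi,\xi_y)$.
\item The second alternative is governed by the Kusuoka form $B_w$, which for general $w$ has no $S_{N+1}$-symmetry. Your symmetry step therefore has nothing to act on.
\item The fallback to the vertex $\Psi_w(i^\infty)$ cannot realize $\gamma_N$. There the governing ratio is the eigenvalue ratio $N+1$ of $\bar L_i$ per halving. Along $\Psi_w(i^n\eta)$ the deviation is of order $(N+1)^{-n}$ at distance of order $2^{-n}$, a local exponent $\log_2(N+1)>1>\gamma_N$.
\end{itemize}

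\textbf{The missing ingredient} is the shape of $F_w$ on $\mathbb P(Y^*)$. If $y\ne0$, take a $B_w$-orthonormal basis $\{\xi_1,\xi_2\}$ with $\xi_1$ parallel to $\xi_y$. Then $F_w(r\cos t\,\xi_1+r\sin t\,\xi_2)=B_w(\xi_y,\xi_y)\cos^2t$, so $F_w$ has exactly two critical points. It therefore suffices to have three pairwise non-proportional candidate directions on one edge with $y\ne0$. Such an edge exists because $u=PA_w\iota^{-1}(h)\ne0$.

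The paper takes $x=\Psi_w(i^m(ij)^\infty)$ with $m\in\{0,1,2\}$. The directions $({}^t\bar L_i)^m\ell_+$ have distinct first components $(N+1)^m(\sqrt{4N+5}+1)+\frac2N\{(N+1)^m-1\}$ and constant second component $-2$. The paper then reruns the Taylor argument with $\ell_\pm$ replaced by ${}^t\bar L_{i^m}\ell_\pm$, using $|x_n-x|=\frac13 2^{-|w|-m-2n}|p_i-p_j|$.

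Your pair-varying idea can also be rescued with this fact:
\begin{itemize}
\item On an edge $\{i,j\}$ with $y\ne0$, the orientations $(ij)^\infty$ and $(ji)^\infty$ give the distinct directions $[\ell_+]$ and $[(-2,\sqrt{4N+5}+1)]$.
\item If both were critical, one of them would annihilate $y$. That forces $u(p_i)$ and $u(p_j)$ to be nonzero and of the same sign.
\item If this held on every edge, all $u(p_k)$ would be nonzero of one sign, contradicting $\sum_k u(p_k)=0$.
\end{itemize}
Without the two-critical-point fact, your Step~3 for (i) is a hope rather than a proof.
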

\begin{rem}
As seen from the proof of \Thm{main4}~(ii), in that case we may take
$x$ to be the point dividing the line segment $\psi_w(\overline{p_ip_j})$
internally in the ratio $1:2$.  
In \Thm{main4}~(i), the proof implies that we may take  $x=\Psi_w(i^m (ij)^\infty)$ for some $m\in\{0,1,2\}$.  Equivalently, $x$ is the point dividing
the line segment $\psi_{wi^m}(\overline{p_ip_j})$ internally in the ratio $1:2$.
\end{rem}
\section{Preliminaries}\label{sec:pre}
For each $k\in S$, the representation matrix corresponding to the linear map $l(V_0)\ni u\mapsto \iota(u)\circ\psi_k|_{V_0}\in l(V_0)$ is denoted by $A_k$.
The explicit expression of $A_k=(a_{ij}^{(k)})_{i,j=1}^{N+1}$ is given by
\[
a_{ij}^{(k)}=
\begin{cases}
1 & (i=j=k), \\
0 & (i=k,\; j\neq k), \\
\dfrac{2}{N+3} & (i\neq k,\; j=k),\smallskip\\
\dfrac{2}{N+3} & (i=j,\; i\neq k),\smallskip\\
\dfrac{1}{N+3} & (\text{otherwise})
\end{cases}
\]
(see, e.g., \cite{Ku89}). In particular, $A_1$ is expressed as
\begin{equation}\label{eq:A1}
A_1=\frac{1}{N+3}
\begin{pmatrix}
N+3 & 0 & \cdots & \cdots & 0 \\
2   & 2 & 1      & \cdots & 1 \\
\vdots & 1 & 2 & \ddots & \vdots \\
\vdots & \vdots & \ddots & \ddots & 1 \\
2 & 1 & \cdots & 1 & 2
\end{pmatrix}.
\end{equation}
The matrix $A_k$ for $k\in S\setminus\{1\}$ is obtained from $A_1$ by simultaneously permuting the first and the $k$th rows and columns.

Let $\bfone\in l(V_0)\simeq \R^{N+1}$ denote the constant function on $V_0$ taking value $1$.
For $k\in S$, we define an $(N-1)$-dimensional subspace $E_k$ of $l(V_0)$ as
\begin{equation}\label{eq:Ek}
E_k=\{ u\in l(V_0)\mid (u,e_k)=0\text{ and }(u,\bfone)=0\}.
\end{equation}
Let $d_k$ denote the $k$th column of the matrix $D$ (see \Eq{D}).
\begin{lem}\label{lem:eigen}
Let $k\in S$.
All the eigenvalues of $A_k$ and $^t\! A_k$ are $1$, $(N+1)/(N+3)$, and $1/(N+3)$. The corresponding eigenspaces of $A_k$ are $\R\bfone$, $\R(\bfone-e_k)$, and $E_k$, respectively. The corresponding eigenspaces of $^t\! A_k$ are $\R e_k$, $\R d_k$, and $E_k$, respectively.
\end{lem}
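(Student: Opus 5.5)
By the permutation symmetry noted after \Eq{A1}, it suffices to treat $k=1$. We will use the explicit form \Eq{A1} and simply verify each claimed eigenvector directly. The eigenvalue multiplicities then follow from a dimension count: $1+1+(N-1)=N+1$. Since $A_1$ and ${}^t\!A_1$ have the same characteristic polynomial, the eigenvalues of ${}^t\!A_1$ agree with those of $A_1$, with the same multiplicities. Because the three eigenspaces span $\R^{N+1}$, both matrices are diagonalizable, and each eigenspace is exactly the one we exhibit.

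For $A_1$ we check three things.
\begin{itemize}
\item Row sums. Row $1$ sums to $1$. Row $i\ge2$ sums to $(2+2+(N-1))/(N+3)=1$. Hence $A_1\bfone=\bfone$.
\item The vector $\bfone-e_1=(0,1,\dots,1)$. Coordinate $1$ of $A_1(\bfone-e_1)$ is $0$. Coordinate $i\ge2$ is $(2+(N-1))/(N+3)=(N+1)/(N+3)$.
\item $u\in E_1$, that is, $u_1=0$ and $\sum_i u_i=0$. Coordinate $1$ of $A_1u$ is $u_1=0$. Coordinate $i\ge2$ is $(2u_i+\sum_{j\ge2,j\ne i}u_j)/(N+3)=(u_i+\sum_{j\ge2}u_j)/(N+3)=u_i/(N+3)$.
\end{itemize}
Since $\dim E_1=N-1$, these three spaces account for all $N+1$ dimensions.

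For ${}^t\!A_1$, the column form of \Eq{A1} gives the following.
\begin{itemize}
\item Column $1$ of $A_1$ is $((N+3),2,\dots,2)/(N+3)$, and every other column has first entry $0$. So ${}^t\!A_1e_1$ equals the first row of $A_1$, which is $e_1$.
\item For $u\in E_1$, coordinate $1$ of ${}^t\!A_1u$ is $(N+3)^{-1}\sum_{i\ge2}2u_i=0$. For $j\ge2$, coordinate $j$ is $(N+3)^{-1}(2u_j+\sum_{i\ge2,i\ne j}u_i)=u_j/(N+3)$, using again $u_1=0$ and $\sum_i u_i=0$.
\item $d_1=(-N,1,\dots,1)$. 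Coordinate $1$ of ${}^t\!A_1d_1$ is $(N+3)^{-1}(-N(N+3)+2N)=-N(N+1)/(N+3)$. Coordinate $j\ge2$ is $(N+3)^{-1}(2+(N-1))=(N+1)/(N+3)$. Hence ${}^t\!A_1d_1=\frac{N+1}{N+3}d_1$.
\end{itemize}
Finally, $e_1$, $d_1$ and $E_1$ are linearly independent. Indeed, $d_1$ has nonzero coordinate sum $-N+N=0$, so we instead argue as follows: vectors in $E_1$ have first coordinate $0$ and sum $0$, $d_1$ has sum $0$ but first coordinate $-N\ne0$, and $e_1$ has sum $1$. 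Together these span $\R^{N+1}$.

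There is no real obstacle here. The only care needed is in the distinct eigenvalues $1\ne (N+1)/(N+3)\ne 1/(N+3)$, which hold since $N\ge2$, and in the independence check just made.
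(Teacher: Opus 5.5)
Your proof is correct and follows essentially the paper's route. You verify the eigenvectors $\bfone$, $\bfone-e_k$, $E_k$ (resp.\ $e_k$, $d_k$, $E_k$) directly and then use a dimension count to show that the three eigenspaces exhaust $\R^{N+1}$. The only differences are that you reduce to $k=1$ by the permutation symmetry, and that you check only the inclusion $E_1\subset\ker\bigl(A_1-\frac{1}{N+3}I\bigr)$ rather than the paper's ``if and only if'' characterization, which your dimension count makes unnecessary.

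There is one cosmetic slip: the phrase claiming $d_1$ has ``nonzero coordinate sum $-N+N=0$'' should be deleted. In fact the whole independence check is automatic, since eigenvectors for three distinct eigenvalues are linearly independent.
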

\begin{proof}
First, we discuss $A_k$. It is easy to see that $A_k\bfone=\bfone$ and $A_k(\bfone-e_k)=\frac{N+1}{N+3}(\bfone-e_k)$.
Also, for $u={}^t(u_1,u_2,\dots,u_{N+1})\in\R^{N+1}$,
\begin{align*}
A_k u=\frac1{N+3}u
&\iff u_k=0\text{ and }\sum_{j\in S\setminus\{k\}}u_j=0\\
&\iff u\in E_k.
\end{align*}
Since the linear span of $\R\bfone$, $\R(\bfone-e_k)$, and $E_k$ is $(N+1)$-dimensional, it coincides with $\R^{N+1}$. In other words, there are no other eigenvalues and eigenvectors.

The proof of the claims for $^t\!A_k$ is similar. It is easy to see that $^t\!A_k e_k=e_k$ and $^t\!A_k d_k=\frac{N+1}{N+3}d_k$.
Also, for $u={}^t(u_1,u_2,\dots,u_{N+1})\in\R^{N+1}$,
\begin{align*}
^t\!A_k u=\frac1{N+3}u
&\iff (N+2)u_k+2\sum_{j\in S\setminus\{k\}}u_j=0\text{ and }\sum_{j\in S\setminus\{k\}}u_j=0\\
&\iff u\in E_k.
\end{align*}
By the same argument as above, there are no other eigenvalues and eigenvectors.
\end{proof}
For $w=w_1w_2\cdots w_m\in W_m$, $A_w$ denotes $A_{w_m}A_{w_{m-1}}\cdots A_{w_1}$.
Let
\[
\tilde l(V_0)=\{u\in l(V_0)\mid (u,\bfone)=0\}
\]
and $P\colon l(V_0)\to\tilde l(V_0)\subset l(V_0)$ denote the orthogonal projection onto $\tilde l(V_0)$. 
For $i\in S$, define
\[
v_i:=\frac1{(d_i,\bfone-e_i)}(\bfone-e_i)=\frac1{N}(\bfone-e_i).
\]
We note that $(d_i,v_i)=1$ and $Q_0(v_i,v_i)=1/N$.
The following is a basic asymptotic behavior.
\begin{lem}[See {\cite[Lemmas~6 and 7]{HN06}}]\label{lem:2}
For $i\in S$ and $u\in l(V_0)$, the following hold.
\begin{align*}
\lim_{n\to\infty}\left(\frac{N+3}{N+1}\right)^n P A_i^n u&=(d_i,u)P v_i,\\
\lim_{n\to\infty}\left(\frac{N+3}{N+1}\right)^n \nu_{\iota(u)}(K_{i^n})&=2(d_i,u)^2 Q_0(v_i,v_i)=\frac2N (d_i,u)^2.
\end{align*}
\end{lem}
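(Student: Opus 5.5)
The plan is to diagonalize $A_i$ using \Lem{eigen}, identify the coefficient along the eigenvector $\bfone-e_i$ as $(d_i,u)$, and then use \Eq{cellenergy} together with the fact that the energy of a harmonic function equals $Q_0$ of its boundary values.

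\emph{Step 1: the coefficient along $\bfone-e_i$.} By \Lem{eigen}, $l(V_0)=\R\bfone\oplus\R(\bfone-e_i)\oplus E_i$, so every $u\in l(V_0)$ can be written as $u=a\bfone+b(\bfone-e_i)+e$ with $e\in E_i$. To find $b$, pair with $d_i$.
\begin{itemize}
\item $(d_i,\bfone)=-N+N=0$, since the entries of the column $d_i$ sum to zero.
\item $(d_i,e)=0$. Indeed, $d_i$ is an eigenvector of ${}^t\!A_i$ for the eigenvalue $(N+1)/(N+3)$, and $e$ is an eigenvector of $A_i$ for $1/(N+3)$. Hence
\[
\tfrac1{N+3}(d_i,e)=(d_i,A_ie)=({}^t\!A_id_i,e)=\tfrac{N+1}{N+3}(d_i,e),
\]
which forces $(d_i,e)=0$.
\end{itemize}
Therefore $b(\bfone-e_i)=(d_i,u)v_i$, using the normalization $(d_i,v_i)=1$.

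\emph{Step 2: the first limit.} Applying $A_i^n$ gives
\[
A_i^nu=a\bfone+\Bigl(\tfrac{N+1}{N+3}\Bigr)^n(d_i,u)v_i+(N+3)^{-n}e.
\]
The projection $P$ annihilates $a\bfone$. After multiplying by $\bigl(\frac{N+3}{N+1}\bigr)^n$, the remaining error term is $(N+1)^{-n}Pe\to0$. This gives the first limit, with a geometric rate.

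\emph{Step 3: the second limit.} Put $h=\iota(u)$. Then $\psi_{i^n}^*h$ is harmonic, and its boundary values are $h\circ\psi_{i^n}|_{V_0}=A_i^nu$. I will use the standard fact that $\cE(\iota(v),\iota(v))=Q_0(v,v)$. This holds because the renormalized sequence defining $\cE_*$ is nondecreasing with first term $Q_0(v,v)$, and it is constant along the harmonic extension given by the matrices $A_k$. Combining this with \Eq{cellenergy},
\[
\Bigl(\tfrac{N+3}{N+1}\Bigr)^n\nu_h(K_{i^n})
=2\Bigl(\tfrac{N+3}{N+1}\Bigr)^{2n}Q_0(A_i^nu,A_i^nu)
=2\,Q_0\Bigl(\bigl(\tfrac{N+3}{N+1}\bigr)^nPA_i^nu,\bigl(\tfrac{N+3}{N+1}\bigr)^nPA_i^nu\Bigr).
\]
The second equality holds because $Q_0$ does not see constants, so $Q_0(Pf,Pf)=Q_0(f,f)$.

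By Step 2 and continuity of $Q_0$, the right-hand side converges to $2(d_i,u)^2Q_0(Pv_i,Pv_i)=2(d_i,u)^2Q_0(v_i,v_i)$. Finally, $v_i$ differs by $1/N$ between the vertex $p_i$ and each of the $N$ other vertices, so $Q_0(v_i,v_i)=N\cdot N^{-2}=1/N$. This yields $\frac2N(d_i,u)^2$.

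The only step that is not pure linear algebra is the identity $\cE(\iota(v),\iota(v))=Q_0(v,v)$. If it is not taken as standard, it follows from the compatibility $Q_0(v,v)=\frac{N+3}{N+1}\sum_kQ_0(A_kv,A_kv)$, which is a direct computation with \Eq{A1}.
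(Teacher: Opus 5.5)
Your proof is correct and complete. The paper gives no proof of its own here and simply cites \cite[Lemmas~6 and 7]{HN06}, so there is no in-paper argument to compare against. What you have written is a valid self-contained proof for the $N$-dimensional gasket. You decompose $u$ along the explicit eigenspaces of \Lem{eigen}, and pairing with the left eigenvector $d_i$ picks out the coefficient $b=(d_i,u)/N$ of $\bfone-e_i$. You then combine \Eq{cellenergy} with the identity $\cE(\iota(v),\iota(v))=Q_0(v,v)$. The paper uses exactly this identity elsewhere, when it writes $\nu_{\iota(u)}(K_w)=2\bigl(\frac{N+3}{N+1}\bigr)^{|w|}Q_0(A_wu,A_wu)$ in the proofs of \Lem{simple} and \Thm{main1a}.

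Because the spectrum of $A_i$ is explicit, your argument also gives the convergence rate $O((N+1)^{-n})$ in the first limit. Your justification of $\cE(\iota(v),\iota(v))=Q_0(v,v)$, via constancy of the renormalized sequence along the harmonic extension, is sound. The compatibility identity $Q_0(v,v)=\frac{N+3}{N+1}\sum_kQ_0(A_kv,A_kv)$ only needs to be checked on one nonconstant vector, such as $e_1$, where both sides equal $N$. This suffices because both sides are permutation-invariant quadratic forms that vanish on constants, and such forms span a one-dimensional space.
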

From \Eq{cellenergy}, it holds for $f\in\cF$ and $w,w'\in W_*$ that
\begin{equation}\label{eq:ww'}
\nu_f(K_{ww'})=\left(\frac{N+3}{N+1}\right)^{|w|}\nu_{\psi_w^* f}(K_{w'}).
\end{equation}
Indeed, both sides of \Eq{ww'} are equal to $2\left(\frac{N+3}{N+1}\right)^{|w|+|w'|}\cE(\psi_{w'}^*\psi_w^* f,\psi_{w'}^*\psi_w^* f)$.
\begin{lem}\label{lem:3}
For $i\in S$, $w\in W_*$, and $u\in l(V_0)$,
\[
\lim_{n\to\infty}\left(\frac{N+3}{N+1}\right)^n \nu_{\iota(u)}(K_{w i^n})=\frac2N \left(\frac{N+3}{N+1}\right)^{|w|}(d_i,A_w u)^2 .
\]
\end{lem}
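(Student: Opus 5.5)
We reduce Lemma~\ref{lem:3} to Lemma~\ref{lem:2} by pulling the problem back to the cell $K_w$ with the scaling identity \Eq{ww'}. Let $h=\iota(u)$ and apply \Eq{ww'} with $w'=i^n$:
\[
\nu_{h}(K_{wi^n})=\left(\frac{N+3}{N+1}\right)^{|w|}\nu_{\psi_w^* h}(K_{i^n}).
\]
Multiplying by $\left(\frac{N+3}{N+1}\right)^n$, the factor $\left(\frac{N+3}{N+1}\right)^{|w|}$ does not depend on $n$. It therefore suffices to compute $\lim_{n\to\infty}\left(\frac{N+3}{N+1}\right)^n\nu_{\psi_w^*h}(K_{i^n})$.

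Next we identify $\psi_w^*h$ as a harmonic function. Since $\psi_w^*h\in\cH$, we have $\psi_w^*h=\iota(\psi_w^*h|_{V_0})$, so we need its boundary values. We claim that $\psi_w^*h|_{V_0}=A_w u$, which we check by induction on $|w|$. The case $|w|=0$ is trivial. For $w=w_1\cdots w_m$, write $\psi_w^*h=\psi_{w_m}^*\bigl(\psi_{w_1\cdots w_{m-1}}^*h\bigr)$. By the inductive hypothesis, $\psi_{w_1\cdots w_{m-1}}^*h=\iota(A_{w_{m-1}}\cdots A_{w_1}u)$. By the definition of $A_{w_m}$, applying $\psi_{w_m}^*$ and restricting to $V_0$ multiplies the boundary values by $A_{w_m}$. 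This gives $A_{w_m}A_{w_{m-1}}\cdots A_{w_1}u=A_wu$, consistent with the convention $A_w=A_{w_m}\cdots A_{w_1}$. The only point needing care is that the order of composition matches this convention, since $\psi_w=\psi_{w_1}\circ\cdots\circ\psi_{w_m}$ and hence $\psi_w^*=\psi_{w_m}^*\circ\cdots\circ\psi_{w_1}^*$.

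Finally, apply the second assertion of Lemma~\ref{lem:2} to the vector $A_wu$:
\[
\lim_{n\to\infty}\left(\frac{N+3}{N+1}\right)^n\nu_{\iota(A_wu)}(K_{i^n})=\frac2N(d_i,A_wu)^2.
\]
Combining this with the first step yields the claimed formula. There is no real obstacle here; the only step requiring attention is the bookkeeping of the matrix product order in the second step.
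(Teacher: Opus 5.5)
Your proposal is correct and matches the paper's proof: you apply \Eq{ww'} with $w'=i^n$ to pull the problem back to $\psi_w^*\iota(u)$, and then invoke \Lem{2}. The paper uses the identification $\psi_w^*\iota(u)=\iota(A_wu)$ without comment; your induction supplies it, using the correct composition order $\psi_w^*=\psi_{w_m}^*\circ\cdots\circ\psi_{w_1}^*$.
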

\begin{proof}
From \Lem{2} and \Eq{ww'}, we have
\begin{align*}
\left(\frac{N+3}{N+1}\right)^n \nu_{\iota(u)}(K_{w i^n})
&=\left(\frac{N+3}{N+1}\right)^n \left(\frac{N+3}{N+1}\right)^{|w|}\nu_{\psi_w^* (\iota(u))}(K_{i^n})\\
&\xrightarrow{n\to\infty}\frac2N \left(\frac{N+3}{N+1}\right)^{|w|}(d_i,A_w u)^2.\qedhere
\end{align*}
\end{proof}
\begin{prop}\label{prop:4}
For $i\in S$ and $w\in W_*$,
\[
\nu(K_{w i^n})=\Theta\left(\left(\frac{N+1}{N+3}\right)^n\right)
\qquad\text{as }n\to\infty.
\]
Here, $a_n=\Theta(b_n)$ as $n\to\infty$ means $a_n=O(b_n)$ and $b_n=O(a_n)$ as $n\to\infty$.
\end{prop}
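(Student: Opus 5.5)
Since $\nu=\sum_{k\in S}\nu_{h_k}$ and $h_k=\iota(e_k)$, I will apply \Lem{3} to each $h_k$ with $u=e_k$ and sum. This gives
\[
\lim_{n\to\infty}\left(\frac{N+3}{N+1}\right)^n \nu(K_{wi^n})
=\frac2N\left(\frac{N+3}{N+1}\right)^{|w|}\sum_{k\in S}(d_i,A_we_k)^2
=\frac2N\left(\frac{N+3}{N+1}\right)^{|w|}\bigl|{}^t\!A_w d_i\bigr|^2 .
\]
Here I used $(d_i,A_we_k)=({}^t\!A_wd_i,e_k)$ and Parseval in the standard basis. A sequence converging to a finite limit is bounded, so the $O$-direction follows at once. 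For the reverse bound it suffices to show that the limit is strictly positive, that is, ${}^t\!A_w d_i\neq0$.

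The main step is therefore to show that ${}^t\!A_w$ does not annihilate $d_i$. I will argue that each $A_k$ is invertible. By \Lem{eigen}, its eigenvalues $1$, $(N+1)/(N+3)$ and $1/(N+3)$ are all nonzero, and its eigenspaces span $\R^{N+1}$, so $A_k$ is diagonalizable with nonzero eigenvalues. Hence $A_w=A_{w_m}\cdots A_{w_1}$ is invertible, and so is ${}^t\!A_w$. Since $d_i\neq0$ (its $i$th entry is $-N$), it follows that ${}^t\!A_wd_i\neq0$.

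Combining these, $(\frac{N+3}{N+1})^n\nu(K_{wi^n})$ converges to a positive finite constant. Consequently it is eventually bounded above and below by positive constants, which is exactly $\nu(K_{wi^n})=\Theta((\frac{N+1}{N+3})^n)$. I do not expect any real obstacle; the only point needing care is the positivity of the limit, and invertibility of the $A_k$ settles it.
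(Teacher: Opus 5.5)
Your proposal is correct and is essentially the paper's own proof: apply \Lem{3} to each $h_k=\iota(e_k)$ and sum, rewrite $\sum_{k\in S}(d_i,A_we_k)^2=|{}^t\!A_wd_i|^2$, and conclude positivity of the limit from invertibility of $A_w$ and $d_i\neq0$. Your justification of invertibility via the nonzero eigenvalues in \Lem{eigen} just fills in a detail the paper states without proof (diagonalizability is not needed for it).
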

\begin{proof}
From \Lem{3},
\[
\lim_{n\to\infty}\left(\frac{N+3}{N+1}\right)^n \nu(K_{w i^n})=\frac2N \left(\frac{N+3}{N+1}\right)^{|w|}\sum_{k\in S}(d_i,A_w e_k)^2.
\]
Here, $\sum_{k\in S}(d_i,A_w e_k)^2=|{}^t\!A_w d_i|^2$, which is strictly positive since $A_w$ is invertible and $d_i$ is a non-zero vector. This implies the claim.
\end{proof}
\begin{lem}\label{lem:simple}
Let $h\in\cH$. Then, $\dd\nu_h/\dd\nu\le |P\iota^{-1}(h)|^2$ $\nu$-a.e.
\end{lem}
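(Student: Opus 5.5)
We will prove the bound by showing $\nu_h(B)\le |Pu|^2\,\nu(B)$ for every Borel set $B$, where $u=\iota^{-1}(h)$. Taking $B=\{\dd\nu_h/\dd\nu>|Pu|^2\}$ then gives the a.e.\ statement. Since both measures are finite Borel measures and the cells $K_w$ generate the Borel $\sigma$-field modulo $\nu$-null boundaries, it suffices to treat cells. Indeed, $V_*$ is countable and neither measure charges points, so any finite disjoint union of cells up to $V_*$ can be used, and a monotone class argument extends the inequality to all Borel sets. It therefore suffices to prove $\nu_h(K_w)\le |Pu|^2\nu(K_w)$ for every $w\in W_*$.

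For cells, \Eq{cellenergy} reduces this to the inequality of Dirichlet energies
\[
\cE(\psi_w^*h,\psi_w^*h)\le |Pu|^2\sum_{k\in S}\cE(\psi_w^*h_k,\psi_w^*h_k),
\]
since $\nu=\sum_k\nu_{h_k}$ and the common factor $2((N+3)/(N+1))^{|w|}$ cancels. Because $\psi_w^*h$ and $\psi_w^*h_k$ are harmonic with boundary values $A_wu$ and $A_we_k$, and $\cE(\iota(v),\iota(v))=c\,Q_0(v,v)=c(v,-Dv)$ for a fixed constant $c$ (harmonic energy equals level-$0$ energy), we are left with the matrix inequality
\[
(A_wu,-DA_wu)\le |Pu|^2\sum_{k}(A_we_k,-DA_we_k)=|Pu|^2\tr({}^t\!A_w(-D)A_w).
\]

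To prove it, set $M={}^t\!A_w(-D)A_w$, which is positive semidefinite. Since $A_w\bfone=\bfone$ and $D\bfone=0$, we have $M\bfone=0$, and therefore $(A_wu,-DA_wu)=(Pu,MPu)$. Then
\[
(Pu,MPu)\le \lambda_{\max}(M)|Pu|^2\le \tr(M)\,|Pu|^2,
\]
because the trace of a positive semidefinite matrix dominates its largest eigenvalue. Finally, the identity $\sum_k(A_we_k,-DA_we_k)=\tr M$ holds because $e_k$ is the standard basis.

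The only point requiring care is the passage from cells to Borel sets, but this is standard: the inequality $\nu_h\le|Pu|^2\nu$ holds on the semiring of cells, which generates the Borel $\sigma$-field, and both sides are finite measures, so it extends. Hence $\dd\nu_h/\dd\nu\le|Pu|^2$ $\nu$-a.e.
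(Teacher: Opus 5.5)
Your proof is correct and is essentially the paper's argument. The paper completes $P\iota^{-1}(h)/|P\iota^{-1}(h)|$ to an orthonormal basis $\{u_i\}$ and uses basis-invariance of $\tr({}^t\!A_w(-D)A_w)$ on each cell to get the measure identity $\sum_i\nu_{\iota(u_i)}=\nu$, whence $\nu_h=|P\iota^{-1}(h)|^2\nu_{\iota(u_1)}\le|P\iota^{-1}(h)|^2\nu$ by positivity; that is the same ``trace dominates the Rayleigh quotient'' bound as your $\lambda_{\max}(M)\le\tr M$. The only difference is that the paper extends an identity of finite measures from cells, which is standard uniqueness, whereas you extend an inequality, which needs your semiring argument on $K\setminus V_*$; that argument is valid because $V_*$ is countable and neither measure charges points.
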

\begin{proof}
If $P\iota^{-1}(h)=0$, $\nu_h$ is a zero measure and the claim is true.
Suppose $P\iota^{-1}(h)\ne0$. Let $u_1=P\iota^{-1}(h)/|P\iota^{-1}(h)|$.
Choose $u_2,u_3,\dots,u_{N+1}\in l(V_0)$ so that $u_1,u_2,\dots,u_{N+1}$ form an orthonormal basis of $l(V_0)$. Then, for any $w\in W^*$,
\begin{align*}
\sum_{i=1}^{N+1} \nu_{\iota(u_i)}(K_w)
&=2\left(\frac{N+3}{N+1}\right)^{|w|}\sum_{i=1}^{N+1}(A_w u_i,-D A_w u_i) \qquad\text{(from \Eq{cellenergy})}\\
&=2\left(\frac{N+3}{N+1}\right)^{|w|}\tr(-{{}^t\!A_w}D A_w)\\
&=2\left(\frac{N+3}{N+1}\right)^{|w|}\sum_{i=1}^{N+1}(A_w e_i,-D A_w e_i)\\
&=\nu(K_w).
\end{align*}
That is, $\sum_{i=1}^{N+1} \nu_{\iota(u_i)}=\nu$. Therefore,
\[
\frac{\dd\nu_h}{\dd\nu}
=|P\iota^{-1}(h)|^2\frac{\dd\nu_{\iota(u_1)}}{\dd\nu}
\le |P\iota^{-1}(h)|^2 \quad\text{$\nu$-a.e.}\qedhere
\]
\end{proof}
The following is a well-known fact, but we give a proof for completeness.
\begin{lem}\label{lem:null}
For every $w\in W_*$ and every distinct $i,j\in S$,
$\nu(\psi_w(\overline{p_ip_j}))=0$.
\end{lem}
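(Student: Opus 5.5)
Since $\nu=\sum_{k\in S}\nu_{h_k}$, it suffices to show $\nu_{h_k}(\psi_w(\overline{p_ip_j}))=0$ for each $k$. Rather than handle a general harmonic $f$ directly, we first reduce to the case $w=\emptyset$ via \Eq{ww'}. For any Borel set $B\subset K$ we have $\nu_f(\psi_w(B))\le \nu_f(K_w\cap\psi_w(B))$, and the self-similarity \Eq{energyselfsimilarity}, applied with $g=\bfone_{\psi_w(B)}$, gives
\[
\nu_f(\psi_w(B))\le\left(\frac{N+3}{N+1}\right)^{|w|}\sum_{w'\in W_{|w|}}\nu_{\psi_{w'}^*f}(\psi_{w'}^{-1}\psi_w(B)).
\]
Here $\psi_{w'}^{-1}\psi_w(\overline{p_ip_j})\cap K$ is either empty, a single point (which is not charged), or, when $w'=w$, the segment $\overline{p_ip_j}$ itself. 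Each $\psi_{w'}^*f$ is again harmonic. Hence it is enough to prove $\nu_g(\overline{p_ip_j})=0$ for every $g\in\cH$.

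For the base edge $E=\overline{p_ip_j}$, we cover $E$ by the $2^n$ cells $K_\tau$ with $\tau\in\{i,j\}^n$, so that
\[
\nu_g(E)\le\sum_{\tau\in\{i,j\}^n}\nu_g(K_\tau)=2\left(\frac{N+3}{N+1}\right)^n\sum_{\tau}Q_\ast(A_\tau u),
\]
where $u=\iota^{-1}(g)$ and we write $Q_\ast(v)=\cE(\iota(v),\iota(v))=\tfrac{N+3}{N+1}\cdot$(harmonic energy), which is comparable to $|Pv|^2$. Thus it suffices to show that
\[
\left(\frac{N+3}{N+1}\right)^n\sum_{\tau\in\{i,j\}^n}|PA_\tau u|^2\to0 .
\]
To do this we use the structure of $A_i$ and $A_j$ restricted to $\tilde l(V_0)$ along the edge. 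The subspace spanned by $P e_i$ and $P e_j$, i.e.\ the symmetric functions in the other coordinates, is invariant under both $A_i$ and $A_j$. Its complement consists of vectors vanishing at $p_i,p_j$ with zero sum, and this lies in $E_i\cap E_j$, on which both matrices act by $1/(N+3)$ by \Lem{eigen}; that part contributes $((N+3)/(N+1))^n 2^n (N+3)^{-2n}\to0$. On the invariant two-dimensional quotient (functions determined by the values at $p_i$, $p_j$ and a common value $c$ elsewhere), we compute the $2\times2$ matrices $B_i,B_j$ of $A_i,A_j$ on $\tilde l$ and must show
\[
\left(\frac{N+3}{N+1}\right)^n\sum_{|\tau|=n}|B_\tau v|^2\to0,
\]
i.e.\ that the spectral radius of the positive operator $X\mapsto {}^tB_iXB_i+{}^tB_jXB_j$ on symmetric matrices is less than $(N+1)/(N+3)$.

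I expect this last step to be the main obstacle. The eigenvalues of each $B_k$ are $(N+1)/(N+3)$ and $1/(N+3)$, so the squared sum involves cross terms, and we need a genuine contraction estimate. The approach I would try first is to show that the energy ratio $\nu_g(K_{\tau i}\cup K_{\tau j})/\nu_g(K_\tau)$ (energy concentrated near the edge) is bounded by some $\theta<1$ uniformly over harmonic $g$. This follows by compactness, since the complementary cells $K_{\tau k}$ with $k\ne i,j$ always carry a definite fraction of the energy of a nonconstant harmonic function: if they carried none, $g\circ\psi_{\tau}$ would be constant on those cells, which forces constancy. Iterating gives
\[
\nu_g\Bigl(\,\bigcup_{|\tau|=n}K_\tau\Bigr)\le\theta^n\nu_g(K)\to0,
\]
which avoids the explicit matrix computation.
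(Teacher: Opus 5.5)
Your final compactness argument is correct and is essentially the paper's proof. The paper also covers the edge by the cells $K_{ww'}$, $w'\in\{i,j\}^m$. It gets a uniform per-level factor $1-\frac{N+3}{N+1}c<1$ from the fact that a third cell $K_k$, $k\notin\{i,j\}$, carries a fixed fraction of the energy, expressed as $Q_0(A_kv,A_kv)\ge cQ_0(v,v)$ via invertibility of $PA_k$ on $\tilde l(V_0)$. That is exactly what your compactness step plus ``forces constancy'' (i.e., invertibility of $A_k$ from \Lem{eigen}) delivers; the preliminary reduction to $w=\emptyset$ and the abandoned spectral-radius computation are not needed.
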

\begin{proof}
It suffices to prove that $\nu_h(\psi_w(\overline{p_ip_j}))=0$ for $h\in\cH$.  
Take $k\in S\setminus\{i,j\}$.
From the invertibility of $P A_k$ on $\tilde l(V_0)$, $Q_0(A_k v,A_k v)>0$ for $v\in\tilde l(V_0)\setminus\{0\}$.
Then, there exists $c\in(0,(N+1)/(N+3)]$ such that $Q_0(A_k v,A_k v)\ge c Q_0(v,v)$ for all $v\in\tilde l(V_0)$. This inequality also holds for any $v\in l(V_0)$.
This implies that, for any $v\in l(V_0)$,
\begin{align*}
Q_0(v,v)
&\ge\frac{N+3}{N+1}(Q_0(A_i v,A_i v)+Q_0(A_j v,A_j v)+Q_0(A_k v,A_k v))\\
&\ge\frac{N+3}{N+1}(Q_0(A_i v,A_i v)+Q_0(A_j v,A_j v)+cQ_0(v,v)),
\end{align*}
which implies that
\[
\frac{N+3}{N+1}(Q_0(A_i v,A_i v)+Q_0(A_j v,A_j v))
\le\left(1-\frac{N+3}{N+1}c\right)Q_0(v,v).
\]
Let $u=\iota^{-1}(h)$. Then, for $m\in\N$,
\begin{align*}
\nu_h(\psi_w(\overline{p_i p_j}))
&\le \sum_{w'\in\{i,j\}^m}\nu_h(K_{w w'})\\
&=2\left(\frac{N+3}{N+1}\right)^{|w|+m}\sum_{w'\in\{i,j\}^m}Q_0(A_{w'}A_w u,A_{w'}A_w u)
\quad\text{(from \Eq{cellenergy})}\\
&\le 2\left(\frac{N+3}{N+1}\right)^{|w|}\left(1-\frac{N+3}{N+1}c\right)^m Q_0(A_w u,A_w u).
\end{align*}
By letting $m\to\infty$, we obtain that $\nu_h(\psi_w(\overline{p_i p_j}))=0$.
\end{proof}

\section{Behaviors of harmonic functions on edges}\label{sec:location}
\subsection{Unimodality of harmonic functions on edges}
In this subsection, we fix a nonconstant harmonic function $h$.
For $i,j\in S$, we define the following.
\begin{align*}
p_{ij}&:=\frac12(p_i+p_j)\,(=\psi_i(p_j)=\psi_j(p_i)=p_{ji}),\\
\a_i&:=h(p_i),\\
\a_{ij}&:=h(p_{ij})\,(=h(\psi_i(p_j))=h(\psi_j(p_i))=\a_{ji}),\\
s&:=\frac1{N+1}\sum_{k\in S}\a_k,\\
s_i&:=\frac1{N+1}\sum_{k\in S}\a_{ik}.
\end{align*}
By writing $u={}^t(\a_1,\a_2,\dots,\a_{N+1})\in\R^{N+1}$, $\a_{ij}$ is equal to the $j$th component $(A_i u)_j$ of $A_i u$.
Then, 
\begin{align}
\a_{ij}&=\frac1{N+3}\{(N+1)s+\a_i+\a_j\}\quad\text{if $i\ne j$},\label{eq:aij}\\
s_i&=\frac{1}{N+1}\sum_{k=1}^{N+1}(A_i u)_k=\frac1{N+3}\{(N+1)s+2\a_i\}.\label{eq:si}
\end{align}
In particular,
\begin{align}
s_i-\a_i&=\frac{N+1}{N+3}(s-\a_i),\label{eq:siai}\\
\a_{ij}-s_i&=\frac1{N+3}(\a_j-\a_i),\label{eq:aijsi}\\
s_j-\a_j&=\frac{N+1}{N+3}(s-\a_j),\label{eq:sjaj}\\
\a_{ij}-s_j&=\frac1{N+3}(\a_i-\a_j).\label{eq:aijsj}
\end{align}

For $x,y\in K$, we often identify the line segment $\overline{xy}$ with the interval $[0,1]$ by the map $\Phi\colon[0,1]\to\overline{xy}$ defined as $\Phi(t)=(1-t)x+ty$.
We say that a function $f$ on $\overline{xy}$ is increasing if $f$ is increasing on $[0,1]$ by identifying $\overline{xy}$ with $[0,1]$ by $\Phi$.

In what follows, $i$ and $j$ are assumed to be distinct elements of $S$.
\begin{prop}\label{prop:5}
If $\a_i\le s\le \a_j$ and $\a_i<\a_j$, then $h$ is strictly increasing on $\overline{p_i p_j}$.
\end{prop}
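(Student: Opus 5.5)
The plan is to use self-similarity: show that the hypothesis of the proposition passes from $\overline{p_ip_j}$ to the two half-edges $\psi_i(\overline{p_ip_j})=\overline{p_ip_{ij}}$ and $\psi_j(\overline{p_ip_j})=\overline{p_{ij}p_j}$, and then iterate. For a word $w\in\{i,j\}^m$, the function $\psi_w^*h$ is harmonic with boundary data $A_wu$. So it suffices to check the hypothesis for $A_iu$ and for $A_ju$, with the same pair $(i,j)$.

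\emph{Cell $i$.} For $\psi_i^*h$ the vertex values at $p_i,p_j$ are $\a_i$ and $\a_{ij}$, and the mean over $V_0$ is $s_i$. By \Eq{siai}, $s_i-\a_i=\frac{N+1}{N+3}(s-\a_i)\ge0$. By \Eq{aijsi}, $\a_{ij}-s_i=\frac1{N+3}(\a_j-\a_i)>0$. Hence $\a_i\le s_i<\a_{ij}$, and in particular $\a_i<\a_{ij}$.

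\emph{Cell $j$.} For $\psi_j^*h$ the values at $p_i,p_j$ are $\a_{ij}$ and $\a_j$, and the mean is $s_j$. By \Eq{aijsj}, $s_j-\a_{ij}=\frac1{N+3}(\a_j-\a_i)>0$. By \Eq{sjaj}, $\a_j-s_j=\frac{N+1}{N+3}(\a_j-s)\ge0$. Hence $\a_{ij}<s_j\le\a_j$, and in particular $\a_{ij}<\a_j$.

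So both subcells satisfy the hypothesis with the same orientation. The cell $j$ case is the only point needing care: one must confirm that $\psi_j$ sends $p_i\mapsto p_{ij}$ and $p_j\mapsto p_j$, so the orientation along the edge is preserved.

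By induction on $m$, for every $w\in\{i,j\}^m$ the values of $h$ at the two endpoints $\psi_w(p_i)$ and $\psi_w(p_j)$ satisfy $h(\psi_w(p_i))<h(\psi_w(p_j))$. The level-$m$ vertices on $\overline{p_ip_j}$ are the points $\Phi(k2^{-m})$, and consecutive ones are exactly such pairs of endpoints. Therefore $t\mapsto h(\Phi(t))$ is strictly increasing on the dyadic rationals of $[0,1]$.

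Since $h$ is continuous on $K$, it is nondecreasing on $[0,1]$. Strictness follows as well: for $t<t'$, pick dyadic points $t<d<d'<t'$, so that $h(\Phi(t))\le h(\Phi(d))<h(\Phi(d'))\le h(\Phi(t'))$. I expect no real obstacle. The only care needed is the bookkeeping of which vertex values and which mean play the roles of $\a_i$, $\a_j$ and $s$ in each subcell, and the identities \Eq{siai}--\Eq{aijsj} give exactly that.
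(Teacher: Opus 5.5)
Your proof is correct and is essentially the paper's argument: the paper also derives $\a_i\le s_i<\a_{ij}<s_j\le\a_j$ from \Eq{siai}--\Eq{aijsj}, iterates on $\psi_i^*h$ and $\psi_j^*h$ to get strict monotonicity at the dyadic points of $\overline{p_ip_j}$, and then concludes by continuity. Your checks that $\psi_j$ preserves orientation along the edge, and that strictness survives the passage to the limit via intermediate dyadic points, simply make explicit steps the paper leaves implicit.
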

\begin{proof}
From \Eq{siai}--\Eq{aijsj} and the assumption, it holds that $\a_i\le s_i<\a_{ij}<s_j\le \a_j$.
This means that the same assumptions hold for the functions $\psi_i^* h$ and $\psi_j^* h$.
Repeating this argument, we can show that $h$ is strictly increasing on the set $\{m/2^n\mid n\in\N,\ m=0,1,\dots,2^n\}\subset[0,1]\simeq\overline{p_i p_j}$. Since $h$ is continuous, the claim holds.
\end{proof}
\begin{cor}\label{cor:6}
\begin{enumerate}
\item If $\a_i=\a_j<s$, then $h$ attains its maximum on $\overline{p_i p_j}$ at $p_{ij}$.
\item If $s<\a_i=\a_j$, then $h$ attains its minimum on $\overline{p_i p_j}$ at $p_{ij}$.
\end{enumerate}
\end{cor}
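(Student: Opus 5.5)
We reduce the corollary to \Prop{5} applied to the two half-edges $\overline{p_i p_{ij}}=\psi_i(\overline{p_ip_j})$ and $\overline{p_{ij}p_j}=\psi_j(\overline{p_ip_j})$. We treat only (i); (ii) follows by applying (i) to the nonconstant harmonic function $-h$, since this reverses all the inequalities.

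Assume $\a_i=\a_j<s$. First look at the function $\psi_i^*h$, which is harmonic with boundary values $A_i u$. Its values at the images of $p_i$ and $p_j$ are $\a_i$ and $\a_{ij}$, and its vertex average is $s_i$. By \Eq{siai}, $s_i-\a_i=\frac{N+1}{N+3}(s-\a_i)>0$. By \Eq{aijsi} and $\a_i=\a_j$, we get $\a_{ij}=s_i$. Hence $\psi_i^*h$ satisfies
\[
(\psi_i^*h)(p_i)=\a_i<s_i=\a_{ij}=(\psi_i^*h)(p_j).
\]
These are exactly the hypotheses of \Prop{5}, applied to $\psi_i^*h$ with the same pair $i,j$. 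Note that $\psi_i^*h$ is nonconstant, because its values at $p_i$ and $p_j$ differ. So \Prop{5} says $\psi_i^*h$ is strictly increasing on $\overline{p_ip_j}$. Equivalently, $h$ is strictly increasing on $\psi_i(\overline{p_ip_j})=\overline{p_ip_{ij}}$, going from $p_i$ to $p_{ij}$.

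Next apply the same argument to $\psi_j^*h$. Using \Eq{sjaj} and \Eq{aijsj}, we get $s_j>\a_j$ and $\a_{ij}=s_j$. Swapping the roles of $i$ and $j$ in \Prop{5}, this time with $\a_j<s_j=\a_{ij}$, shows that $h$ is strictly increasing on $\overline{p_jp_{ij}}$ from $p_j$ toward $p_{ij}$. So $h$ is strictly decreasing from $p_{ij}$ to $p_j$.

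Combining the two halves, $h$ increases up to the midpoint $p_{ij}$ and then decreases, so its maximum on $\overline{p_ip_j}$ is attained at $p_{ij}$ (and only there). The one point needing care is that \Prop{5} allows the boundary case $s=\a_j$. Here we use exactly this case, with $s_i=\a_{ij}$, so we must check that the proposition's hypotheses $\a_i\le s\le\a_j$ and $\a_i<\a_j$ are really met: they are, since equality is allowed in the upper bound and $\a_i<\a_{ij}$ is strict. Nothing else is an obstacle.
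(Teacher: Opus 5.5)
Your proof is correct and follows the paper's argument. From \Eq{siai}--\Eq{aijsj} you obtain $\a_i<s_i=\a_{ij}$ and $\a_j<s_j=\a_{ij}$, then apply \Prop{5} to $\psi_i^*h$ and $\psi_j^*h$ to get strict monotonicity of $h$ toward $p_{ij}$ on both half-edges. Part (ii) then follows from (i) applied to $-h$.
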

\begin{proof}
\begin{enumerate}
\item From \Eq{siai}--\Eq{aijsj}, it holds that $\a_i<s_i=\a_{ij}$ and $\a_j<s_j=\a_{ji}$.
Therefore, we can apply \Prop{5} to $\psi_i^* h$ and $\psi_j^* h$ to obtain that $h$ is strictly increasing on $\overline{p_i p_{ij}}$ and on $\overline{p_j p_{ij}}$.
\item It suffices to apply (i) to $-h$ in place of $h$.
\qedhere
\end{enumerate}
\end{proof}
\begin{prop}\label{prop:7}
If $h$ is nondecreasing on $\overline{p_i p_j}$, then $s\le\a_j$.
\end{prop}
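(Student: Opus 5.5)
The plan is to argue by contradiction, looking at the values of $h$ on the edge $\overline{p_ip_j}$ at points accumulating at $p_j$. Set $u={}^t(\a_1,\dots,\a_{N+1})=\iota^{-1}(h)$ and assume that $h$ is nondecreasing on $\overline{p_ip_j}$ but $s>\a_j$.

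First, use \Lem{eigen} to decompose $u$ along the eigenspaces of $A_j$:
\[
u=c_1\bfone+c_2(\bfone-e_j)+e,\qquad c_1,c_2\in\R,\ e\in E_j .
\]
Every vector in $E_j$ has zero $j$th component and zero sum, and $(\bfone-e_j)_j=0$. Reading off the $j$th component gives $\a_j=c_1$. Averaging all components gives $s=c_1+\frac{N}{N+1}c_2$. Hence
\[
s-\a_j=\frac{N}{N+1}c_2,
\]
so the assumption $s>\a_j$ is equivalent to $c_2>0$.

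Next, consider the points $x_n:=\psi_{j^n}(p_i)\in\overline{p_ip_j}$. Under the identification $\Phi$ they correspond to the parameters $1-2^{-n}<1$, so they lie strictly before $p_j$ on the segment. By definition of $A_{j^n}=A_j^n$, we have $h(x_n)=(A_j^nu)_i$. Applying \Lem{eigen} again,
\[
A_j^n u=c_1\bfone+c_2\Bigl(\frac{N+1}{N+3}\Bigr)^n(\bfone-e_j)+\Bigl(\frac1{N+3}\Bigr)^n e .
\]
Taking the $i$th component (recall $i\ne j$) gives
\[
h(x_n)-\a_j=c_2\Bigl(\frac{N+1}{N+3}\Bigr)^n+\Bigl(\frac1{N+3}\Bigr)^n e_{(i)},
\]
where $e_{(i)}$ denotes the $i$th component of $e$. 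Since $c_2>0$ and $\frac{N+1}{N+3}>\frac1{N+3}$, the first term dominates for large $n$, so $h(x_n)>\a_j=h(p_j)$. This contradicts monotonicity, because $x_n$ precedes $p_j$. Therefore $s\le\a_j$.

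There is no serious obstacle here. The only point needing care is the bookkeeping: the $j$th and average components of each eigenvector must be checked correctly, and the orientation of $\overline{p_ip_j}$ under $\Phi$ must be fixed so that $x_n$ indeed lies before $p_j$. The argument does not even require the full nondecreasing hypothesis; it only uses $h(x)\le h(p_j)$ for $x$ on the edge close to $p_j$.
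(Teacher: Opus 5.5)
Your proof is correct. The underlying idea matches the paper's, but the computation differs. The paper proves \Eq{kh} by induction on $k$: it applies the inequality to $\psi_j^*h$ and uses \Eq{aij} and \Eq{si}. It then divides by $\sum_{l=0}^k(N+1)^l$ and lets $k\to\infty$.

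In your notation, using $\a_j=c_1$, $\a_i=c_1+c_2+e_{(i)}$ and $s=c_1+\frac{N}{N+1}c_2$, inequality \Eq{kh} at level $k$ reads $c_2(N+1)^k+e_{(i)}\le 0$. This is exactly $(N+3)^k\bigl(h(\psi_{j^k}(p_i))-\a_j\bigr)\le0$. So both arguments test monotonicity only at the points $x_k=\psi_{j^k}(p_i)$ approaching $p_j$, and both conclude from the same leading term.

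Your spectral decomposition via \Lem{eigen} gives this closed form directly. It also makes clear why the limit isolates exactly $s-\a_j$ (through the coefficient $c_2$), and why only the behavior near $p_j$ matters. The paper's induction avoids the eigen-decomposition and stays within the elementary recursions on $(\a_i,\a_j,s)$ used throughout that section.
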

\begin{proof}
We first show that, for any $k\in\N$ and any $h\in\cH$ that is nondecreasing on $\overline{p_i p_j}$,
\begin{equation}\label{eq:kh}
\sum_{l=0}^k(N+1)^l\a_j\ge\a_i+\sum_{l=1}^k(N+1)^l s.
\end{equation}
When $k=1$, \Eq{kh} is equivalent to $\a_j\ge\frac1{N+3}\{(N+1)s+\a_i+\a_j\}=\a_{ij}$, which is true by assumption.

Suppose \Eq{kh} holds for some $k$. Applying \Eq{kh} to $\psi_j^* h$, we have
\begin{align*}
&\sum_{l=0}^k(N+1)^l\a_j\\
&\ge\a_{ij}+\sum_{l=1}^k(N+1)^l s_j\\
&=\frac1{N+3}\{(N+1)s+\a_i+\a_j\}+\frac1{N+3}\{(N+1)s+2\a_j\}\sum_{l=1}^k(N+1)^l\\
&\qquad\text{(by \Eq{aij} and \Eq{si} with $i$ replaced by $j$)}\\
&=\frac1{N+3}\a_i+\frac1{N+3}\left\{2\sum_{l=0}^k(N+1)^l-1\right\}\a_j+\frac{N+1}{N+3}\sum_{l=0}^k(N+1)^l s,
\end{align*}
which is equivalent to \Eq{kh} with $k$ replaced by $k+1$.
Therefore, \Eq{kh} holds by the mathematical induction.

By dividing both sides of \Eq{kh} by $\sum_{l=0}^k(N+1)^l$ and letting $k\to\infty$, we obtain that $\a_j\ge s$.
\end{proof}
In $\R^N$, let $H_{ij}$ denote the perpendicular bisecting hyperplane of the line segment $\overline{p_i p_j}$, and let $R_{ij}$ be the reflection across $H_{ij}$, mapping each point in  $\R^N$ to its symmetric point with respect to $H_{ij}$.
In particular, $R_{ij}(x)=p_i+p_j-x$ for $x\in\overline{p_i p_j}$.
It is easy to see that $h\circ R_{ij}\in\cH$ for $h\in\cH$.
\begin{prop}\label{prop:8}
Suppose $\a_i\le\a_j\le s$ and $\a_i<s$. Then, there exists a unique $x\in\overline{p_{ij} p_j}$ such that $h$ is strictly increasing on $\overline{p_i x}$ and strictly decreasing on $\overline{x p_j}$. Moreover, if $\a_j<s$ then $x\ne p_j$. If $\a_i<\a_j$ then $x\ne p_{ij}$.
\end{prop}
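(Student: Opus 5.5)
The plan is to prove \Prop{8} by a nested bisection along $\overline{p_ip_j}$. At each step we keep a cell on which the hypothesis of \Prop{8} holds, possibly after applying the reflection $R_{ij}$. We discard the half adjacent to the lower endpoint, where \Prop{5} gives strict monotonicity toward the midpoint. The extremal point $x$ is then the intersection of the nested cells. The two ``moreover'' statements will follow from \Prop{7}.

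\emph{Step 1 (one bisection step).} Suppose the endpoint values $a=\a_i$, $b=\a_j$ and the mean $s$ satisfy $a\le b\le s$ and $a<s$.
\begin{itemize}
\item Left half. For $\psi_i^*h$, the identities \Eq{siai} and \Eq{aijsi} give $\a_i<s_i\le \a_{ij}$. Hence \Prop{5} applies to $\psi_i^*h$, and $h$ is strictly increasing on $\overline{p_ip_{ij}}$.
\item Right half. For $\psi_j^*h$, the identities \Eq{sjaj} and \Eq{aijsj} give $\a_j\le s_j$ and $\a_{ij}\le s_j$.
\item Strictness on the right half. Equality in both would force $\a_i=\a_j$ and $\a_j=s$, so $\a_i=s$, which is excluded. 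So at least one of the endpoint values of $\psi_j^*h$ is strictly below $s_j$.
\end{itemize}
Thus $\psi_j^*h$ again satisfies the hypothesis of \Prop{8}. The roles of the endpoints may be swapped, i.e.\ after composing with $R_{ij}$, according to whether $\a_{ij}\le\a_j$ or $\a_j<\a_{ij}$. If $\a_i=\a_j$, then \Cor{6}(i) settles everything directly with $x=p_{ij}$.

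\emph{Step 2 (iteration and monotonicity).} Iterating Step 1 produces nested subsegments $I_0\supset I_1\supset\cdots$ with $|I_n|=2^{-n}|p_i-p_j|$. If at some stage the two endpoint values coincide, \Cor{6} stops the iteration at the midpoint. Otherwise $\bigcap_n I_n=\{x\}$. Each discarded half is adjacent to the lower endpoint of its parent, and $h$ is strictly monotone on it, increasing toward the parent's midpoint and hence toward $x$.
\begin{itemize}
\item All discarded pieces lying to the left of $x$ are thus increasing in the direction of $x$. They are consecutive closed intervals glued at common endpoints, so continuity gives that $h$ is strictly increasing on $\overline{p_ix}$.
\item Symmetrically, $h$ is strictly decreasing on $\overline{xp_j}$.
\item Uniqueness of $x$ is immediate, since $x$ is then the unique maximizer of $h$ on the edge.
\item Already at the first step the left half is discarded, so $x\in\overline{p_{ij}p_j}$.
\end{itemize}

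\emph{Step 3 (excluding endpoints).} This is the delicate part, because the bisection alone could in principle always discard the same side.
\begin{itemize}
\item Suppose $\a_j<s$ but $x=p_j$. Then $h$ is nondecreasing on $\overline{p_ip_j}$, and \Prop{7} yields $s\le\a_j$, a contradiction.
\item Suppose $\a_i<\a_j$ but $x=p_{ij}$. Then $\psi_j^*h$ is nonincreasing from $p_{ij}$ to $p_j$. Applying \Prop{7} to $\psi_j^*h\circ R_{ij}$ gives $s_j\le\a_{ij}$. Combined with $\a_{ij}\le s_j$, this forces $\a_{ij}=s_j$, i.e.\ $\a_i=\a_j$ by \Eq{aijsj}, again a contradiction.
\end{itemize}

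I expect the main obstacle to be the bookkeeping in Step 2. One must check that the orientation of monotonicity on every discarded piece really points toward $x$, including after reflections. One must also check that the terminating case, where \Cor{6} applies inside the recursion, fits consistently into the same picture.
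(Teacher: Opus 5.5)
Your proposal is correct and follows essentially the same route as the paper. Both arguments use a one-step bisection via \Eq{siai}--\Eq{aijsj} and \Prop{5}, with $R_{ij}$ used to reorient the retained half, and iterate this to locate $x$; both then apply \Prop{7} to $h$ and to $(\psi_j^*h)\circ R_{ij}$ to exclude $x=p_j$ and $x=p_{ij}$. The only cosmetic difference is that you dispatch the coincident-endpoint case separately via \Cor{6}, whereas the paper's iteration handles it uniformly through the ``or'' condition \Eq{asas}.
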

\begin{proof}
First, we give the following observation. Suppose $\a_i\le\a_j\le s$ and $\a_i<s$.
Let $p_{jij}$ denote the middle point of $\overline{p_j p_{ij}}$. 
From \Eq{sjaj} and \Eq{aijsj}, 
\begin{equation}\label{eq:asas}
\text{($\a_j<s_j$ and $\a_{ij}\le s_j$) or ($\a_j\le s_j$ and $\a_{ij}< s_j$)}
\end{equation}
holds.
From \Eq{siai} and \Eq{aijsi}, $\a_i<s_i\le\a_{ij}$ holds. From \Prop{5}, $\psi_i^* h$ is strictly increasing on $\overline{p_i p_j}$, that is, $h$ is strictly increasing on $\overline{p_i p_{ij}}$.

Then, there are two possibilities:
\begin{itemize}
\item If $\a_{ij}\le \a_j$, then $\a_{ij}\le\a_j\le s_j$ and $\a_{ij}<s_j$ from \Eq{asas}. Since the assumption of the above observation is satisfied for the function $\psi_j^* h$, we obtain that $\psi_j^* h$ is strictly increasing on $\overline{p_i p_{ij}}$, that is, $h$ is strictly increasing on $\overline{p_{ij} p_{jij}}$.
\item If $\a_{ij}>\a_j$, then $\a_j<\a_{ij}\le s_j$. Therefore, the assumption of the above observation is satisfied for the function $(\psi_j^* h)\circ R_{ij}$. Then $h$ is strictly decreasing on $\overline{p_{jij}p_j}$.
\end{itemize}
Repeating this argument, we can show that there exists a unique $x\in \overline{p_{ij}p_j}$ such that $h$ is strictly increasing on $\overline{p_i x}$ and strictly decreasing on $\overline{x p_j}$.
If $x=p_j$, then $h$ is strictly increasing on $\overline{p_i p_j}$. From \Prop{7}, $s\le\a_j$. Therefore, $x\ne p_j$ if $\a_j<s$.
If $\a_i<\a_j$, \Eq{aijsj} implies $\a_{ij}<s_j$. 
Assume $x=p_{ij}$. Then $(\psi_j^*h)\circ R_{ij}$ is strictly increasing on $\overline{p_ip_j}$. From \Prop{7}, $s_j\le\a_{ij}$, which is a contradiction. Therefore, $x\ne p_{ij}$.
\end{proof}
\begin{cor}\label{cor:9}
If $s<\a_i<\a_j$ then there exists a unique $x\in\overline{p_i p_{ij}}\setminus\{p_i,p_{ij}\}$ such that $h$ is strictly decreasing on $\overline{p_i x}$ and strictly increasing on $\overline{x p_j}$.
\end{cor}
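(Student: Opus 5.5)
Corollary~\ref{cor:9} is the mirror image of \Prop{8}, and we obtain it by applying \Prop{8} to the harmonic function $\tilde h:=-h\circ R_{ij}\in\cH$. The reflection $R_{ij}$ swaps $p_i$ and $p_j$ and fixes $p_{ij}$. It also permutes the remaining vertices among themselves, since $p_k$ with $k\ne i,j$ lies on $H_{ij}$ and is therefore fixed. Hence the values of $\tilde h$ are $\tilde\a_i=-\a_j$ and $\tilde\a_j=-\a_i$, and its vertex average is $\tilde s=-s$.

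The hypothesis $s<\a_i<\a_j$ becomes $-\a_j<-\a_i<-s$, that is, $\tilde\a_i<\tilde\a_j<\tilde s$. This gives every assumption of \Prop{8}, in strict form:
\begin{itemize}
\item $\tilde\a_i\le\tilde\a_j\le\tilde s$ and $\tilde\a_i<\tilde s$;
\item $\tilde\a_j<\tilde s$;
\item $\tilde\a_i<\tilde\a_j$.
\end{itemize}
Therefore there is a unique $\tilde x\in\overline{p_{ij}p_j}$ with $\tilde x\ne p_j$ and $\tilde x\ne p_{ij}$ such that $\tilde h$ is strictly increasing on $\overline{p_i\tilde x}$ and strictly decreasing on $\overline{\tilde x p_j}$.

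Now set $x:=R_{ij}(\tilde x)$. Since $R_{ij}(y)=p_i+p_j-y$ on the edge, $R_{ij}$ maps $\overline{p_{ij}p_j}$ onto $\overline{p_ip_{ij}}$ and reverses orientation. Consequently $x\in\overline{p_ip_{ij}}\setminus\{p_i,p_{ij}\}$. Moreover, $h=-\tilde h\circ R_{ij}$ on the edge, and orientation reversal together with the sign change preserves the direction of monotonicity. Thus $h$ is strictly decreasing on $\overline{p_ix}=R_{ij}(\overline{\tilde xp_j})$ and strictly increasing on $\overline{xp_j}=R_{ij}(\overline{p_i\tilde x})$. Uniqueness of $x$ follows from uniqueness of $\tilde x$, because the correspondence $x\leftrightarrow\tilde x$ is a bijection.

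The only point that needs care is the orientation bookkeeping, namely checking that reflection and negation together turn ``increasing then decreasing'' into ``decreasing then increasing'' on the correct subsegments. This is elementary.
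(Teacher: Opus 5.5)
Your proof is correct and is exactly the paper's argument: the paper proves the corollary in one line by applying \Prop{8} to $-h\circ R_{ij}$. You have simply written out what the paper leaves implicit, namely the transformed vertex values, the strict hypotheses that exclude the endpoints, and the orientation bookkeeping under $R_{ij}$ and negation.
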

\begin{proof}
Apply \Prop{8} to the function $-h\circ R_{ij}$.
\end{proof}
\begin{thm}\label{th:10}
Suppose that $s\ne \a_i$ or $s\ne \a_j$.
Then, $\a_i=\a_j$ if and only if $h$ attains its maximum or minimum on $\overline{p_i p_j}$ at $p_{ij}$.
\end{thm}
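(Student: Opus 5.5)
The plan is to prove the two directions separately, using the case analysis already carried out in \Prop{5}, \Cor{6}, \Prop{8} and \Cor{9}. Throughout we assume $s\ne\a_i$ or $s\ne\a_j$.

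For the ``only if'' direction, suppose $\a_i=\a_j$. Then $s\ne\a_i=\a_j$, so either $\a_i=\a_j<s$ or $s<\a_i=\a_j$. In the first case \Cor{6}~(i) gives that $h$ attains its maximum on $\overline{p_ip_j}$ at $p_{ij}$. In the second case \Cor{6}~(ii) gives that it attains its minimum there.

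For the ``if'' direction we argue by contraposition. Suppose $\a_i\ne\a_j$; by the symmetry $i\leftrightarrow j$ we may assume $\a_i<\a_j$. We must show that $p_{ij}$ is neither a maximum point nor a minimum point of $h$ on $\overline{p_ip_j}$. We split according to the position of $s$.

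\emph{Case $\a_i\le s\le\a_j$.} By \Prop{5}, $h$ is strictly increasing on $\overline{p_ip_j}$. Since $p_{ij}$ is an interior point, it is neither a maximum nor a minimum.

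\emph{Case $\a_i<\a_j<s$.} \Prop{8} applies, since $\a_i\le\a_j\le s$ and $\a_i<s$. It gives a unique turning point $x\in\overline{p_{ij}p_j}$ with $x\ne p_{ij}$ (because $\a_i<\a_j$) and $x\ne p_j$. Hence $h$ is strictly increasing on $\overline{p_ix}$, a segment containing $p_{ij}$ in its interior. Therefore $h(p_{ij})<h(x)$, so $p_{ij}$ is not a maximum, and $h(p_i)<h(p_{ij})$, so $p_{ij}$ is not a minimum.

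\emph{Case $s<\a_i<\a_j$.} \Cor{9} gives a turning point $x$ strictly inside $\overline{p_ip_{ij}}$, with $h$ strictly decreasing on $\overline{p_ix}$ and strictly increasing on $\overline{xp_j}$. Since $p_{ij}$ lies in the interior of $\overline{xp_j}$, we get $h(x)<h(p_{ij})<h(p_j)$. So $p_{ij}$ is neither extremal.

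These three cases cover every possibility when $\a_i<\a_j$, which completes the contraposition.

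The only step needing care is the strictness in the ``if'' direction, namely that the turning point is not $p_{ij}$ itself. This is exactly the clause $x\ne p_{ij}$ in \Prop{8}, and the corresponding exclusion of $p_{ij}$ in \Cor{9}. With those results assumed, the proof reduces to bookkeeping over the cases.
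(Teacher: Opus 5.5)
Your proof is correct and follows the paper's argument. The ``only if'' part uses \Cor{6}, and the ``if'' part is proved by contraposition with the same three-case split on the position of $s$, via \Prop{5}, \Prop{8} (including its clause $x\ne p_{ij}$) and \Cor{9}; the only cosmetic difference is that you read off non-minimality directly from strict monotonicity on each side of $p_{ij}$, whereas the paper excludes the minimum by applying the maximum argument to $-h$.
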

\begin{proof}
The only if part follows from \Cor{6}.
We prove the if part.
Suppose $\a_i\ne \a_j$. By exchanging $i$ with $j$ if necessary, we may assume $\a_i<\a_j$. Then, in none of the cases $s<\a_i<\a_j$, $\a_i\le s\le \a_j$, and $\a_i<\a_j<s$ does the function $h$ attain its maximum at $p_{ij}$ from \Cor{9}, \Prop{5}, and \Prop{8}.
By considering $-h$ in place of $h$, $h$ does not also attain its minimum at $p_{ij}$.
\end{proof}
We refer the reader to \cite{DSV99,QT19} for related results concerning this subsection in the case $N=2$.
\subsection{The locations of extremal points of a harmonic function along edges}
From \Eq{aij} and \Eq{si}, $\a_{ij}$, $s_i$, and $s_j$ are determined by only $\a_i$, $\a_j$, and $s$.
Repeating this argument, we can see that the values of $h$ on $\{m/2^n\mid n\in\N,\ m=0,1,\dots,2^n\}\subset[0,1]\simeq\overline{p_ip_j}$ are determined by $\a_i$, $\a_j$, and $s$.
Since $h$ is continuous, the values of $h$ on $\overline{p_ip_j}$ are determined by $\a_i$, $\a_j$, and $s$.

Suppose $\a_i\ne\a_j$. From \Prop{5}, \Prop{8}, and \Cor{9}, there exists a unique point attaining the maximum of $h$ on $\overline{p_ip_j}$. Such a point will be denoted by $M(s;\a_i,\a_j)\in[0,1]\simeq\overline{p_ip_j}$. We set $M(s):=M(s;-1,0)$. Then, it is easy to see that 
\begin{equation}\label{eq:M}
M(s;\a_i,\a_j)=\begin{cases}
\displaystyle M\left(\frac{s-\a_j}{\a_j-\a_i}\right) &\text{if $\a_i<\a_j$}\\
\displaystyle 1-M\left(\frac{s-\a_i}{\a_i-\a_j}\right) &\text{if $\a_i>\a_j$}.
\end{cases}
\end{equation}
From \Prop{5} and \Cor{9}, $M(s)=1$ for $s\le 0$.
In what follows, we assume $s>0$. Then $M(s)\in(1/2,1)$ by \Prop{8}. In particular, $h$ attains its maximum $M(s)$ on $[1/2,1]\simeq\overline{p_{ij}p_j}$.
\begin{prop}\label{prop:fe}
$M(s)$ satisfies the following functional equation:
\[
M(s)=\begin{cases}
\displaystyle\frac12+\frac12 M\left(\frac{(N+1)s}{1-(N+1)s}\right) & \displaystyle\text{if $s\in\left(0,\frac1{N+1}\right)$}\smallskip\\
\displaystyle\frac34 & \displaystyle\text{if $s=\frac1{N+1}$}\smallskip\\
\displaystyle 1-\frac12 M\left(\frac{1}{(N+1)s-1}\right) & \displaystyle\text{if $s\in\left(\frac1{N+1},\infty\right)$}.
\end{cases}
\]
Moreover,
\[
\left\{
\begin{alignedat}{1}
s \in \left(0,\frac{1}{N+1}\right)
&\iff M(s) \in \left(\frac{3}{4},1\right), \\
s = \frac{1}{N+1}
&\iff  M(s) = \frac{3}{4}, \\
s \in \left(\frac{1}{N+1},\infty\right)
&\iff  M(s) \in \left(\frac{1}{2},\frac{3}{4}\right).
\end{alignedat}
\right.
\]
\end{prop}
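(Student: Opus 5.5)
The plan is to localize the maximum to the right half-edge and then apply \Eq{M} to the pulled-back function $\psi_j^*h$. Throughout, $\a_i=-1$, $\a_j=0$ and $s>0$. We already know that $M(s)\in(1/2,1)$, so the maximum of $h$ on $\overline{p_ip_j}$ is attained on $\overline{p_{ij}p_j}$. That half-edge is $\psi_j(\overline{p_ip_j})$, and the affine parametrization satisfies $\psi_j(\Phi(t))=\Phi(\tfrac12+\tfrac t2)$, since $\psi_j(p_i)=p_{ij}$ and $\psi_j(p_j)=p_j$. It follows that if the maximum of $\psi_j^*h$ on $\overline{p_ip_j}$ is attained at the parameter $t^*$, then $M(s)=\tfrac12+\tfrac12 t^*$. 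Strictly, this needs the maximum of $\psi_j^*h$ on its edge to be the maximum of $h$ on $\overline{p_ip_j}$, which holds because the global maximizer lies in $[1/2,1]$.

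Next I compute the three data that determine $\psi_j^*h$ on its edge, using \Eq{aij} and \Eq{si} with the roles of the indices chosen appropriately. The value at $p_i$ is $\a_{ij}=\frac{(N+1)s-1}{N+3}$. The value at $p_j$ is $\a_j=0$. The average is $s_j=\frac{(N+1)s}{N+3}$. The three cases of the statement correspond exactly to the sign of $\a_{ij}$, namely $s<\frac1{N+1}$, $s=\frac1{N+1}$ and $s>\frac1{N+1}$.

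\emph{Case $\a_{ij}<0$.} Here $\a_{ij}<\a_j$, so the first line of \Eq{M} gives
\[
t^*=M\Bigl(\frac{s_j-0}{0-\a_{ij}}\Bigr)=M\Bigl(\frac{(N+1)s}{1-(N+1)s}\Bigr),
\]
which yields the first formula.

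\emph{Case $\a_{ij}>0$.} Now $\a_{ij}>\a_j$, and the second line of \Eq{M} gives
\[
t^*=1-M\Bigl(\frac{s_j-\a_{ij}}{\a_{ij}}\Bigr)=1-M\Bigl(\frac1{(N+1)s-1}\Bigr).
\]
Therefore $M(s)=1-\tfrac12 M\bigl(\frac1{(N+1)s-1}\bigr)$.

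\emph{Case $\a_{ij}=0$.} The two endpoint values of $\psi_j^*h$ are both $0$, while its average is $s_j>0$. By \Cor{6}(i) the maximum is attained at the midpoint, so $t^*=1/2$ and $M(s)=3/4$.

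For the equivalences, observe that in the first and third cases the argument fed into $M$ is strictly positive. Hence that value of $M$ lies in $(1/2,1)$ by the fact recalled above (which comes from \Prop{8}). This places $M(s)$ in $(3/4,1)$ in the first case and in $(1/2,3/4)$ in the third case. Since the three conditions on $s$ partition $(0,\infty)$ and the three image sets are disjoint, the forward implications give the ``iff''s automatically.

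The main point requiring care is the bookkeeping. One must correctly identify which endpoint of the sub-edge corresponds to which vertex under $\psi_j$, and one must apply \Eq{M} with the pair $(\a_{ij},\a_j)$ in the right order; both affect whether the reflection $1-M$ appears. One must also confirm that the maximizer of $h$ really coincides with that of $\psi_j^*h$, which relies on uniqueness. That uniqueness holds because $\a_{ij}\ne\a_j$ in the first and third cases.
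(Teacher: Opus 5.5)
Your proposal is correct and follows the paper's proof essentially step for step. You restrict to the half-edge $\psi_j(\overline{p_ip_j})$, apply \Eq{M} to $\psi_j^*h$ with data $\a_{ij}=\frac{(N+1)s-1}{N+3}$, $\a_j=0$, $s_j=\frac{N+1}{N+3}s$ when $\a_{ij}\ne0$, and use \Cor{6} when $\a_{ij}=0$. Your extra remarks only make explicit what the paper leaves implicit: that the maximizer of $h$ lies in the right half and so coincides with that of $\psi_j^*h$, and that the three disjoint image ranges give the equivalences.
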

\begin{proof}
Consider a harmonic function $h$ such that $\a_i=-1$, $\a_j=0$, and $s>0$.
From \Eq{aij} and \Eq{si} with $i$ replaced by $j$, it holds that
\[
\a_{ij}=\frac1{N+3}\{(N+1)s-1\}\text{  and }s_j=\frac{N+1}{N+3}s.
\]
In the case $\a_{ij}<0\,({\iff} s<1/(N+1))$, we have
\begin{align*}
M(s)&=\frac12+\frac12M(s_j;\a_{ij},0)\\
&=\frac12+\frac12 M\left(-\frac{s_j}{\a_{ij}}\right)\qquad\text{(from \Eq{M})}\\
&=\frac12+\frac12 M\left(\frac{(N+1)s}{1-(N+1)s}\right)
\in \left(\frac34,1\right).
\end{align*}
In the case $\a_{ij}>0\,({\iff} s>1/(N+1))$, we have
\begin{align*}
M(s)&=\frac12+\frac12M(s_j;\a_{ij},0)\\
&=1-\frac12 M\left(\frac{s_j}{\a_{ij}}-1\right)\qquad\text{(from \Eq{M})}\\
&=1-\frac12 M\left(\frac{1}{(N+1)s-1}\right)
\in \left(\frac12,\frac34\right).
\end{align*}
In the case $\a_{ij}=0\,({\iff} s=1/(N+1))$, we have
$s_j=1/(N+3)>\a_{ij}=\a_j=0$.
From \Cor{6}, $\psi_j^* h$ attains its maximum on $\overline{p_ip_j}$ at $p_{ij}$. This means that $M(s)=3/4$.
\end{proof}
Let $\ph$ be a homeomorphism from $(0,1)$ to $(0,\infty)$ defined as 
\[
 \ph(t)=\frac1{N+1}\frac{1-t}{t}.
\]
Then, $\ph^{-1}(t)=1/\{(N+1)t+1\}$.
Let $L=M\circ \ph$. For $t\in(0,1/2)$,
\begin{align*}
L(t)&=1-\frac12M\left(\frac1{\frac{1-t}{t}-1}\right)\quad\left(\text{since $\ph(t)>\frac1{N+1}$}\right)\\
&=1-\frac12 M\left(\frac{t}{1-2t}\right)\\
&=1-\frac12 L\left(\frac{1}{(N+1)\frac{t}{1-2t}+1}\right)\\
&=1-\frac12 L\left(\frac{1-2t}{(N-1)t+1}\right).
\end{align*}
For $t\in(1/2,1)$,
\begin{align*}
L(t)&=\frac12+\frac12M\left(\frac{\frac{1-t}{t}}{1-\frac{1-t}{t}}\right)\quad\left(\text{since $\ph(t)<\frac1{N+1}$}\right)\\
&=\frac12+\frac12 M\left(\frac{1-t}{2t-1}\right)\\
&=\frac12+\frac12 L\left(\frac{1}{(N+1)\frac{1-t}{2t-1}+1}\right)\\
&=\frac12+\frac12 L\left(\frac{2t-1}{(N-1)(1-t)+1}\right).
\end{align*}
Also, $L(1/2)=3/4$. By letting $L(0)=1/2$ and $L(1)=1$, $L$ is a mapping from $[0,1]$ to $[1/2,1]$ and satisfies the following functional equation:
\begin{equation}\label{eq:L}
L(t)=\begin{cases}
\displaystyle 1-\frac12 L\left(\frac{1-2t}{(N-1)t+1}\right) & \text{if $0\le t\le \dfrac12$}\\[10pt]
\displaystyle \frac12+\frac12 L\left(\frac{2t-1}{(N-1)(1-t)+1}\right) & \text{if $\dfrac12\le t\le 1$}.
\end{cases}
\end{equation}
Define contractions
\[
f_1(y):=1-\frac12 y,\qquad 
f_2(y):=\frac12+\frac12 y \qquad \left(y\in \left[\frac12,1\right]\right),
\]
and piecewise monotone maps
\[
g_1(t):=\frac{1-2t}{(N-1)t+1}\ \left(0\le t\le \frac12\right),\quad
g_2(t):=\frac{2t-1}{(N-1)(1-t)+1}\ \left(\frac12\le t\le 1\right).
\]
\begin{lem}\label{lem:L}
$L$ is continuous and nondecreasing on $[0,1]$.
\end{lem}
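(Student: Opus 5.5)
The plan is to characterize $L$ as the unique bounded solution of the functional equation \Eq{L}, and to obtain that solution as the fixed point of a contraction on a closed set of continuous nondecreasing functions. This gives continuity and monotonicity simultaneously.

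First I would record the elementary properties of $g_1$ and $g_2$.
\begin{itemize}
\item $g_1$ is continuous and strictly decreasing on $[0,1/2]$, with $g_1(0)=1$ and $g_1(1/2)=0$. Its derivative is $-(N+1)/((N-1)t+1)^2<0$.
\item $g_2$ is continuous and strictly increasing on $[1/2,1]$, with $g_2(1/2)=0$ and $g_2(1)=1$.
\end{itemize}
So both maps send their domains onto $[0,1]$.

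For a bounded function $F\colon[0,1]\to[1/2,1]$, define $TF$ by the right-hand side of \Eq{L}:
\[
TF(t)=1-\tfrac12F(g_1(t))\ \text{ for } t\le\tfrac12,\qquad TF(t)=\tfrac12+\tfrac12F(g_2(t))\ \text{ for } t\ge\tfrac12 .
\]
At $t=1/2$ the two formulas give $1-\frac12F(0)$ and $\frac12+\frac12F(0)$. These agree exactly when $F(0)=1/2$.

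So I would work in the set
\[
\mathcal C=\{F\in C([0,1])\mid F \text{ nondecreasing},\ F(0)=\tfrac12,\ F(1)=1\}.
\]
This set is closed in the sup norm, because uniform limits preserve continuity, monotonicity and the endpoint values. Every $F\in\mathcal C$ takes values in $[1/2,1]$.

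Next I would check that $T$ maps $\mathcal C$ into itself.
\begin{itemize}
\item \emph{Well-definedness at $1/2$:} since $F(0)=1/2$, both branches give $TF(1/2)=3/4$.
\item \emph{Continuity:} each branch is continuous and they agree at $1/2$.
\item \emph{Monotonicity on $[0,1/2]$:} $F\circ g_1$ is nonincreasing, so $1-\frac12 F\circ g_1$ is nondecreasing.
\item \emph{Monotonicity on $[1/2,1]$:} $F\circ g_2$ is nondecreasing, hence so is $\frac12+\frac12F\circ g_2$.
\item \emph{Endpoints:} $TF(0)=1-\frac12F(1)=1/2$ and $TF(1)=\frac12+\frac12F(1)=1$.
\end{itemize}
Moreover, $\|TF-TG\|_\infty\le\frac12\|F-G\|_\infty$ for any bounded $F,G$, since $f_1$ and $f_2$ have Lipschitz constant $1/2$. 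By Banach's fixed point theorem, $T$ has a unique fixed point $F^*\in\mathcal C$.

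Finally I would identify $L$ with $F^*$. By construction $L$ is bounded, takes values in $[1/2,1]$, and satisfies \Eq{L} at every $t\in[0,1]$:
\begin{itemize}
\item on $(0,1/2)\cup(1/2,1)$ by the computation preceding the lemma;
\item at $t=1/2$ because $L(1/2)=3/4=1-\frac12L(0)$;
\item at the endpoints because of the conventions $L(0)=1/2$ and $L(1)=1$, which give $1-\frac12L(1)=1/2$ and $\frac12+\frac12L(1)=1$.
\end{itemize}
Hence $TL=L$, where $T$ is now applied as an operator on bounded functions. Then $\|L-F^*\|_\infty=\|TL-TF^*\|_\infty\le\frac12\|L-F^*\|_\infty$, so $L=F^*\in\mathcal C$.

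The main point needing care is this last step. One must confirm that \Eq{L} genuinely holds at every point, including the boundary and junction points where $L$ was defined by convention rather than through $M$. Otherwise the uniqueness argument among bounded functions does not apply. Given those checks, the contraction argument is routine.
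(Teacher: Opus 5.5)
Your argument is correct. It uses the same contraction mechanism as the paper but organizes it differently.

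The paper gets continuity of $L$ from Hata's general theorem on de Rham-type functional equations, after checking the compatibility condition $1-\frac12L(0)=\frac12+\frac12L(0)$. Only then does it treat $L$ as the fixed point of the $\frac12$-contraction $\Xi$ on continuous functions, and it deduces monotonicity from the monotonicity of the iterates $\Xi^nF_0$.

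You run the contraction on bounded functions instead. Uniqueness of the bounded solution of \Eq{L} then identifies $L$ with the fixed point in your closed invariant class $\mathcal C$. This gives continuity and monotonicity at once, with no external theorem. The only input is that \Eq{L} holds at every point, including $t=0,\frac12,1$, and you check exactly this.

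Your requirement $F(0)=\frac12$, $F(1)=1$ in $\mathcal C$ is also what makes the two branches agree at $t=\frac12$. The paper's seed $F_0(t)=t$ fails this: the two branches of $\Xi F_0$ give $f_1(0)=1$ and $f_2(0)=\frac12$ at $t=\frac12$. So that iteration should really be started from an element of your $\mathcal C$, such as $(1+t)/2$.

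What the paper's route buys is Hata's representation $L=\chi\circ v_G$, which it uses immediately afterwards for \Prop{strict_increasing}. In your framework this also follows directly: iterate $L(t)=f_{\om_0(t)}\bigl(L(G(t))\bigr)$ and use that $f_1,f_2$ are $\frac12$-contractions and $L$ is bounded.
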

\begin{proof}
\Eq{L} is a special case of the de Rham type functional equation \cite[(6.7)]{Ha85} with $m=2$.
At the branching point $t=1/2$ we have $g_1(1/2)=g_2(1/2)=0$, and
the compatibility condition \cite[(6.8)]{Ha85} reduces to
\[
1-\frac12 L(0)=\frac12+\frac12 L(0),
\]
which is true since $L(0)=1/2$.
Hence \cite[Theorem~6.5]{Ha85} applies and proves that $L$ is continuous.
Moreover, $L$ is a fixed point of the map
\[
\Xi\colon C([0,1])\ni F\mapsto \Xi F=\begin{cases}
f_1\circ F\circ g_1&\text{on }[0,1/2]\\
f_2\circ F\circ g_2&\text{on }[1/2,1]\end{cases}
\in C([0,1]).
\]
Since $\Xi$ is contractive for the uniform norm with contraction ratio $1/2$, $L$ is the uniform limit of $\{\Xi^n F_0\}_{n=1}^\infty$ with $F_0(t):=t$. Since a direct computation shows that each $\Xi^n F_0$ is nondecreasing by the mathematical induction, $L$ is also nondecreasing.
\end{proof}
We prove that $L$ is in fact strictly increasing.
By the proof of \cite[Theorem~6.5]{Ha85}, the solution admits a representation $L(t)=\chi(v_G(t))$, where $v_G\colon[0,1]\to \{1,2\}^{\Z_+}$ and $\chi\colon\{1,2\}^{\Z_+}\to[1/2,1]$ are defined as follows.
$v_G$ is the itinerary map of the associated piecewise monotone interval map
$G\colon [0,1]\to[0,1]$ defined as
\[
G(t)=
\begin{cases}
g_1(t) & \text{for $t\in[0,1/2]$}\\
g_2(t) & \text{for $t\in(1/2,1]$.}
\end{cases}
\]
That is, $v_G(t)=(\om_0(t),\om_1(t),\om_2(t),\ldots)$, where, for each $n\ge0$,
\[
\om_n(t):=
\begin{cases}
1 & \text{if $G^{n}(t)\in[0,1/2]$}\\
2 & \text{if $G^{n}(t)\in(1/2,1]$.}
\end{cases}
\]
Here, $G^{n}$ denotes the $n$ times iteration of $G$ and $G^0$ denotes the identity map.
$\chi$ is the coding map of the iterated function system
$\{f_1,f_2\}$, defined as
\[
\chi(\om_0,\om_1,\ldots)
:=\lim_{n\to\infty}
f_{\om_0}\circ f_{\om_1}\circ\cdots\circ f_{\om_{n-1}}(1).
\]
The coding map $\chi$ is well defined and continuous by the general theory of iterated function systems (see, e.g., \cite{MT88}).
It also holds that $\chi(\omega)=f_{\omega_0}\bigl(\chi(\sigma\omega)\bigr)$, where 
\[
\sg\colon \{1,2\}^{\Z_+}\ni \om=(\om_0,\om_1,\ldots)\mapsto(\om_1,\om_2,\ldots)\in \{1,2\}^{\Z_+}
\]
is a shift operator.

We consider inverse branches
\[
g_1^{-1}(s)=\frac{-s+1}{(N-1)s+2}\in\Bigl[0,\frac12\Bigr],\qquad
g_2^{-1}(s)=\frac{Ns+1}{(N-1)s+2}\in\Bigl[\frac12,1\Bigr].
\]
For $\om=(\om_0,\om_1,\ldots)\in\{1,2\}^{\Z_+}$ and $n\ge0$ set
\[
I_{[\om]_n}=g_{\om_{n-1}}^{-1}(g_{\om_{n-2}}^{-1}(\cdots (g_{\om_0}^{-1}([0,1]))\cdots)).
\]
\begin{lem}\label{lem:I}
For any $\om\in\{1,2\}^{\Z_+}$, the diameter of $I_{[\om]_n}$ converges to $0$ as $n\to\infty$.
\end{lem}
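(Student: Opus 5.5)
The plan is to prove a uniform contraction estimate for \emph{pairs} of inverse branches. A single branch $g_k^{-1}$ need not be contracting, since its derivative at $s=0$ equals $(N+1)/4>1$ when $N\ge4$. Two consecutive branches, however, contract uniformly. Since $I_{[\om]_n}$ is the image of $[0,1]$ under the composition $g_{\om_{n-1}}^{-1}\circ\cdots\circ g_{\om_0}^{-1}$ of monotone $C^1$ maps, the mean value theorem gives
\[
\diam I_{[\om]_n}\le \sup_{s\in[0,1]}\bigl|(g_{\om_{n-1}}^{-1}\circ\cdots\circ g_{\om_0}^{-1})'(s)\bigr|,
\]
and the chain rule reduces this to bounding products of derivatives along the orbit.

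\emph{Step 1 (one-step derivatives).} Differentiating the explicit formulas for the inverse branches gives, for both $k=1,2$,
\[
\bigl|(g_k^{-1})'(s)\bigr|=\frac{N+1}{((N-1)s+2)^2}=:\Delta(s),\qquad s\in[0,1].
\]
In particular $\Delta\le (N+1)/4$ on $[0,1]$.

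\emph{Step 2 (two-step derivatives).} Fix $a\in\{1,2\}$ and $s\in[0,1]$, and put $t=g_a^{-1}(s)$. A direct computation gives the following values of $(N-1)t+2$:
\begin{itemize}
\item if $a=1$, then $(N-1)t+2=\dfrac{(N-1)s+N+3}{(N-1)s+2}$;
\item if $a=2$, then $(N-1)t+2=\dfrac{(N-1)(N+2)s+N+3}{(N-1)s+2}$.
\end{itemize}
Hence, in both cases and for any second index $b\in\{1,2\}$,
\[
\bigl|(g_b^{-1}\circ g_a^{-1})'(s)\bigr|=\Delta(t)\,\Delta(s)=\frac{(N+1)^2}{\bigl((N-1)t+2\bigr)^2\bigl((N-1)s+2\bigr)^2}\le\frac{(N+1)^2}{(N+3)^2}.
\]
The inequality holds because the product $\bigl((N-1)t+2\bigr)\bigl((N-1)s+2\bigr)$ equals either $(N-1)s+N+3$ or $(N-1)(N+2)s+N+3$, and both are at least $N+3$ for $s\ge0$.

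\emph{Step 3 (conclusion).} Group the composition defining $I_{[\om]_n}$ into $\lfloor n/2\rfloor$ consecutive pairs, plus possibly one leftover single map, and apply the chain rule. This yields
\[
\diam I_{[\om]_n}\le \frac{N+1}{4}\left(\frac{N+1}{N+3}\right)^{2\lfloor n/2\rfloor}.
\]
The right-hand side tends to $0$ as $n\to\infty$, uniformly in $\om$.

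The only real obstacle is that the individual branches fail to be contractions near $s=0$. Step 2 overcomes it: a point near $0$ in the range of $g_b^{-1}$ can only come from an input where the previous branch is strongly contracting. Concretely, $t=g_1^{-1}(s)$ is near $0$ only when $s$ is near $1$, where $\Delta(s)\approx 1/(N+1)$. The algebraic identity in Step 2 makes this compensation exact.
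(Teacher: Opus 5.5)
Your argument is correct and is essentially the paper's proof: the paper likewise notes that each $g_k^{-1}$ is Lipschitz and that every two-fold composition $g_k^{-1}\circ g_l^{-1}$ has derivative of absolute value $(N+1)^2/\{N+3+(N-1)s\}^2$ or $(N+1)^2/\{N+3+(N^2+N-2)s\}^2$, hence bounded by $((N+1)/(N+3))^2$. One small slip: the factor for the leftover single map should be $\max\{1,(N+1)/4\}$ rather than $(N+1)/4$, because for $N=2$ and $\om_0=\om_1=1$ one has $I_{[\om]_2}=[1/5,1/2]$, whose length $3/10$ exceeds your displayed bound $27/100$; this does not affect the conclusion.
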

\begin{proof}
$g_1^{-1}$ and $g_2^{-1}$ are both Lipschitz continuous.
From the direct calculation, for $k=1,2$,
\begin{align*}
\left|\frac{\dd}{\dd s}(g_k^{-1}\circ g_1^{-1})\right|&=\frac{(N+1)^2}{\{N+3+(N-1)s\}^2}\le \left(\frac{N+1}{N+3}\right)^2<1,\\
\left|\frac{\dd}{\dd s}(g_k^{-1}\circ g_2^{-1})\right|&=\frac{(N+1)^2}{\{N+3+(N^2+N-2)s\}^2}\le \left(\frac{N+1}{N+3}\right)^2<1.
\end{align*}
Therefore, the diameter of $I_{[\om]_n}$ converges to $0$ exponentially fast.
\end{proof}
Let
\[
B=\left\{t\in[0,1]\mathrel{}\middle|\mathrel{} G^{n}(t)\ne\frac12\ \text{for all }n\ge0\right\}.
\]
The set $[0,1]\setminus B$ is countable. In particular, $B$ is dense in $[0,1]$.
\begin{lem}\label{lem:injective_itinerary}
If $s,t\in B$ and $v_G(s)=v_G(t)$, then $s=t$.
\end{lem}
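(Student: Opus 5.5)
We reduce to the shrinking of the cylinder intervals. Suppose $s,t\in B$ have the same itinerary $\om:=v_G(s)=v_G(t)$. We claim that both $s$ and $t$ lie in $I_{[\om]_n}$ for every $n\ge1$. Then \Lem{I} gives $|s-t|\le\diam I_{[\om]_n}\to0$, hence $s=t$.

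The claim is checked by induction. Write $t_k:=G^k(t)$, so $t_0=t$ and $t_{k+1}=G(t_k)$. Since $t\in B$, no $t_k$ equals $1/2$. Hence $t_k$ lies in the open half $[0,1/2)$ when $\om_k=1$, and in $(1/2,1]$ when $\om_k=2$. On each of these halves $G$ agrees with the corresponding branch $g_{\om_k}$, and $g_{\om_k}$ is a bijection of its closed half onto $[0,1]$. Indeed, $g_1$ decreases from $1$ to $0$ on $[0,1/2]$, and $g_2$ increases from $0$ to $1$ on $[1/2,1]$. Therefore
\[
t_k=g_{\om_k}^{-1}(t_{k+1}),\qquad k\ge0.
\]
Iterating from $k=n-1$ down to $k=0$ gives
\[
t=g_{\om_0}^{-1}\circ g_{\om_1}^{-1}\circ\cdots\circ g_{\om_{n-1}}^{-1}(t_n),\qquad t_n\in[0,1].
\]

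Here one must be careful with the order of composition in the definition of $I_{[\om]_n}$. As written in the paper, $g_{\om_0}^{-1}$ is applied first and $g_{\om_{n-1}}^{-1}$ last. The natural cylinder set of points with itinerary beginning $\om_0\cdots\om_{n-1}$ is instead
\[
J_n:=g_{\om_0}^{-1}\circ\cdots\circ g_{\om_{n-1}}^{-1}([0,1]).
\]
This nuisance does not matter. The proof of \Lem{I} bounds the derivative of every two-fold composition $g_k^{-1}\circ g_l^{-1}$ by $((N+1)/(N+3))^2$, uniformly in $k,l$, and each single inverse branch is Lipschitz. So any $n$-fold composition of inverse branches, in either order, has Lipschitz constant at most $C((N+1)/(N+3))^{n-1}$ for some constant $C$. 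Consequently $\diam J_n\to0$ as $n\to\infty$, by the same argument as in \Lem{I}.

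Now $t$ and $s$ share the itinerary $\om$, so the representation above holds for both of them with the same branches. Thus $s,t\in J_n$ for every $n$, and
\[
|s-t|\le\diam J_n\to0,
\]
which gives $s=t$.

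The only step needing care is the equality $t_k=g_{\om_k}^{-1}(t_{k+1})$. It uses three facts: the inverse branches are the true inverses of $g_1,g_2$ on their closed domains; $G$ agrees with $g_1$ on $[0,1/2]$ and with $g_2$ on $(1/2,1]$; and the boundary point $1/2$ is never visited because $t\in B$. The hypothesis $t\in B$ is in fact only used to make the itinerary symbol determine the branch unambiguously. Since $g_1(1/2)=g_2(1/2)=0$, a point visiting $1/2$ could otherwise be coded by either branch at that step.
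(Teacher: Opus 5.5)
Your proof is correct and is essentially the paper's argument: both points lie in every cylinder interval determined by the common itinerary, and \Lem{I} forces these intervals to shrink to a point. Your observation about the composition order is right. The displayed definition of $I_{[\om]_n}$ lists the inverse branches in reverse, the relevant set is $g_{\om_0}^{-1}\circ\cdots\circ g_{\om_{n-1}}^{-1}([0,1])$, and the pairwise derivative bound in the proof of \Lem{I} handles either order. Incidentally, and contrary to your last paragraph, the hypothesis $s,t\in B$ is not actually needed here: with the paper's conventions ($1/2$ is coded by $1$ and $G(1/2)=g_1(1/2)$), the identity $G^k(t)=g_{\om_k}^{-1}(G^{k+1}(t))$ holds for every $t\in[0,1]$.
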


\begin{proof}
For $\om=v_G(s)$, both $s$ and $t$ belong to $\bigcap_{n\in\Z_+}I_{[\om]_n}$. From \Lem{I}, $s=t$ holds.
\end{proof}

\begin{lem}\label{lem:B}
If two distinct elements $\om$ and $\hat\om$ of $\{1,2\}^{\Z_+}$ satisfy $\chi(\om)=\chi(\hat\om)$, then there exist $n\in\Z_+$ and $w\in\{1,2\}^n$ such that $\{\om,\hat\om\}=\{w112^\infty,w212^\infty\}$.
\end{lem}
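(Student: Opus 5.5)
We must show that the only coincidences of the coding map $\chi$ of $\{f_1,f_2\}$ come from the two dyadic-type codings of the same point. First record the geometry. $f_1(y)=1-\frac12y$ maps $[\frac12,1]$ onto $[\frac12,\frac34]$, and it is orientation-reversing. $f_2(y)=\frac12+\frac12y$ maps $[\frac12,1]$ onto $[\frac34,1]$, and it is orientation-preserving. Both are injective, and their images meet only at $\frac34=f_1(\frac12)=f_2(\frac12)$. Moreover $\chi$ takes values in $[\frac12,1]$, since that interval is invariant under both maps.

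The key step is the first index where the codes differ. Let $\om\ne\hat\om$ with $\chi(\om)=\chi(\hat\om)$, and let $n$ be the first index with $\om_n\ne\hat\om_n$. Put $w=\om_0\cdots\om_{n-1}\in\{1,2\}^n$. From $\chi(\om)=f_{\om_0}(\chi(\sg\om))$ and the injectivity of $f_1,f_2$, we may peel off $w$ and get $\chi(\sg^n\om)=\chi(\sg^n\hat\om)$. So we reduce to the case $n=0$, say $\om_0=1$ and $\hat\om_0=2$. Then $f_1(\chi(\sg\om))=f_2(\chi(\sg\hat\om))$ lies in $f_1([\frac12,1])\cap f_2([\frac12,1])=\{\frac34\}$. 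This forces $\chi(\sg\om)=\chi(\sg\hat\om)=\frac12$.

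It remains to show that $\chi(\eta)=\frac12$ holds only for $\eta=12^\infty$.
\begin{itemize}
\item Since $f_2$ has range $[\frac34,1]$, we need $\eta_0=1$.
\item Then $1-\frac12\chi(\sg\eta)=\frac12$ gives $\chi(\sg\eta)=1$.
\item $f_1$ has range $[\frac12,\frac34]$, which does not contain $1$. So $(\sg\eta)_0=2$, and $\frac12+\frac12\chi(\sg^2\eta)=1$ gives $\chi(\sg^2\eta)=1$ again.
\item Iterating the previous step, $\sg\eta=2^\infty$.
\end{itemize}
Conversely, $\chi(2^\infty)=1$ because $1$ is the fixed point of $f_2$, and hence $\chi(12^\infty)=\frac12$. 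Therefore $\sg\om=\sg\hat\om=12^\infty$, so $\om=w112^\infty$ and $\hat\om=w212^\infty$. The symmetric case $\om_0=2$, $\hat\om_0=1$ just swaps the roles of $\om$ and $\hat\om$.

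There is no real obstacle here. The only points needing care are that $\chi$ takes values in $[\frac12,1]$, which follows from invariance of that interval, and that the self-similarity identity $\chi(\om)=f_{\om_0}(\chi(\sg\om))$ together with the injectivity of $f_1$ and $f_2$ justifies stripping off the common prefix $w$.
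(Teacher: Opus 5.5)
Your proof is correct and follows essentially the same route as the paper. You strip the common prefix using the injectivity of $f_1,f_2$, force both tails after the first differing letter to have $\chi$-value $\frac12$, and then show that $\chi(\eta)=\frac12$ only for $\eta=12^\infty$. The one cosmetic difference is that you use $f_1([\frac12,1])\cap f_2([\frac12,1])=\{\frac34\}$ where the paper uses $a+b=1$ with $a,b\in[\frac12,1]$; these are equivalent.
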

\begin{proof}
We can express $\om=w k \om'$ and $\hat\om=w l \hat\om'$ for some $n\in\Z_+$, $w\in\{1,2\}^n$, $\{k,l\}=\{1,2\}$, and $\om',\hat\om'\in \{1,2\}^{\Z_+}$.
We may assume $(k,l)=(1,2)$.
The affine maps $f_1$ and $f_2$ are injective, hence $\chi(\om)=\chi(\hat\om)$ implies $\chi(1 \om')=\chi(2 \hat\om')$.
Writing $a=\chi(\om')$ and $b=\chi(\hat\om')$, this equality reads $f_1(a)=f_2(b)$, that is, $a+b=1$. Since $a,b\in[1/2,1]$, it holds that $a=b=1/2$.

Now observe that the equation $\chi(\eta)=1/2$ for $\eta\in \{1,2\}^{\Z_+}$ implies $\eta=12^\infty$.
Indeed, the equation $1/2=\chi(\eta)=f_{\eta_0}(\chi(\sigma\eta))$ forces $\eta_0=1$ and $\chi(\sigma\eta)=1$. Since $f_1^{-1}(\{1\})=\emptyset$ and $f_2^{-1}(\{1\})=\{1\}$, $\sigma\eta$ must be $2^\infty$.
This proves the claim.
\end{proof}

\begin{lem}\label{lem:boundary_H}
If $v_G(t)=w'12^\infty$ for some $m\in\N$ and $w'\in\{1,2\}^m$, then $G^{m-1}(t)=1/2$.
In particular, $t\notin B$.
\end{lem}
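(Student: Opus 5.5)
Write $v_G(t)=(\om_0,\om_1,\dots)$ with $(\om_0,\dots,\om_{m-1})=w'$, $\om_m=1$ and $\om_k=2$ for all $k\ge m+1$. The plan is to read this itinerary backwards. First show that $G^{m+1}(t)=1$, then that $G^m(t)=0$, and finally that $G^{m-1}(t)=1/2$. Since $m\ge1$, the point $G^{m-1}(t)$ is defined, and $t\notin B$ then follows immediately from the definition of $B$.

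\emph{Step 1: $r:=G^{m+1}(t)=1$.} The itinerary of $r$ is $2^\infty$, so $G^k(r)\in(1/2,1]$ and $G^{k+1}(r)=g_2(G^k(r))$ for all $k\ge0$. I will locate the fixed points of $g_2$. The equation $g_2(\tau)=\tau$ is equivalent to
\[
(N-1)\tau^2+(2-N)\tau-1=0,
\]
whose roots are $\tau=1$ and $\tau=-1/(N-1)$. Hence $g_2(\tau)-\tau$ has constant sign on $[1/2,1)$, and the value $g_2(1/2)=0<1/2$ shows that $g_2(\tau)<\tau$ there. Suppose $r<1$. Then the sequence $G^k(r)$ is strictly decreasing and stays in $(1/2,1)$, so it converges to some $\tau_*\in[1/2,1)$. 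By continuity of $g_2$ on $[1/2,1]$, the limit satisfies $g_2(\tau_*)=\tau_*$. This is impossible: $\tau_*=1/2$ gives $g_2(1/2)=0\ne1/2$, and $g_2$ has no fixed point in $(1/2,1)$. Hence $r=1$.

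\emph{Step 2: $u:=G^m(t)=0$.} Since $\om_m=1$, we have $u\in[0,1/2]$ and $g_1(u)=1$. The map $g_1$ is strictly decreasing on $[0,1/2]$ with $g_1(0)=1$, so $u=0$.

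\emph{Step 3: $G^{m-1}(t)=1/2$.} Put $s:=G^{m-1}(t)$, so that $G(s)=0$. If $s\in(1/2,1]$, then $g_2(s)=0$ forces $2s-1=0$, contradicting $s>1/2$. Hence $s\in[0,1/2]$, and $g_1(s)=0$ forces $1-2s=0$, that is, $s=1/2$.

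In particular, $G^{m-1}(t)=1/2$ with $m-1\ge0$, so $t\notin B$. The only step requiring care is Step 1, where one must exclude an orbit staying in $(1/2,1)$ forever. This is settled by the explicit fixed-point computation for $g_2$ together with the monotonicity of the orbit. Incidentally, the argument shows that the last letter of $w'$ must be $1$, since $G^{m-1}(t)=1/2\in[0,1/2]$.
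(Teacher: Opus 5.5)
Your proof is correct and follows the same backward reading of the itinerary as the paper, which writes $G^{m-1}(t)\in\bigcap_k g_i^{-1}(g_1^{-1}((g_2^{-1})^k([0,1])))=g_i^{-1}(g_1^{-1}(\{1\}))=g_i^{-1}(\{0\})=\{1/2\}$. The only difference is in identifying the point with itinerary $2^\infty$ as $1$. You do this by the fixed-point analysis of $g_2$ and monotone convergence of the forward orbit, whereas the paper uses the nested intervals $(g_2^{-1})^k([0,1])$ shrinking to $\{1\}$ (Lemma~\ref{lem:I}); your explicit exclusion of a final letter $2$ in $w'$ is a detail the paper leaves implicit.
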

\begin{proof}
Write $w'=w i$ with $w\in\{1,2\}^{m-1}$ and $i\in\{1,2\}$.
Since $G^{m-1}(t)$ has itinerary $i12^\infty$,
\[
G^{m-1}(t)\in \bigcap_{k\in\N}g_i^{-1}(g_1^{-1}((g_2^{-1})^k([0,1])))=g_i^{-1}(g_1^{-1}(\{1\}))=g_i^{-1}(\{0\})=\left\{\frac12\right\}.
\]
Therefore, $G^{m-1}(t)=1/2$.
\end{proof}

\begin{prop}\label{prop:strict_increasing}
$L$ is strictly increasing on $[0,1]$.
\end{prop}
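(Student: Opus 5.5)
We argue by contradiction, using the representation $L=\chi\circ v_G$ together with \Lem{L}, \Lem{injective_itinerary}, \Lem{B}, and \Lem{boundary_H}. Since $L$ is already known to be nondecreasing by \Lem{L}, it suffices to exclude intervals of constancy. So suppose $s<t$ in $[0,1]$ with $L(s)=L(t)$. Then $L$ is constant on $[s,t]$. Since $[0,1]\setminus B$ is countable, the interval $[s,t]$ contains uncountably many (in particular at least three) distinct points of $B$; fix three of them, $b_1<b_2<b_3$. All three have the same value under $L$, that is, $\chi(v_G(b_1))=\chi(v_G(b_2))=\chi(v_G(b_3))$.

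By \Lem{injective_itinerary}, the itineraries $v_G(b_1),v_G(b_2),v_G(b_3)$ are pairwise distinct. Applying \Lem{B} to each pair, every pair is of the form $\{w112^\infty,w212^\infty\}$ for some finite word $w$. In particular, $v_G(b_1)$ is of the form $w'12^\infty$ with $w'$ a nonempty word (of length $n+1\ge1$). Then \Lem{boundary_H} gives $b_1\notin B$, a contradiction. This already finishes the argument, and in fact two points of $B$ suffice, so there is no need for three.

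Because the argument is so short, the only subtle point is verifying that the hypotheses line up. First, the representation $L=\chi\circ v_G$ must hold at points of $B$. This is taken from the proof of \cite[Theorem~6.5]{Ha85}, where the itinerary convention at the branch point $1/2$ only matters off $B$. Second, the word $w'=wk$ in \Lem{B} has length $m=n+1\ge1$, so \Lem{boundary_H} is applicable. If one is uneasy about the validity of the representation at every point, note that it is only needed on the dense set $B$, where the itinerary is unambiguous; continuity of $L$ is not even required for the contradiction.

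Finally, strict monotonicity follows by combining the nondecreasing property from \Lem{L} with the impossibility of $L(s)=L(t)$ for $s<t$.
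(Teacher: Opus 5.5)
Your argument is correct and is essentially the paper's proof. The paper shows $L(s)<L(t)$ for $s<t$ in $B$ using Lemmas~\ref{lem:injective_itinerary}, \ref{lem:B}, and~\ref{lem:boundary_H}, and then extends to all of $[0,1]$ using the density of $B$; your contradiction on a flat interval $[s,t]$ containing two points of $B$ is the same argument, run the other way. As you observe, monotonicity from Lemma~\ref{lem:L} already suffices for the extension step, so continuity is not needed.
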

\begin{proof}
Suppose $s,t\in B$ satisfy $s<t$.
From \Lem{injective_itinerary}, $v_G(s)\neq v_G(t)$.
Moreover, Lemmas~\ref{lem:B} and \ref{lem:boundary_H} exclude the possibility
$\chi(v_G(s))=\chi(v_G(t))$.
Since $L=\chi\circ v_G$ is nondecreasing from \Lem{L}, this implies $L(s)<L(t)$.
Because $B$ is dense in $[0,1]$ and $L$ is continuous from \Lem{L}, the strict inequality extends to all $s<t$ in $[0,1]$.
\end{proof}
\Prop{strict_increasing} and the relation $M=L\circ \ph^{-1}$ imply the following.
\begin{prop}\label{prop:M}
The map $M\colon(0,\infty)\to(1/2,1)$ is a strictly decreasing and continuous surjection.
\end{prop}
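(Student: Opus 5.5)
The plan is to transfer everything about $L$ to $M$ through the homeomorphism $\ph$. Recall that $\ph(t)=\frac1{N+1}\frac{1-t}{t}$ is a homeomorphism from $(0,1)$ onto $(0,\infty)$. It is strictly decreasing, since $(1-t)/t=1/t-1$ decreases in $t$. Its inverse $\ph^{-1}(s)=1/\{(N+1)s+1\}$ is therefore also a strictly decreasing continuous bijection from $(0,\infty)$ onto $(0,1)$. By definition $L=M\circ\ph$ on $(0,1)$, so $M=L\circ\ph^{-1}$ on $(0,\infty)$, which is the relation quoted before the statement.

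For monotonicity and continuity, \Lem{L} gives that $L$ is continuous on $[0,1]$, and \Prop{strict_increasing} gives that it is strictly increasing there. Composing a strictly increasing continuous map with a strictly decreasing continuous map gives a strictly decreasing continuous map. Hence $M=L\circ\ph^{-1}$ is strictly decreasing and continuous on $(0,\infty)$.

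For the range, note that $\ph^{-1}((0,\infty))=(0,1)$. So $M((0,\infty))=L((0,1))$. Since $L$ is continuous and strictly increasing on $[0,1]$ with the boundary values $L(0)=1/2$ and $L(1)=1$ fixed in the derivation of \Eq{L}, the intermediate value theorem gives $L([0,1])=[1/2,1]$. Strict monotonicity then excludes the endpoint values on the open interval, so $L((0,1))=(1/2,1)$. Therefore $M$ maps $(0,\infty)$ onto $(1/2,1)$, and it is a surjection.

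The only point needing care is that the continuity of $L$ at the endpoints is genuinely about the extension $L(0)=1/2$, $L(1)=1$, which is not defined through $M$. This is harmless: \Lem{L} treats $L$ as the continuous fixed point of $\Xi$ on all of $[0,1]$ with exactly these boundary values, and \Eq{L} is consistent with them, since $L(0)=1-\frac12L(1)=1/2$ and $L(1)=\frac12+\frac12L(1)$. So the range computation is justified. Independently of the endpoints, \Prop{8} already gives $M(s)\in(1/2,1)$ for $s>0$, which confirms the codomain.
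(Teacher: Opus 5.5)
Your proposal is correct and takes the same route as the paper, which derives \Prop{M} in one line from \Lem{L}, \Prop{strict_increasing}, and the relation $M=L\circ\ph^{-1}$, where $\ph^{-1}$ is a decreasing homeomorphism of $(0,\infty)$ onto $(0,1)$. Your intermediate-value argument with the boundary values $L(0)=1/2$, $L(1)=1$ simply spells out the surjectivity that the paper leaves implicit.
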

\begin{rem}
For $N = 2$, \Prop{strict_increasing} is mentioned in \cite[Section~4, Remark~2]{DSV99}, where a proof of the strict monotonicity is omitted. Since this property is needed in the proof of \Thm{main1a} in the next section, we provide a proof here valid for arbitrary $N \ge 2$.
\end{rem}
\section{Proof of Theorem~\protect\ref{th:main1}}\label{sec:main1}
Let $w\in W_*$. We say that a harmonic function $h$ is \emph{symmetric} on $K_w$ if there exist distinct elements $i$ and $j$ of $S$ such that $h(\psi_w(p_i))=h(\psi_w(p_j))$.

The following theorem is a generalization of a part of \cite[Theorem~3.5]{BHS14} to general $N$.
\begin{thm}\label{th:main1a}
Let $h\in \cH$ and $w\in W_*$. Then,
\begin{equation}\label{eq:main1}
\inf_{w'\in W_*}\frac{\nu_h(K_{ww'})}{\nu(K_{ww'})}=0.
\end{equation}
\end{thm}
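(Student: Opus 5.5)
The plan is to produce cells inside $K_w$ on which the ratio $\nu_h/\nu$ can be computed asymptotically, and then to make the resulting limit as small as we like. If $h$ is constant on $K_w$ (all vertex values $h(\psi_w(p_k))$ coincide), then $\nu_h(K_w)=0$ by \Eq{cellenergy} and \Eq{main1} is trivial. So assume otherwise and set $u=\iota^{-1}(h)$.

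\textbf{Step 1: a computable limit.} For any $w'\in W_*$ and $i\in S$, \Lem{3} and the proof of \Prop{4} give
\[
\lim_{n\to\infty}\frac{\nu_h(K_{ww'i^n})}{\nu(K_{ww'i^n})}
=\frac{(d_i,A_{ww'}u)^2}{|{}^t\!A_{ww'}d_i|^2},
\]
and the left side is bounded below by the infimum in \Eq{main1}. It therefore suffices to find cells $ww'$ and indices $i$ for which this quotient is arbitrarily small. Writing $v=A_{ww'}u$ for the vertex values of $h$ on $K_{ww'}$, we have $(d_i,v)=(N+1)(s(v)-v_i)$, where $s(v)$ is the mean of the entries of $v$. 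Thus the numerator measures how far the vertex value at $p_i$ is from the cell average.

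\textbf{Step 2: approaching an interior maximum.} Choose an edge $\psi_w(\overline{p_ip_j})$ on which the normalized parameter of \Eq{M} is positive, so that by \Prop{8} or \Cor{9} (possibly after replacing $h$ by $-h$ or reflecting with $R_{ij}$) $h$ has a unique interior maximum point $x$ on that edge. I expect such an edge to exist whenever $h$ is nonconstant on $K_w$; otherwise one first passes to a subcell, and I would check this by a case analysis on the ordering of the vertex values. Take the nested cells $K_{ww'_n}$ along the edge whose right vertex (in the $j$-direction) converges to $x$ from the left and which do not contain $x$ in their interior edge. On each such cell, \Prop{M} together with \Eq{M} expresses the location of the maximum of $h$ on the subedge as $M(\sigma_n)$, where
\[
\sigma_n=\frac{s(v^{(n)})-v^{(n)}_j}{v^{(n)}_j-v^{(n)}_i},\qquad v^{(n)}=A_{ww'_n}u .
\]
The maximum on the sub-edge must occur at, or asymptotically at, its right end, forcing $M(\sigma_n)\to1$. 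By the strict monotonicity and continuity in \Prop{M} this gives $\sigma_n\to0$ (with $\sigma_n\le 0$ allowed, the case in which $M=1$ exactly). Hence $(d_j,v^{(n)})=o\bigl(|v^{(n)}_j-v^{(n)}_i|\bigr)$. Alternatively, it is enough to exhibit one cell with $\sigma=0$ exactly, which would give ratio $0$ in the limit directly; I would first try to obtain this by an intermediate-value argument in $\sigma_n$ across cells on the two sides of $x$.

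\textbf{Step 3: comparing with the denominator (main obstacle).} The numerator is now small relative to $|v_j-v_i|\le \sqrt2\,|Pv^{(n)}|$. It remains to show $|{}^t\!A_{ww'_n}d_j|\gtrsim |Pv^{(n)}|/|Pu|$ uniformly in $n$, so that the quotient is $o(1)$. Since $(d_j,A u)=({}^t\!A d_j,u)$, the crude bound goes the wrong way. I would try to use the explicit spectral structure from \Lem{eigen}: for cells along the edge, $A_{w'_n}$ is a product of $A_i$'s and $A_j$'s, which preserve the splitting into $\mathrm{span}\{\bfone,e_i,e_j\}$ and the complement where both act as $1/(N+3)$. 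This should let me compare $|{}^t\!A_{w'_n}d_j|$ and the growth of $|PA_{w'_n}v|$ in the relevant $2$-dimensional quotient, where both are controlled by the same $2\times2$ matrix products. If that comparison fails, the fallback is to secure an exact zero $\sigma=0$ as in Step 2, which makes the quotient vanish regardless of the denominator.
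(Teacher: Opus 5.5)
\paragraph{Verdict.} Your Step~1 is fine. The key claim of Step~2 is unjustified and false in general, and the fallback cannot close the gap.

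\paragraph{Why $\sigma_n\to0$ fails.} On every sub-edge cell lying to the left of the maximum point $x$, $h$ is strictly increasing (\Prop{8}). So $M(\sigma_n)=1$ \emph{exactly} and $\sigma_n\le0$. Since $M\equiv1$ on $(-\infty,0]$ and \Prop{M} only concerns $(0,\infty)$, nothing forces $\sigma_n\to0$.

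Here is a counterexample. Normalize a sub-edge cell to endpoint values $-1,0$ with parameter $\sigma$. Then \Eq{aij}--\Eq{si} show that its left and right children have parameters $-1/((N+1)\sigma+N+2)$ and $(N+1)\sigma/(1-(N+1)\sigma)$. Hence every sub-edge descendant of a cell with $\sigma\in[-1,0)$ has parameter $\sigma'$ with $|\sigma'|\ge\min\{|\sigma|,1/(N+2)\}$.

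Now let the maximum of $h$ on the edge sit at relative position $2/3=0.\overline{10}_2$; this is possible since $M$ maps onto $(1/2,1)$. By injectivity of $M$, all cells containing $x$ at even levels have the same parameter $s^*>0$. Every sub-edge cell left of $x$ descends from the left child of such a cell. So $|\sigma|\ge1/((N+1)s^*+N+2)>0$ for all of them, and $(d_j,v^{(n)})=o(|v^{(n)}_j-v^{(n)}_i|)$ fails.

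\paragraph{Why the fallback fails.} Cells form a discrete family, so there is no intermediate value argument. Moreover, for each $w'$ and $k$ the condition $(d_k,A_{ww'}u)=0$ defines a proper hyperplane of $u$'s, since ${}^t\!A_{ww'}d_k\ne0$. So for $u$ outside a countable union of hyperplanes, no cell has $\sigma=0$ at any vertex.

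\paragraph{What actually happens, and how to fix it.} In the example above, the Step~1 quotient along your cells does tend to $0$, but by the opposite mechanism. The parameter $\sigma$ stays bounded away from $0$, while $|PA_{ww'}u|$ becomes exponentially small compared with $|{}^t\!A_{ww'}d_j|$. In other words, your Step~3 inequality holds with an exponentially growing margin.

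Capturing this needs an idea not in the proposal. One option:
\begin{itemize}
\item write the Step~1 limit as $(\eta,y)^2/|{}^t\!A_w\eta|^2$, where $\eta={}^t\!A_{w'}d_j$ lies in the two-dimensional space $Y$ of \Sec{main3} (invariant under ${}^t\!A_i$ and ${}^t\!A_j$) and $y$ is the $Y$-component of $PA_wu$;
\item prove that the directions of ${}^t\!A_{w'}d_j$, $w'\in\{i,j\}^*$, accumulate at the annihilator of $y$ when $(v_i-s)(v_j-s)\ge0$.
\end{itemize}

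The paper instead realizes your fallback for a \emph{perturbed} function. Adding a small multiple of $\iota(A_w^{-1}e_l)$, $l\ne i,j$, changes only the mean $s$ seen from the edge. By continuity and strict monotonicity of $M$ (\Prop{M}), this can be tuned so that the maximum sits at an odd dyadic point. \Thm{10} then makes $h'$ exactly symmetric on a subcell, which gives an exact zero of $(d_j,\cdot)$ on a child cell. The symmetric computation gives ratio $O((N+1)^{-2n})$ for $h'$. Finally, $h-h'$ is absorbed using $\sqrt{\nu_h(A)}\le\sqrt{\nu_{h'}(A)}+\sqrt{\nu_{h-h'}(A)}$ and \Lem{simple}.
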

\begin{proof}
The proof is divided into two steps.
First we treat the symmetric case, and then reduce the general case to it by a perturbation argument.
\smallskip

\noindent
\textbf{Step 1.} We consider the case where $h$ is symmetric on $K_w$. Take distinct elements $i$ and $j$ of $S$ such that $h(\psi_w(p_i))=h(\psi_w(p_j))$.
Let $u=\iota^{-1}(\psi_w^* h)\in l(V_0)$. Then, $u(p_i)=u(p_j)$.
Since
\[
u=u(p_i)\bfone+\sum_{k\in S\setminus\{i,j\}}(u(p_k)-u(p_i))e_k,
\]
we have
\[
A_i u=u(p_i)\bfone+\frac1{N+3}\sum_{k\in S\setminus\{i,j\}}(u(p_k)-u(p_i))(\bfone+e_k-e_i).
\]
Because $e_k-e_i\in E_j$ for $k\in S\setminus\{i,j\}$ (recall \Eq{Ek}), \Lem{eigen} implies for $n\in\N$ that
\begin{align}\label{eq:AjAi}
A_j^nA_i u&=u(p_i)\bfone+\frac1{N+3}\sum_{k\in S\setminus\{i,j\}}(u(p_k)-u(p_i))\bfone\\
&\quad+\left(\frac1{N+3}\right)^{n+1}\sum_{k\in S\setminus\{i,j\}}(u(p_k)-u(p_i))(e_k-e_i).\nonumber
\end{align}
From \Eq{ww'}, \Eq{cellenergy}, and \Eq{AjAi},
\begin{align*}
\nu_h(K_{w i j^n})
&=\left(\frac{N+3}{N+1}\right)^{|w|}\nu_{\psi_w^* h}(K_{i j^n})\\
&=2\left(\frac{N+3}{N+1}\right)^{|w|+n+1}\cE(\psi_{i j^n}^*\psi_w^* h,\psi_{i j^n}^*\psi_w^* h)\\
&=2\left(\frac{N+3}{N+1}\right)^{|w|+n+1}Q_0(A_j^nA_i u,A_j^nA_i u)\\
&=2\left(\frac{N+3}{N+1}\right)^{|w|+n+1}\left(\frac{1}{N+3}\right)^{2(n+1)}\\
&\quad\times\sum_{k,l\in S\setminus\{i,j\}}(u(p_k)-u(p_i))(u(p_l)-u(p_i))Q_0(e_k-e_i, e_l-e_i)\\
&=O\left(\left\{\frac1{(N+1)(N+3)}\right\}^n\right)\qquad (n\to\infty).
\end{align*}
By combining \Prop{4}, we obtain that
\[
\frac{\nu_h(K_{w i j^n})}{\nu(K_{w i j^n})}=O\biggl(\left(\frac1{N+1}\right)^{2n}\biggl)\qquad (n\to\infty),
\]
which implies \Eq{main1}.
\smallskip

\noindent
\textbf{Step 2.} We consider the general case.
Let $\b_1\le\b_2\le\cdots\le \b_{N+1}$ be a rearrangement of $h(\psi_w(p_1)), h(\psi_w(p_2)),\dots,h(\psi_w(p_{N+1}))$.
If $\b_i=\b_{i+1}$ for some $i$, then \Eq{main1} holds by Step~1.
We will assume $\b_i<\b_{i+1}$ for all $i$.
Let $s=(N+1)^{-1}\sum_{i\in S}h(\psi_w(p_i))$.
Then, $\b_1<s<\b_{N+1}$.
There are three possibilities:
\smallskip

\noindent (i) The case of $(\b_1<\,)\,\b_2< s$. Take $i,j\in S$ such that $\b_1=h(\psi_w(p_i))$ and $\b_2=h(\psi_w(p_j))$. From \Prop{8}, $h$ attains its maximum on $\psi_w(\overline{p_ip_j})$ at some point in $\psi_w(\overline{p_ip_j})\setminus\{\psi_w(p_i),\psi_w(p_j)\}$. That is, $M(s;\b_1,\b_2)\in(0,1)$.

Fix $l\in S\setminus\{i,j\}$ and define $\hat h=\iota(A_w^{-1}e_l)\in\cH$. We note that $A_w$ is invertible and $\psi_w^*\hat h=\iota(e_l)$.
Let $\eps>0$.
From the continuity of $M^{-1}$ (\Prop{M}) and the density of odd dyadic rationals in $[0,1]$, we can take $s'\in\R$ so that $|s-s'|<\eps$ and $M(s';\b_1,\b_2)$ is described as $m/2^n$ for some $n\in\N$ and $m\in\{1,3,5,\dots,2^n-1\}$.
Let $q=\Phi((m-1)/2^n)$ and $r=\Phi((m+1)/2^n)$, where $\Phi\colon[0,1]\to\overline{p_i p_j}$ is the natural identification map. 

Define $h'=h+(N+1)(s'-s)\hat h$. Then, $(\psi_w^*h')(p_i)=\b_1$, $(\psi_w^*h')(p_j)=\b_2$, and
\begin{align*}
\frac{1}{N+1}\sum_{k\in S}(\psi_w^*h')(p_k)
&=\frac{1}{N+1}\sum_{k\in S}(\psi_w^*h)(p_k)
+(s'-s)\sum_{k\in S}(\psi_w^*\hat h)(p_k)\\
&=s+(s'-s)=s'.
\end{align*}
Thus, $\psi_w^* h'$ attains its maximum on $[(m-1)/2^n,(m+1)/2^n]\simeq\overline{qr}\subset\overline{p_i p_j}$ at $M(s';\b_1,\b_2)=m/2^n$. 
Take $\hat w\in W_{n-1}$ such that $q=\psi_{w\hat w}(p_i)$ and $r=\psi_{w\hat w}(p_j)$. Applying \Thm{10} to $\psi_{w\hat w}^* h'$, we obtain that $h'(q)=h'(r)$. (If the hypothesis of \Thm{10} is not satisfied, then $h'(q)=h'(r)$ is immediate; otherwise \Thm{10} applies.)
From the result of Step~1,
\begin{equation}\label{eq:w''}
\inf_{w''\in W_*}\frac{\nu_{h'}(K_{w\hat w w''})}{\nu(K_{w\hat w w''})}=0.
\end{equation}
On the other hand, for any Borel set $A$ of $K$,
\begin{align*}
\sqrt{\nu_h(A)}&\le \sqrt{\nu_{h'}(A)}+\sqrt{\nu_{h-h'}(A)}
\qquad\text{(from e.g., \cite[(2.1)]{Hi10})}\\
&=\sqrt{\nu_{h'}(A)}+(N+1)|s'-s|\sqrt{\nu_{\hat h}(A)}.
\end{align*}
Combining this with \Eq{w''} and \Lem{simple}, we obtain that
\[
\inf_{w''\in W_*}\sqrt{\frac{\nu_{h}(K_{w\hat w w''})}{\nu(K_{w\hat w w''})}}\le (N+1)\eps\sqrt{\nuesssup_{x\in K_{w\hat w}}\frac{\dd \nu_{\hat h}}{\dd \nu}(x)}
\le (N+1)|P\iota^{-1}(\hat h)|\eps.
\]
Since $\eps>0$ is arbitrary, we obtain \Eq{main1}.
\smallskip

\noindent(ii) The case of $s<\b_{N}\,(\,<\b_{N+1})$. Take $i,j\in S$ such that $\b_{N+1}=h(\psi_w(p_i))$ and $\b_{N}=h(\psi_w(p_j))$. From \Prop{8}, $-h$ attains its maximum on $\psi_w(\overline{p_ip_j})$ at some point in $\psi_w(\overline{p_ip_j})\setminus\{\psi_w(p_i),\psi_w(p_j)\}$.
Then, we obtain \Eq{main1} by the same argument as above case.
\smallskip

\noindent(iii) The other possibility is only when $N=2$ and $\b_1<\b_2=s<\b_3$. In such a case, $\b_2-\b_1=\b_3-\b_2$. 
Let $i,j,k\in S$ be taken so that $\b_1=h(\psi_w(p_i))$, $\b_2=h(\psi_w(p_j))$, and $\b_3=h(\psi_w(p_k))$. Let $u=\iota^{-1}(\psi_w^* h)\in l(V_0)$. Then, $u$ is described as $u=\b_2\bfone+(\b_2-\b_1)(e_k-e_i)$ so that for $n\in\N$,
\[
A_j^n u=\b_2\bfone+\left(\frac1{N+3}\right)^n(\b_2-\b_1)(e_k-e_i)
\]
from \Lem{eigen}.
From \Eq{cellenergy},
\begin{align*}
\nu_h(K_{w j^n})
&=2\left(\frac{N+3}{N+1}\right)^{|w|+n}\cE(\psi_{j^n}^*\psi_w^* h,\psi_{j^n}^*\psi_w^* h)\\
&=2\left(\frac{N+3}{N+1}\right)^{|w|+n}Q_0(A_j^n u, A_j^n u)\\
&=2\left(\frac{N+3}{N+1}\right)^{|w|+n}\left(\frac{1}{N+3}\right)^{2n}(\b_2-\b_1)^2Q_0(e_k-e_i,e_k-e_i)\\
&=O\left(\left\{\frac1{(N+1)(N+3)}\right\}^n\right)\qquad (n\to\infty).
\end{align*}
By combining this with \Prop{4}, we obtain that
\[
\frac{\nu_h(K_{w j^n})}{\nu(K_{w j^n})}=O\biggl(\left(\frac1{N+1}\right)^{2n}\biggl)\qquad (n\to\infty),
\]
which implies \Eq{main1}.
\end{proof}
\begin{cor}\label{cor:main1b}
Let $h\in\cH$ and $w\in W_*$.
Then,
\[
\nuessinf_{x\in K_w}\frac{\dd\nu_h}{\dd\nu}(x)=0.
\]
\end{cor}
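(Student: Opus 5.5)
The corollary should follow directly from \Thm{main1a} by comparing cell averages with the essential infimum. Fix a Borel version $\rho$ of $\dd\nu_h/\dd\nu$ and put $c:=\nuessinf_{x\in K_w}\rho(x)\ge0$. Nonnegativity is automatic, since $\nu_h$ is a positive measure and therefore $\rho\ge0$ $\nu$-a.e. So we only need to show $c\le0$.

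Assume $c>0$ and derive a contradiction. For every $w'\in W_*$ we have $K_{ww'}\subset K_w$, so $\rho\ge c$ $\nu$-a.e.\ on $K_{ww'}$. Integrating over this cell gives
\[
\nu_h(K_{ww'})=\int_{K_{ww'}}\rho\,\dd\nu\ge c\,\nu(K_{ww'}).
\]
To divide by $\nu(K_{ww'})$ we need $\nu(K_{ww'})>0$. This is the only point requiring a check. It follows from \Eq{cellenergy}: $\nu(K_{ww'})$ equals $2\bigl(\frac{N+3}{N+1}\bigr)^{|ww'|}\sum_k Q_0(A_{ww'}e_k,A_{ww'}e_k)$. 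Since $A_{ww'}$ is invertible, the vectors $A_{ww'}e_k$ span $l(V_0)$, so not all of them are constant, and the sum is strictly positive. The positivity also follows from \Prop{4}, which is how the ratio in \Eq{main1} is implicitly treated as well defined.

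Dividing, we get $\nu_h(K_{ww'})/\nu(K_{ww'})\ge c$ for all $w'\in W_*$. Hence the infimum in \Eq{main1} is at least $c>0$, which contradicts \Thm{main1a}. Therefore $c=0$. The essential infimum depends only on the $\nu$-class of $\rho$, so the conclusion holds for every version. The argument contains no real obstacle beyond the positivity of $\nu$ on cells.
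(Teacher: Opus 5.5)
Your proof is correct and follows the paper's argument: on every subcell you bound $\nu_h(K_{ww'})\ge c\,\nu(K_{ww'})$ by the essential infimum, and then \Thm{main1a} forces $c=0$. Your explicit check that $\nu(K_{ww'})>0$, via \Eq{cellenergy} and the invertibility of $A_{ww'}$, supplies a point the paper uses only implicitly.
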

\begin{proof}
Let $\dl:=\nuessinf_{x\in K_w}({\dd\nu_h}/{\dd\nu})(x)$.
Then, for any $w'\in W_*$,
\[
\nu_h(K_{w w'})=\int_{K_{w w'}}\frac{\dd\nu_h}{\dd\nu}\,\dd\nu\ge \dl\nu(K_{w w'}).
\]
\Thm{main1a} implies that $\dl$ has to be $0$.
\end{proof}
If $h\in\cH$ is a nonconstant function, by \cite[Theorem~5.6]{Hi10},
\begin{equation}\label{eq:Hi10}
\frac{\dd\nu_h}{\dd\nu}>0\quad\nu\text{-a.e.}
\end{equation}
In particular, for any $w\in W_*$
\begin{equation}\label{eq:esssup}
\nuesssup_{x\in K_w}\frac{\dd\nu_h}{\dd\nu}(x)>0.
\end{equation}
\begin{proof}[Proof of \Thm{main1}]
First, we note that $\nu(V_*)=0$.
Let $x\in K\setminus V_*$. For $n\in\Z_+$, let $[x]_n$ denote the unique element of $W_n$ such that $x\in K_{[x]_n}$. Then, $K_{[x]_n}\supset K_{[x]_{n+1}}$ for $n\in\Z_+$ and $\bigcap_{n=0}^\infty K_{[x]_n}=\{x\}$.
Define
\[
f(x)=\limsup_{n\to\infty}\frac{\nu_h(K_{[x]_n})}{\nu(K_{[x]_n})},\qquad x\in K\setminus V_*.
\]
From the martingale convergence theorem, $f=\dd\nu_h/\dd\nu$ $\nu$-a.e.
Let $\hat K=\{x\in K\setminus V_*\mid 0<f(x)<\infty\}$. Then $\nu(K\setminus \hat K)=0$ from \Eq{Hi10}.
Let $g$ be an arbitrary Borel $\nu$-version of $\dd\nu_h/\dd\nu$ and $x\in \hat K$.
Since $K_{[x]_{n+1}}\subset K_{[x]_n}$ for $n\in \Z_+$, the sequence $\{\nuesssup_{y\in K_{[x]_n}}g(y)\}_{n=0}^\infty$ is nonincreasing, so it has a limit $\gm$. Further, it holds that
\begin{equation}\label{eq:gm}
\gm\ge f(x)\,(>0).
\end{equation}
Indeed, if $\gm<f(x)$, there exist some $\eps>0$ and $n_0\in\N$ such that for any $n\ge n_0$, $\nuesssup_{y\in K_{[x]_n}}g(y)\le f(x)-\eps$.
Then
\[
\frac{\nu_h(K_{[x]_n})}{\nu(K_{[x]_n})}
=\frac{1}{\nu(K_{[x]_n})}\int_{K_{[x]_n}}g(y)\,\nu(\dd y)
\le f(x)-\eps.
\]
By taking $\limsup_{n\to\infty}$ we obtain $f(x)\le f(x)-\eps$, which is a contradiction.

On the other hand, from \Cor{main1b},
\begin{equation}\label{eq:g}
\nuessinf_{y\in K_{[x]_n}}g(y)=0,\qquad n\in\Z_+.
\end{equation}
By noting $\nu(K_{[x]_n})>0$ for all $n\in\Z_+$, \Eq{gm} and \Eq{g} imply that there exist two sequences $\{x_n\}$ and $\{x'_n\}$ converging to $x$ such that $\liminf_{n\to\infty}g(x_n)\ge f(x)>0$ and $\lim_{n\to\infty} g(x'_n)=0$. This means that $g$ is discontinuous at $x$.
\end{proof}
\begin{rem}\label{rem:comment}
When $N=2$, \cite[Theorem~3.5]{BHS14} shows that, for any
$w\in W_*$,
\[
   \nuesssup_{x\in K_w}
   \frac{\dd\nu_h}{\dd\nu}(x)
\]
equals a positive constant depending on $h$, but independent of
$w$.  Since the space of harmonic functions modulo constants is
two-dimensional in this case, this fact also follows from
\Cor{main1b}.  Combined with the argument in the proof of
\Thm{main1}, this shows that we may take $\hat K=K$ when
$N=2$; that is, every Borel $\nu$-version of $\dd\nu_h/\dd\nu$ is
discontinuous at every point of $K$.
For $N\ge3$, only \Eq{esssup} is known, and accordingly the
exceptional set $K\setminus\hat K$ in \Thm{main1} may be
nontrivial.  Whether this exceptional set can be removed remains open.
\end{rem}

\section{Proof of Theorem~\protect\ref{th:main3}}\label{sec:main3}
We fix $i,j\in S$ with $i\ne j$.
Let $Y$ be a two-dimensional subspace of $\tilde l(V_0)$ that is defined as
\[
    Y=\{u\in\tilde l(V_0)\mid \text{$u$ is constant on $V_0\setminus\{p_i,p_j\}$}\}.
\]
Let $Z$ be the orthogonal complement of $Y$ in $\tilde l(V_0)$, that is,
\[
  Z=\{u\in\tilde l(V_0)\mid u(p_i)=u(p_j)=0\}.
\]
We often identify $Y$ with $\R^2$ by $u\mapsto (u(p_i),u(p_j))$, in other words, identify $\a u_1+\b u_2\in Y$ for $\a,\b\in\R$ with $(\a,\b)\in\R^2$, where
\[
u_1(p)=\begin{cases}
1& (p=p_i)\\
0& (p=p_j)\\
-\frac{1}{N-1}& (p\in V_0\setminus\{p_i,p_j\}),
\end{cases}
\quad
u_2(p)=\begin{cases}
0& (p=p_i)\\
1& (p=p_j)\\
-\frac{1}{N-1}& (p\in V_0\setminus\{p_i,p_j\}).
\end{cases}
\]
For $k\in S$, let $T_k=P A_k P$, which is regarded as a bounded operator on $\tilde l(V_0)$ as well as on $l(V_0)$.
\begin{lem}\label{lem:invariant}
The following hold.
\begin{enumerate}
\item $Y$ is an invariant space with respect to both $T_i$ and $T_j$. The representation matrix $L_i$ (resp.\ $L_j$) of $T_i$ (resp.\ $T_j$) on $Y\simeq \R^2$ is given by
\begin{equation}\label{eq:expression_L}
L_i=\frac1{N+3}\begin{pmatrix} N+1 & 0\\-1& 1\end{pmatrix}
\quad\text{and}\quad
L_j=\frac1{N+3}\begin{pmatrix} 1 & -1\\0& N+1\end{pmatrix},
\end{equation}
respectively.
\item $Z$ is an invariant space with respect to both $T_i$ and $T_j$, and $T_i|_Z=T_j|_Z=\frac1{N+3}I_Z$, where $I_Z$ is the identity operator on $Z$.
\end{enumerate}
\end{lem}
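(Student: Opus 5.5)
This is a direct computation with the explicit matrices $A_k$ and the projection $P$. The key observation is that $P$ commutes with $A_k$ on $\tilde l(V_0)$ modulo constants. Since $A_k\bfone=\bfone$, we have $A_k(\R\bfone)\subset\R\bfone$, and therefore $PA_kP u=PA_k u$ for $u\in\tilde l(V_0)$. So it suffices to compute $A_k u$ for $u\in Y$ or $u\in Z$, and then subtract the mean.

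\textbf{Part (ii).} Let $u\in Z$, so $u(p_i)=u(p_j)=0$ and $(u,\bfone)=0$. Then $(u,e_i)=0$ and $(u,\bfone)=0$, hence $u\in E_i$. By \Lem{eigen}, $A_iu=\frac1{N+3}u$. Because $u$ already has mean zero, $T_iu=\frac1{N+3}u\in Z$. The same argument with $E_j$ gives $T_j u=\frac1{N+3}u$. This proves (ii).

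\textbf{Part (i).} We compute $T_iu_1$ and $T_iu_2$ from the formula for $A_i$, then read off coordinates in the basis $\{u_1,u_2\}$. Write $c=-\frac1{N-1}$ for the common value of $u_1,u_2$ off $\{p_i,p_j\}$.

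For $A_iu_1$:
\begin{itemize}
\item the $i$th entry is $u_1(p_i)=1$;
\item the $j$th entry is $\frac1{N+3}\bigl(2\cdot1+2\cdot0+(N-1)c\bigr)=\frac1{N+3}$;
\item each remaining entry is $\frac1{N+3}\bigl(2+2c+(N-2)c\bigr)=\frac1{N+3}$.
\end{itemize}
The mean of $A_iu_1$ equals the mean of $A_i$ applied to $\bfone$-free data. More simply, $(A_iu_1,\bfone)/(N+1)$ can be computed directly: it is $\frac{1}{N+1}\bigl(1+\frac{N}{N+3}\bigr)=\frac{2N+3}{(N+1)(N+3)}$. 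Subtracting this mean, the resulting vector is constant on $V_0\setminus\{p_i,p_j\}$, so it lies in $Y$. Its $(p_i,p_j)$-coordinates are $\bigl(1-\text{mean},\ \frac1{N+3}-\text{mean}\bigr)$. These should simplify to $\bigl(\frac{N+1}{N+3},\ -\frac1{N+3}\bigr)$, which is the first column of $L_i$.

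Analogously, $A_iu_2$ has $i$th entry $0$, $j$th entry $\frac{1}{N+3}(2+(N-1)c)=\frac1{N+3}$, and other entries $\frac1{N+3}(1+2c+(N-2)c)=0$. Its mean is $\frac1{(N+1)(N+3)}$. After projection, the coordinates become $\bigl(0,\ \frac1{N+3}\bigr)$, which is the second column of $L_i$.

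Invariance of $Y$ holds in both cases because $A_i$ maps vectors constant on the other vertices to vectors constant there; this follows from the symmetry of $A_i$ under permutations fixing $i$ and $j$.

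\textbf{The case $T_j$.} The formula for $L_j$ follows by the symmetry exchanging $i$ and $j$, which swaps $u_1\leftrightarrow u_2$ and conjugates $L_i$ by the swap matrix.

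The only real obstacle is the bookkeeping of the mean subtraction. I would double-check it using the identity $P A_i P=PA_i$ on $\tilde l(V_0)$, together with the coordinate convention that a vector in $Y$ is identified with its values at $p_i$ and $p_j$.
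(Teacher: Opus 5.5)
Your strategy is the paper's: compute $A_iu_1$ and $A_iu_2$ and subtract the mean for (i), and use $Z\subset E_i\cap E_j$ with \Lem{eigen} for (ii). Part (ii) is correct. So is the permutation-symmetry argument for $T_i(Y)\subset Y$, and so is obtaining $L_j$ from $L_i$ by conjugating with the swap $i\leftrightarrow j$ (which interchanges $u_1,u_2$ and commutes with $P$).

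The problem is that the explicit computation in (i), which is the entire content of that part, is wrong. Your closing ``these should simplify to'' does not follow from the numbers you wrote. At a vertex $p_k$ with $k\notin\{i,j\}$, the relevant sums are $2+2c+(N-2)c=2+Nc=\frac{N-2}{N-1}$ and $1+2c+(N-2)c=1+Nc=-\frac{1}{N-1}$. You evaluated $Nc$ as $-1$, but $Nc=-\frac{N}{N-1}$; only $(N-1)c=-1$, which you used correctly at $p_j$. Hence the entries of $A_iu_1$ and $A_iu_2$ off $\{p_i,p_j\}$ are $\frac{N-2}{(N+3)(N-1)}$ and $-\frac{1}{(N+3)(N-1)}$. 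The means are therefore $\frac{2}{N+3}$ and $0$, not $\frac{2N+3}{(N+1)(N+3)}$ and $\frac{1}{(N+1)(N+3)}$. With your means you would get $1-\frac{2N+3}{(N+1)(N+3)}=\frac{N(N+2)}{(N+1)(N+3)}\neq\frac{N+1}{N+3}$ in the first column, and a nonzero $p_i$-coordinate $-\frac{1}{(N+1)(N+3)}$ in the second. That is a different matrix, so the proposal as written does not establish \Eq{expression_L}.

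With the corrected values, $1-\frac{2}{N+3}=\frac{N+1}{N+3}$ and $\frac{1}{N+3}-\frac{2}{N+3}=-\frac{1}{N+3}$ give the first column. The identity $A_iu_2=\frac{1}{N+3}u_2$ (already of mean zero) gives the second, exactly as in the paper.

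You can avoid this bookkeeping altogether. Since $u_2(p_i)=0$ and $(u_2,\bfone)=0$, you have $u_2\in E_i$, so $T_iu_2=\frac{1}{N+3}u_2$ by \Lem{eigen}; this is the same argument as your part (ii). For $u_1$, use \Eq{si} with $s=0$: the mean of $A_iu$ for $u\in\tilde l(V_0)$ is $\frac{2u(p_i)}{N+3}$. Together with your invariance argument, this means only the $p_i$- and $p_j$-entries of $A_iu_1$ are needed, and those you computed correctly.
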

\begin{proof}
(i) For $T_i$, it suffices to prove that 
\[
T_i u_1=\frac1{N+3}\{(N+1)u_1-u_2\}\quad\text{and}\quad T_i u_2=\frac1{N+3}u_2.
\]
For notational simplicity, we prove this for $i=1$ and $j=2$.
By identifying $l(V_0)$ with $\R^{N+1}$, we have
\begin{align*}
A_1 u_1&=\frac1{N+3}\prescript{t\!}{}{\left(N+3,1,\frac{N-2}{N-1},\dots,\frac{N-2}{N-1}\right)},\\
A_1 u_2&=\frac1{N+3}\prescript{t\!}{}{\left(0,1,-\frac{1}{N-1},\dots,-\frac{1}{N-1}\right)}
\end{align*}
from \Eq{A1}.
Then,
\begin{align*}
T_1 u_1&=A_1 u_1-\frac1{N+1}(A_1 u_1,\bfone)\\
&=\frac1{N+3}\prescript{t\!}{}{\left(N+1,-1,-\frac{N}{N-1},\dots,-\frac{N}{N-1}\right)}\\
&=\frac1{N+3}\{(N+1)u_1-u_2\}\\
\shortintertext{and}
T_1 u_2&=A_1 u_2-\frac1{N+1}(A_1 u_2,\bfone)\\
&=A_1 u_2
=\frac1{N+3}u_2.
\end{align*}
The claim for $T_j$ is similarly proved.

(ii) Let $k\in\{i,j\}$. From \Lem{eigen}, $Z\subset E_k$. 
Therefore, for $u\in Z$, $A_k u=(N+3)^{-1}u\in \tilde l(V_0)$, which implies that $T_k u=A_k u= (N+3)^{-1}u$.
\end{proof}
In what follows, $L_i$ and $L_j$ also denote the linear operators $T_i|_Y$ and $T_j|_Y$, respectively.

Let us recall that a subset $C$ of a real vector space is called a \emph{cone} (with vertex at $0$) if the following hold:
\begin{itemize}
\item $C$ is convex.
\item For every $t\ge0$, $tC:=\{t u\mid u\in C\}$ is a subset of $C$.
\item $C\cap(-C)=\{0\}$.
\end{itemize}
A cone $C$ is called a \emph{proper cone} if $C$ is further closed and has nonempty interior.

We fix $M\,(>N)$ that is specified later.
We define a proper cone $C$ of $Y$ by
\[
C=\{s(M,-1)+t(1,-M)\mid s\ge0,\ t\ge0\}\subset \R^2\simeq Y.
\]
By introducing the projective coordinate $r=-\b/\a$ for $(\a,\b)\in\R^2$,
we see that $(\a,\b)\in C\setminus\{0\}$ if and only if $\a>0$ and $1/M\le r\le M$.
The maps $\tilde L_i$ and $\tilde L_j$ induced by $L_i$ and $L_j$ in the projective coordinate $r$ are expressed as
\[
\tilde L_i(r)=\frac{1+r}{N+1}
\quad\text{and}\quad
\tilde L_j(r)=\frac{(N+1)r}{1+r},
\]
respectively, from the expressions \Eq{expression_L}.
The unique fixed points of $\tilde L_i$ and $\tilde L_j$ in $(0,\infty)$ are $r=1/N$ and $r=N$, respectively. Hence, for any $M>N$, the interval $[1/M,M]$ contains both fixed points, and a direct computation shows that both maps send $[1/M,M]$ into its interior. That is, $L_k(C\setminus\{0\})\subset \mathring{C}$ for $k=i,j$, where $\mathring{C}$ denotes the interior of $C$.

Let $C^*\subset Y^*$ denote the dual cone of $C$ that is defined as $C^*=\{\ph\in Y^*\mid \ph(u)\ge0 \text{ for all }u\in C\}$. For $k=i,j$, the adjoint operator $L_k^*$ of $L_k$ satisfies $L^*_k(C^*\setminus \{0\})\subset \mathring{C^*}$, where $\mathring{C^*}$ is the interior of $C^*$ in $Y^*$.

Hilbert's projective metrics of $\mathring{C}$ and $\mathring{C^*}$ are denoted by $d_C$ and $d_{C^*}$, respectively. More precisely, 
\[
d_C(u,u')=\log\frac{\inf\{\lm>0\mid u\le_C \lm u'\}}{\sup\{\lm>0\mid \lm u'\le_C u\}},\quad u,u'\in\mathring{C},
\]
where $u\le_C v$ is defined by $v-u\in C$. $d_{C^*}$ is similarly defined.
It should be noted that $d_C(su,s'u')=d_C(u,u')$ for $s>0$ and $s'>0$, and $d_C(u,u')=0$ if and only if $u=su'$ for some $s>0$. The metric space $(\mathring{C}/{\sim},d_C)$ is complete, where $u\sim u'$ is defined as $d_C(u,u')=0$.

Since $C$ is a two-dimensional cone, we have the following expression in terms of the projective coordinate: when $1/M<r\le r'<M$,
\begin{equation}\label{eq:Hilbert_r}
  d_C(r,r')=\log\frac{(r'-1/M)(M-r)}{(r-1/M)(M-r')}.
\end{equation}
(see, e.g., \cite{Bi57,Nu88,EN95} for the general theory).
\begin{lem}\label{lem:diameter}
Let $\Delta=\max_{k=i,j}\diam_{d_C}L_k(C\setminus\{0\})$ and $\Delta^*=\max_{k=i,j}\diam_{d_{C^*}}L_k^*(C^*\setminus\{0\})$.
Then, $\Delta=\Delta^*=\log R_N$, where
\[
R_N=\frac{(M^2+M-N-1)\{(N+1)M^2-M-1\}}{M(M-N)(MN-1)}.
\]
\end{lem}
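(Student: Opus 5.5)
The plan is to compute everything explicitly in the projective coordinate $r=-\b/\a$, using \Eq{Hilbert_r}. First I treat $\diam_{d_C}L_i(C\setminus\{0\})$. Since $\tilde L_i(r)=(1+r)/(N+1)$ is increasing, it maps $[1/M,M]$ onto
\[
[r,r']=\Bigl[\frac{M+1}{M(N+1)},\ \frac{M+1}{N+1}\Bigr].
\]
Both endpoints lie in $(1/M,M)$ because $M>N$. In a two-dimensional cone the Hilbert metric is monotone along the interval: for $r\le r_1\le r_2\le r'$ the cross-ratio in \Eq{Hilbert_r} is maximized at the endpoints. Hence the diameter equals $d_C(r,r')$. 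The four factors are
\begin{gather*}
r'-\frac1M=\frac{M^2+M-N-1}{M(N+1)},\qquad M-r=\frac{(N+1)M^2-M-1}{M(N+1)},\\
r-\frac1M=\frac{M-N}{M(N+1)},\qquad M-r'=\frac{MN-1}{N+1}.
\end{gather*}
Substituting them into \Eq{Hilbert_r} gives exactly $\log R_N$.

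Next I treat $L_j$ by symmetry, not by a second computation. The linear involution $J(\a,\b)=(-\b,-\a)$ maps both generators $(M,-1)$ and $(1,-M)$ of $C$ into $C$, so $J(C)=C$. It therefore preserves $d_C$, and in the projective coordinate it acts as $r\mapsto 1/r$. From \Eq{expression_L} one checks $JL_jJ=L_i$, or equivalently $1/\tilde L_j(1/r)=\tilde L_i(r)$. Hence $L_j(C\setminus\{0\})=J\bigl(L_i(C\setminus\{0\})\bigr)$, the two images have the same $d_C$-diameter, and $\Delta=\log R_N$.

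For the dual cone, I use the explicit form. A functional $\ph=(a,b)$ is nonnegative on $C$ exactly when $aM-b\ge0$ and $a-bM\ge0$. So $C^*$ is generated by $(M,1)$ and $(-1,-M)$, and it is again a sector of the same shape. In the coordinate $\rho=b/a$ with $a>0$, it corresponds to the interval $[-M,-1/M]$, or, after the sign change $\rho\mapsto-\rho$, to $[1/M,M]$; the analogue of \Eq{Hilbert_r} holds there. The adjoints are the transposes ${}^tL_i=\frac1{N+3}\begin{pmatrix}N+1&-1\\0&1\end{pmatrix}$ and ${}^tL_j$. I compute their induced Möbius maps on $[1/M,M]$ and expect them to have exactly the same form as $\tilde L_j$ and $\tilde L_i$, only interchanged. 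If that holds, the same endpoint computation gives $\Delta^*=\log R_N$.

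As a fallback, or as a cross-check, I would invoke the general duality fact that $\diam_{d_{C^*}}L^*(C^*\setminus\{0\})=\diam_{d_C}L(C\setminus\{0\})$ for a linear map $L$ of a finite-dimensional proper cone into itself (see \cite{Bi57,EN95}). The only step needing care is this identification of the dual action. The rest is routine algebra.
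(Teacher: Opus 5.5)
Your computation of $\diam_{d_C}L_i(C\setminus\{0\})$ is the same as the paper's. Your remark that the diameter of an interval is attained at its endpoints is a detail the paper leaves implicit. The involution $J(\a,\b)=(-\b,-\a)$ with $JL_jJ=L_i$ is a clean, correct replacement for the paper's ``similar calculation'' for $L_j$.

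The explicit dual-cone step, however, fails as written. You correctly find that $C^*$ is generated by $(M,1)$ and $(-1,-M)$. But the set $\{(a,b)\mid a>0,\ b/a\in[-M,-1/M]\}$ that you then describe is $C$ itself, not $C^*$. Since $C$ is a sector of opening less than $\pi/2$, its dual is strictly larger. It contains $(1,0)$ and $(0,-1)$ in its interior. In the coordinate $\rho=b/a$ it is the arc $\{\rho\le 1/M\}\cup\{\infty\}\cup\{\rho\ge M\}$ through $\infty$, i.e. the complement of $(1/M,M)$. Accordingly, ${}^t\!L_i$, which acts by $\rho\mapsto\rho/(N+1-\rho)$, maps neither $[1/M,M]$ nor $[-M,-1/M]$ into itself: for instance $1/M\mapsto 1/((N+1)M-1)<1/M$. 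So the planned comparison with $\tilde L_j,\tilde L_i$ cannot be carried out in that coordinate. Even in a correct projective coordinate that identifies $C^*$ with $[1/M,M]$, the two adjoints become $\tilde L_j$ and $\tilde L_i$ \emph{simultaneously} only when $M^2-2NM+1=0$, i.e. only for the $M$ chosen later in the paper. So that expectation is not right for general $M>N$.

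What actually proves $\Delta^*=\log R_N$ in your proposal is the fallback, the duality identity $\diam_{d_{C^*}}L^*(C^*\setminus\{0\})=\diam_{d_C}L(C\setminus\{0\})$. That is exactly the argument the paper uses, quoting the identity from the literature.

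If you want an explicit verification instead, use the linear coordinates $(\ph(q_1),\ph(q_2))$ with $q_1=(M,-1)$ and $q_2=(1,-M)$. In these coordinates $C^*$ is the closed positive quadrant, and $d_{C^*}(\ph,\ph')=|\log t(\ph)-\log t(\ph')|$ with $t(\ph)=\ph(q_2)/\ph(q_1)$. The endpoints ${}^t\!L_i(M,1)$ and ${}^t\!L_i(-1,-M)$ give $t=\frac{NM-1}{(N+1)M^2-M-1}$ and $t=\frac{M^2+M-N-1}{M(M-N)}$, whose ratio is exactly $R_N$. Since $J$ also preserves $C^*$ and ${}^t\!L_j=J\,{}^t\!L_iJ$, the case of $L_j^*$ follows in the same way as for $C$.
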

\begin{proof}
Since 
\[
\tilde L_i(1/M)=\frac{M+1}{M(N+1)}
\quad\text{and}\quad
\tilde L_i(M)=\frac{M+1}{N+1},
\]
the image of $[1/M,M]$ by $\tilde L_i$ is $[(M+1)/\{M(N+1)\},(M+1)/(N+1)]$.
By using \Eq{Hilbert_r},
\[
\diam_{d_C}L_i(C\setminus\{0\})=\log\frac{\left(\frac{M+1}{N+1}-\frac1M\right)\left\{M-\frac{M+1}{M(N+1)}\right\}}{\left\{\frac{M+1}{M(N+1)}-\frac1M\right\}\left(M-\frac{M+1}{N+1}\right)}
=\log R_N.
\]
By a similar calculation, we also have $\diam_{d_C}L_j(C\setminus\{0\})=\log R_N$.

From the general theory of Hilbert's projective metrics, the identity $\diam_{d_C}L_k(C\setminus\{0\})=\diam_{d_{C^*}}L^*_k(C^*\setminus\{0\})$ holds for $k=i,j$ (see, e.g., \cite[Lemma~17]{RKW11}).
This implies the claim.
\end{proof}
At this stage, we optimize $R_N$.
After a long but elementary calculation,
\[
\frac{d}{dM}(\log R_N)=\frac{(M^2-1)(N+1)(M^2-2NM+1)(NM^2-2M+N)}{M(M-N)(MN-1)(M^2+M-N-1)\{(N+1)M^2-M-1\}},
\]
which turns out that $R_N$ takes the minimal value $4N+5$ when $M=N+\sqrt{N^2-1}$.
In what follows, we set $M=N+\sqrt{N^2-1}$.

From the Birkhoff--Hopf theorem, it holds that
\begin{align}
d_C(L_k u,L_k u')&\le \tau d_C(u,u'),
\qquad k\in\{i,j\},\ u,u'\in\mathring{C},\label{eq:dC}\\
d_{C^*}(L^*_k \ph,L^*_k \ph')&\le \tau d_{C^*}(\ph,\ph'),
\qquad k\in\{i,j\},\ \ph,\ph'\in\mathring{C^*}\label{eq:dC*}
\end{align}
with
\[
\tau:=\tanh\frac{\Delta}{4}=\frac{\sqrt{R_N}-1}{\sqrt{R_N}+1}
=\frac{\sqrt{4N+5}-1}{\sqrt{4N+5}+1}=2^{-\gm_N}\in(0,1).
\]

We fix $\xi\in \mathring{C^*}$. Let $C_\xi$ be an affine section of $C$ that is defined as $C_\xi=\{y\in C\mid\xi(y)=1\}$.
Define
\[
\Sigma=\left\{\frac{L_k y}{\xi(L_k y)}\mathrel{}\middle|\mathrel{}y\in C_\xi,\ k\in\{i,j\}\right\}.
\]
Then, $\Sigma$ is a compact subset in $C_\xi\cap \mathring{C}$.
In the projective coordinate $r$, $\Sigma$ corresponds to a compact subset of the open interval $(1/M, M)$. From the expression \Eq{Hilbert_r}, $\frac{\partial}{\partial r'} d_C(r, r')$ is bounded above and below by positive constants on any compact sets of $(1/M, M)^2$. Hence Hilbert's projective metric and the Euclidean metric on $\Sigma$ are comparable: there exists $c>0$ such that
\begin{equation}\label{eq:comparable}
  c^{-1}d_C(y,y')\le |y-y'|\le c d_C(y,y'),\quad y,y'\in \Sigma.
\end{equation}
We set $q_1=(M,-1)$ and $q_2=(1,-M)\in\R^2\simeq Y$.
We let $q:=q_1+q_2\in \mathring{C}$ and define $C_q^*=\{\ph\in C^*\mid \ph(q)=1\}$. We also define
\[
\Sigma^*=\left\{\frac{L^*_k \ph}{(L^*_k\ph)(q)}\mathrel{}\middle|\mathrel{}\ph\in C_q^*,\ k\in\{i,j\}\right\}.
\]
Then, $\Sigma^*$ is a compact subset in $C^*_q\cap\mathring{C^*}$. By a similar argument as above, there exists (another) $c>0$ such that
\begin{equation}\label{eq:comparable*}
  c^{-1}d_{C^*}(\ph,\ph')\le |\ph-\ph'|\le c d_{C^*}(\ph,\ph'),\quad \ph,\ph'\in \Sigma^*.
\end{equation}

In what follows, positive constants $c$ may vary from line to line.

For $n\in\N$ and $w=w_1 w_2\cdots w_n\in\{i,j\}^n$, we define
\[
L_w=L_{w_n}L_{w_{n-1}}\cdots L_{w_1}
\text{ and }
L^*_w=L_{w_1}^*L_{w_2}^*\cdots L_{w_n}^*.
\]
\begin{lem}\label{lem:rankone}
There exists $c>0$ such that for any $n\in\N$ and $w\in\{i,j\}^n$,
\[
|\rho_w^{-1} L_w y - \ph_w(y)a_w|\le c \tau^n|y|,\quad y\in Y
\]
with $\rho_w=\xi(L_w q)=\xi(L_w q_1)+\xi(L_w q_2)>0$, $a_w=(L_w q_1)/\xi(L_w q_1)\in \Sigma$, and $\ph_w=(L_w^* \xi)/\rho_w\in\Sigma^*$.
Moreover, for $\om\in\{i,j\}^\N$, $\ph_{[\om]_n}$ converges to some $\ph_\om\in\Sigma^*$ and it holds that
\begin{equation}\label{eq:conv}
|\ph_{[\om]_n}-\ph_\om|\le c \tau^n,\quad n\in\N.
\end{equation}
Furthermore, if $\om,\om'\in\{i,j\}^\N$ satisfy $[\om]_m=[\om']_m$ for $m\in\N$,
\begin{equation}\label{eq:diff}
|\ph_\om-\ph_{\om'}|\le c\tau^m.
\end{equation}
\end{lem}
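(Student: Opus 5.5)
The idea is to read off the rank-one approximation from the contraction estimates \Eq{dC} and \Eq{dC*}, using the cone geometry. Everything takes place in the two-dimensional space $Y$, and all constants depend only on $N$, $M$ and $\xi$.

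\emph{Positivity and normalizations.} For $w\in\{i,j\}^n$, the map $L_w$ sends $C\setminus\{0\}$ into $\mathring C$, and $L_w^*$ sends $C^*\setminus\{0\}$ into $\mathring{C^*}$. Hence $\rho_w=\xi(L_wq)=(L_w^*\xi)(q)>0$ and $\xi(L_wq_1)>0$.

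Since the last factor applied is some $L_k$, the vector $L_wq_1/\xi(L_wq_1)$ has the form $L_k y/\xi(L_k y)$ with $y\in C_\xi$. So $a_w\in\Sigma$. Dually, $\ph_w=L_w^*\xi/(L_w^*\xi)(q)$, and the outermost factor of $L_w^*$ is $L_{w_1}^*$. Hence $\ph_w\in\Sigma^*$.

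\emph{Contraction of images.} Iterating \Eq{dC} and using $\diam_{d_C} L_k(C\setminus\{0\})\le\Delta$, I get
\[
\diam_{d_C}L_w(C\setminus\{0\})\le\tau^{n-1}\Delta .
\]
By \Eq{comparable}, every normalized vector $L_wy/\xi(L_wy)$ with $y\in C\setminus\{0\}$ lies within $c\tau^n$ of $a_w$ in Euclidean norm.

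\emph{Rank-one estimate.} Every $y\in Y$ can be written as $y=y_+-y_-$ with $y_\pm\in C$ and $|y_\pm|\le c|y|$, because $C$ has nonempty interior. By linearity it therefore suffices to treat $y\in C$. For such $y$,
\[
\rho_w^{-1}L_wy=\frac{\xi(L_wy)}{\rho_w}\cdot\frac{L_wy}{\xi(L_wy)}=\ph_w(y)\,\frac{L_wy}{\xi(L_wy)},
\]
since $\xi(L_wy)=(L_w^*\xi)(y)=\rho_w\ph_w(y)$. Subtracting $\ph_w(y)a_w$ leaves
\[
\ph_w(y)\Bigl(\frac{L_wy}{\xi(L_wy)}-a_w\Bigr),
\]
whose norm is at most $|\ph_w|\,|y|\,c\tau^n$. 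Here $|\ph_w|$ is bounded because $\Sigma^*$ is compact. This gives the first estimate.

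\emph{Convergence of $\ph_{[\om]_n}$.} The vector $\ph_{[\om]_{n+1}}$ is the normalization of
\[
L^*_{[\om]_n}\bigl(L^*_{\om_{n+1}}\xi\bigr),
\]
while $\ph_{[\om]_n}$ is the normalization of $L^*_{[\om]_n}\xi$. Both lie in the normalized image $L^*_{[\om]_n}(C^*\setminus\{0\})$. Iterating \Eq{dC*} bounds the $d_{C^*}$-diameter of this image by $\tau^{n-1}\Delta^*$. By \Eq{comparable*} this gives $|\ph_{[\om]_{n+1}}-\ph_{[\om]_n}|\le c\tau^n$.

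Summing the geometric series shows that $\{\ph_{[\om]_n}\}_n$ is Cauchy. Its limit $\ph_\om$ lies in $\Sigma^*$ because $\Sigma^*$ is compact, and the tail sum gives \Eq{conv}.

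\emph{Dependence on $\om$.} If $[\om]_m=[\om']_m$, then $\ph_{[\om]_m}=\ph_{[\om']_m}$. Applying \Eq{conv} to both sequences yields \Eq{diff} by the triangle inequality.

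\emph{Main obstacle.} The delicate point is the bookkeeping of the order of composition. The adjoint products $L_w^*$ grow by prepending on the inside, so nested images shrink only in the dual picture. One must check that each normalized point lands in $\Sigma$ or $\Sigma^*$, so that the comparability constants \Eq{comparable} and \Eq{comparable*} apply uniformly in $n$.
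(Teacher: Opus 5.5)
Your proposal is correct and follows essentially the paper's route. The paper likewise reduces to vectors in $C$: it expands $y$ in the extremal-ray basis $q_1,q_2$, so the error becomes a multiple of $a_1-a_2$, which is your cone identity evaluated at $y=q_2$. It controls this through \Eq{dC} and \Eq{comparable}, and gets convergence of $\ph_{[\om]_n}$ from the exponentially shrinking nested dual images via \Eq{dC*} and \Eq{comparable*}. Your derivation of \Eq{diff} from \Eq{conv} by the triangle inequality is only a cosmetic variant of the paper's observation that $\ph_\om$ and $\ph_{\om'}$ both lie in $L^*_w(C^*\setminus\{0\})$.
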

\begin{proof}
Let $n\in\N$ and $w\in\{i,j\}^n$.
For $k=1,2$, we define
\[
  s_k=\xi(L_w q_k)\quad\text{and}\quad a_k=\frac{L_w q_k}{s_k}\in\Sigma.
\]
From \Eq{comparable} and \Eq{dC},
\begin{align}
|a_1-a_2|&\le c d_C(a_1,a_2)
=c d_C(L_w q_1,L_w q_2)\label{eq:a1a2}\\
&\le c \tau^{n-1}\diam_{d_C}\Sigma\nonumber\\
&\le c\tau^n.\nonumber
\end{align}
We note that $a_w=a_1$ and $\ph_w(q_k)=s_k/(s_1+s_2)$ for $k=1,2$.

Any $y\in \R^2\simeq Y$ can be expressed as $y=\a_1 q_1+\a_2 q_2$ for some $\a_1,\a_2\in\R$.
Then,
\begin{align*}
&L_w y-\rho_w\ph_w(y)a_w\\
&=(\a_1 s_1 a_1+\a_2 s_2 a_2)-(s_1+s_2)\left(\a_1\cdot\frac{s_1}{s_1+s_2}+\a_2\cdot\frac{s_2}{s_1+s_2}\right)a_1\\
&=\a_2 s_2(a_1-a_2).
\end{align*}
Therefore,
\begin{align*}
|\rho_w^{-1}L_w y-\ph_w(y)a_w|
&=\left|\frac{\a_2 s_2}{s_1+s_2}(a_1-a_2)\right|\\
&\le|\a_2||a_1-a_2|\\
&\le c|y|\tau^n.\qquad\text{(from \Eq{a1a2})}
\end{align*}
Thus, the first claim follows.

For the second claim, let $w_n=[\om]_n$ for $n\in\N$. 
Since $L^*_{w_{n+1}}(C^*)\subset L^*_{w_{n}}(C^*)$ and $\diam_{d_{C^*}}L^*_{w_{n}}(C^*\setminus\{0\})$ converges to $0$ exponentially as $n\to\infty$, $\ph_{[\om]_n}$ converges to some $\ph_\om\in \Sigma^*$ exponentially fast in $d_{C^*}$. From \Eq{comparable*}, \Eq{conv} follows. This argument also shows that $\ph_\om$ is determined independently of the choice of $\xi$.

Lastly, suppose $\om,\om'\in\{i,j\}^\N$ have the same prefix $w\in\{i,j\}^m$. Then, by the definition of $\ph_\om$ in the above paragraph, both $\ph_\om$ and $\ph_{\om'}$ belong to $L^*_w(C^*\setminus\{0\})$.
Thus, 
\[
d_{C^*}(\ph_\om,\ph_{\om'})\le \diam_{d_{C^*}}L^*_w(C^*\setminus\{0\})
\le c \tau^{m-1}\diam_{d_{C^*}}\Sigma^*
\le c \tau^m.
\]
From \Eq{comparable*}, \Eq{diff} follows.
\end{proof}
We take $w\in W_*$ and $h\in\cH$. Let $u\in\tilde l(V_0)$ be defined as $u=P A_w \iota^{-1}(h)$.
$u$ is uniquely decomposed as $u=y+z$ with $y\in Y$ and $z\in Z$.
For each $k\in S$, let $u_k:=P A_w e_k=y_k+z_k$ with $y_k\in Y$ and $z_k\in Z$.
\begin{lem}\label{lem:decomposition}
Let $n\in\N$ and $w'\in\{i,j\}^n$. Then,
\begin{equation}\label{eq:decomposition}
\frac{\nu_h(K_{ww'})}{\nu(K_{ww'})}
=\frac{|L_{w'}y|^2+(N+3)^{-2n}|z|^2}{\sum_{k\in S}|L_{w'}y_k|^2+(N+3)^{-2n}\sum_{k\in S}|z_k|^2}.
\end{equation}
\end{lem}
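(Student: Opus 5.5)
We compute numerator and denominator separately via \Eq{cellenergy} and compare. By \Eq{cellenergy},
\[
\nu_h(K_{ww'})=2\left(\frac{N+3}{N+1}\right)^{|w|+n}Q_0(A_{w'}A_w\iota^{-1}(h),A_{w'}A_w\iota^{-1}(h)).
\]
In the denominator $\nu(K_{ww'})=\sum_{k\in S}\nu_{h_k}(K_{ww'})$, the vector $\iota^{-1}(h)$ is replaced by $e_k$. The common prefactor $2((N+3)/(N+1))^{|w|+n}$ cancels in the ratio. It therefore suffices to show, for every $v\in l(V_0)$ with $PA_wv=y'+z'$ ($y'\in Y$, $z'\in Z$), that
\[
Q_0(A_{w'}A_wv,A_{w'}A_wv)=c_N\left(|L_{w'}y'|^2+(N+3)^{-2n}|z'|^2\right)
\]
for a constant $c_N>0$ independent of $v$ and $w'$.

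\textbf{Step 1 (reduction to $P$ and $T$).} Since $-D$ kills constants and acts on $\tilde l(V_0)$ as $(N+1)$ times the identity ($-D=(N+1)I-\bfone\,{}^t\bfone$), we have $Q_0(f,f)=(N+1)|Pf|^2$. Next, $A_k\bfone=\bfone$, so $PA_k=PA_kP=T_k$. Iterating this gives $PA_{w'}A_wv=T_{w'}PA_wv$, where $T_{w'}=T_{w'_n}\cdots T_{w'_1}$ in the same order convention as $A_{w'}$.

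\textbf{Step 2 (block decomposition).} By \Lem{invariant}, $Y$ and $Z$ are invariant under $T_i$ and $T_j$, with $T_k|_Y=L_k$ and $T_k|_Z=(N+3)^{-1}I_Z$. Hence
\[
T_{w'}(y'+z')=L_{w'}y'+(N+3)^{-n}z'.
\]
Since $Y\perp Z$ in $\tilde l(V_0)$ (by definition $Z$ is the orthogonal complement), Pythagoras gives
\[
|T_{w'}(y'+z')|^2=|L_{w'}y'|^2+(N+3)^{-2n}|z'|^2.
\]

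\textbf{Step 3 (the norm on $Y$).} This is the step needing care. The identity requires $|\cdot|$ on $Y$ to be the $l(V_0)$-norm, while $L_{w'}$ was written in $\R^2$ coordinates. We interpret $|L_{w'}y|$ as the norm in $l(V_0)$ of the element of $Y$. This is consistent, since $L_{w'}y\in Y$ is an honest vector of $l(V_0)$ and the matrices in \Eq{expression_L} are only its coordinate representation. With this reading, Steps 1 and 2 give the claim with $c_N=N+1$, which cancels between numerator and denominator. We then apply the result with $v=\iota^{-1}(h)$ to get $y,z$, and with $v=e_k$ to get $y_k,z_k$; summing over $k$ in the denominator yields \Eq{decomposition}.
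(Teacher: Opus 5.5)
Your proof is correct and is essentially the paper's argument: apply \Eq{cellenergy}, use $-D|_{\tilde l(V_0)}=(N+1)I$ together with $PA_k=PA_kP=T_k$ and the invariance of $Y$ and $Z$ from \Lem{invariant}, then cancel the common factor $2(N+1)\left(\frac{N+3}{N+1}\right)^{|w|+n}$. Your reading of $|L_{w'}y|$ as the $l(V_0)$-norm of $T_{w'}y\in Y$ is the one the paper uses implicitly, since it writes $T_{w'}|_Y=L_{w'}$; it is also the only reading under which the identity holds, because the $l(V_0)$-norm on $Y$ is not the Euclidean norm in the coordinates $(u(p_i),u(p_j))$.
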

\begin{proof}
From \Eq{cellenergy},
\begin{align*}
\nu_h(K_{ww'})&=2\left(\frac{N+3}{N+1}\right)^{|w|+n}\cE(\psi_{w'}^*\psi_w^* h,\psi_{w'}^*\psi_w^* h)\\
&=2\left(\frac{N+3}{N+1}\right)^{|w|+n}(T_{w'}u,-D T_{w'}u),
\end{align*}
where $T_{w'}=T_{w'_n}T_{w'_{n-1}}\cdots T_{w'_1}$ with $w'=w'_1w'_2\cdots w'_n$.
Since $-D|_{\tilde l(V_0)}=(N+1)I_{\tilde l(V_0)}$, $T_w'|_Y=L_{w'}$, and $T_{w'}|_Z=(N+3)^{-n}I_Z$ (from \Lem{invariant}),
\[
\nu_h(K_{ww'})=2(N+1)\left(\frac{N+3}{N+1}\right)^{|w|+n}\left\{|L_{w'}y|^2+(N+3)^{-2n}|z|^2\right\}.
\]
In the same way, we obtain that for each $k\in S$,
\[
\nu_{\iota(e_k)}(K_{ww'})=2(N+1)\left(\frac{N+3}{N+1}\right)^{|w|+n}\left\{|L_{w'}y_k|^2+(N+3)^{-2n}|z_k|^2\right\}.
\]
Thus, \Eq{decomposition} holds.
\end{proof}
We define a quadratic form $B_w\colon Y^*\times Y^*\to\R$ as
\[
B_w(\ph,\ph')=\sum_{k\in S}\ph(y_k)\ph'(y_k).
\]
\begin{lem}\label{lem:nondegeneracy}
$B_w$ is strictly positive-definite.
\end{lem}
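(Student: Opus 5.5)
The plan is to reduce positive-definiteness to the statement that the vectors $y_1,\dots,y_{N+1}$ span $Y$. Indeed, $B_w(\ph,\ph)=\sum_{k\in S}\ph(y_k)^2\ge0$ for every $\ph\in Y^*$, and equality holds if and only if $\ph$ vanishes on every $y_k$. So if the $y_k$ span $Y$, then $B_w(\ph,\ph)=0$ forces $\ph=0$, which is exactly strict positive-definiteness.

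To show that the $y_k$ span $Y$, I would let $\Pi_Y$ denote the orthogonal projection of $\tilde l(V_0)$ onto $Y$ along $Z$. By definition, $y_k=\Pi_Y u_k=\Pi_Y P A_w e_k$. Since $\{e_k\}_{k\in S}$ is a basis of $l(V_0)$, linearity gives
\[
\operatorname{span}\{y_k\mid k\in S\}=\Pi_Y P A_w\bigl(l(V_0)\bigr).
\]
Next I would invoke the invertibility of $A_w$, which is used already in \Prop{4} and in Step~2 of the proof of \Thm{main1a}; it holds because each $A_k$ has nonzero eigenvalues $1$, $(N+1)/(N+3)$, $1/(N+3)$ by \Lem{eigen}. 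Thus $A_w(l(V_0))=l(V_0)$. Then $P$ maps $l(V_0)$ onto $\tilde l(V_0)$, and $\Pi_Y$ maps $\tilde l(V_0)$ onto $Y$. Composing these surjections, the span of the $y_k$ is all of $Y$.

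I do not expect a real obstacle here. The only point to check is that $y_k$ really is the $Y$-component of $PA_we_k$ in the orthogonal decomposition $\tilde l(V_0)=Y\oplus Z$, which holds by construction. If one wants a quantitative statement, note that $B_w(\ph,\ph)=|{}^t\!(\Pi_YPA_w)\ph|^2$ under the Euclidean identification. This shows $B_w$ is the Gram form of a surjective linear map, hence nondegenerate, with smallest eigenvalue bounded below by the square of the smallest singular value of $\Pi_YPA_w$ restricted to $Y$.
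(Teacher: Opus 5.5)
Your proof is correct and is essentially the paper's argument in dual form: the paper takes $\ph$ with $\ph(y_k)=0$ for all $k$, extends it to $\hat\ph\in(l(V_0))^*$ vanishing on $Z$ and on $\bfone$, notes that $(A_w^*\hat\ph)(e_k)=\hat\ph(PA_we_k)=0$ for every $k$, and concludes $\hat\ph=0$ (hence $\ph=0$) from the invertibility of $A_w$. That is exactly the transpose of your observation that $\Pi_Y P A_w$ maps $l(V_0)$ onto $Y$, so the two routes coincide up to duality.
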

\begin{proof}
Suppose that $B_w(\ph,\ph)=0$ for $\ph\in Y^*$.
Then, $\ph(y_k)=0$ for all $k\in S$.
We define $\hat\ph\in (l(V_0))^*$ so that $\hat\ph=\ph$ on $Y$, $\hat\ph=0$ on $Z$, and $\hat\ph\bfone=0$.
Then, for each $k\in S$,
\[
0=\ph(y_k)=\hat\ph(P A_w e_k)=\hat\ph(A_w e_k)=(A_w^* \hat\ph)(e_k).
\]
Therefore, $A_w^* \hat\ph=0$.
Since $A_w^*$ is invertible, $\hat\ph=0$. This implies that $\ph=0$.
\end{proof}
\begin{prop}\label{prop:limit}
Let $\om=\om_1\om_2\cdots\om_n\cdots\in\{i,j\}^\N$.
Then, the following identity holds.
\begin{equation}\label{eq:limit}
\dl_{h,w}(\om):=\lim_{n\to\infty}\frac{\nu_h(K_{w[\om]_n})}{\nu(K_{w[\om]_n})}=\frac{\ph_\om(y)^2}{B_w(\ph_\om,\ph_\om)},
\end{equation}
where $\ph_\om$ is provided in \Lem{rankone}.
Moreover, there exists $c>0$ independent of $\om$ such that
\[
  \sup_{\om\in\{i,j\}^\N}\left|\frac{\nu_h(K_{w[\om]_n})}{\nu(K_{w[\om]_n})}-\dl_{h,w}(\om)\right|\le c\tau^n,\quad n\in\N.
\]
\end{prop}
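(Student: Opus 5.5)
We will start from the exact expression of \Lem{decomposition} and divide numerator and denominator by $\rho_{w'}^2$, where $w'=[\om]_n$ and $\rho_{w'}=\xi(L_{w'}q)$. We then apply \Lem{rankone}, which gives
\[
\rho_{w'}^{-1}L_{w'}y=\ph_{w'}(y)a_{w'}+O(\tau^n),\qquad
\rho_{w'}^{-1}L_{w'}y_k=\ph_{w'}(y_k)a_{w'}+O(\tau^n),
\]
with constants uniform in $w'$ and depending only on $|y|,|y_k|$. Since $a_{w'}\in\Sigma$ and $\Sigma$ is compact and avoids $0$, $|a_{w'}|$ is bounded above and below by positive constants. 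So the numerator becomes $\ph_{w'}(y)^2|a_{w'}|^2+O(\tau^n)$, and the $Y$-part of the denominator becomes $B_w(\ph_{w'},\ph_{w'})|a_{w'}|^2+O(\tau^n)$.

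The $Z$-terms carry the factor $(N+3)^{-2n}\rho_{w'}^{-2}$, and we must show this is $O(\tau^n)$. We will bound $\rho_{w'}$ from below. Since $q\in\mathring C$ and $\xi\in\mathring{C^*}$, and the entries of $L_i,L_j$ are explicit, the smallest eigenvalue $1/(N+3)$ belongs to a direction outside $C$. We expect $\xi(L_kv)\ge c'\xi(v)$ for $v\in C$ with some $c'>1/(N+3)$. Indeed, in the projective coordinate the image cone lies strictly inside, and a direct check on the extreme rays $q_1,q_2$ of $C$ should give a ratio near $(N+1)/(N+3)$ on the relevant part. Failing a clean bound, we will use $\rho_{w'}\ge c\,\|L_{w'}|_C\|\ge c\,(N+3)^{-n}\cdot$(growth), and compare with the Perron-type rate. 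Then $(N+3)^{-2n}\rho_{w'}^{-2}\le c\,\theta^n$ with $\theta<1$. Replacing $\tau$ by $\max(\tau,\theta)$ would only weaken the statement, so we need $\theta\le\tau$ or a sharper argument. The natural route is to note that $(N+3)\rho_{w'}\ge c\,(N+1)^{\#}$, and we expect $\theta=((N+3)c')^{-2}$, which is at most $(N+1)^{-2}\le\tau$, since $\tau\ge 1/(N+1)$ can be checked directly. This lower bound on $\rho_{w'}$ is the step we expect to be the main obstacle.

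Next, the denominator's leading term $B_w(\ph_{w'},\ph_{w'})|a_{w'}|^2$ is bounded below uniformly. By \Lem{nondegeneracy}, $B_w$ is positive definite, and $\ph_{w'}$ ranges over the compact set $\Sigma^*$, which excludes $0$. The ratio is therefore a smooth function of bounded arguments, and the common factor $|a_{w'}|^2$ cancels. This gives
\[
\frac{\nu_h(K_{w[\om]_n})}{\nu(K_{w[\om]_n})}=\frac{\ph_{[\om]_n}(y)^2}{B_w(\ph_{[\om]_n},\ph_{[\om]_n})}+O(\tau^n)
\]
uniformly in $\om$.

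Finally, \Eq{conv} gives $|\ph_{[\om]_n}-\ph_\om|\le c\tau^n$. The map $\ph\mapsto\ph(y)^2/B_w(\ph,\ph)$ is Lipschitz on a neighborhood of $\Sigma^*$ in which $B_w$ is bounded below, and $\Sigma^*$ is compact. Hence we may replace $\ph_{[\om]_n}$ by $\ph_\om$ at a cost of $O(\tau^n)$. This yields both the existence of the limit \Eq{limit} and the uniform rate $c\tau^n$.
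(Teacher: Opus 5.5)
Your outline matches the paper's: normalize by $\rho_{[\om]_n}^2$, apply the rank-one approximation of \Lem{rankone}, cancel $|a_{[\om]_n}|^2$, use $\inf_{\Sigma^*}B_w(\ph,\ph)>0$ from \Lem{nondegeneracy}, and finish with the Lipschitz property of $F_w$ and \Eq{conv}. However, the step you flag as the main obstacle is left unresolved, and the bound you propose for it is false. You need $(N+3)^{-2n}\rho_{[\om]_n}^{-2}=O(\tau^n)$ uniformly in $\om$. You expect $\xi(L_kv)\ge c'\xi(v)$ on $C$ with $(N+3)c'\ge N+1$, i.e.\ $\rho_{w'}\ge c\{(N+1)/(N+3)\}^n$. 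No $\xi\in\mathring{C^*}$ gives this. For $w'=(ij)^m$ one has $L_{w'}=(N+3)^{-2m}B^m$ with $B=\bar L_j\bar L_i$ as in \Sec{main4}. Since $q\in\mathring C$ has a nonzero component along the Perron eigenvector, $\rho_{w'}\asymp\{\lm_+/(N+3)^2\}^m$, and $\lm_+=(2N+3+\sqrt{4N+5})/2<(N+1)^2$. Your fallback ``$\rho_{w'}\ge c\,(N+3)^{-n}\cdot$(growth)'' is never turned into an actual exponential rate. So the $Z$-terms are not controlled, and the rate $c\tau^n$ is not established.

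The missing ingredient, which the paper uses, is a determinant bound:
\[
\rho_{w'}=\xi(L_{w'}q_1)+\xi(L_{w'}q_2)\ge c\bigl(|L_{w'}q_1|+|L_{w'}q_2|\bigr)\ge c\|L_{w'}\|_{\mathrm{op}}\ge c\,|\det L_{w'}|^{1/2}=c\Bigl(\frac{N+1}{(N+3)^2}\Bigr)^{n/2}.
\]
The first inequality holds because $\xi\in\mathring{C^*}$ satisfies $\xi(v)\ge c|v|$ on $C$. The second holds because $q_1,q_2$ span $Y$. The third uses $\|X\|_{\mathrm{op}}^2\ge|\det X|$ for $2\times2$ matrices. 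Hence $(N+3)^{-2n}\rho_{w'}^{-2}=O((N+1)^{-n})=O(\tau^n)$, since $\tau\ge 1/(N+1)$.

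Your one-step cone idea can also be rescued, but only with a smaller constant than you expected. Take $\xi_0(\a,\b)=\a-\b$, which is comparable to $\xi$ on $C$. Checking the extreme rays $q_1,q_2$ gives $\xi_0((N+3)L_kv)\ge\frac{M+N+2}{M+1}\,\xi_0(v)$ for $v\in C$ and $k=i,j$. Since $M<2N$, this constant exceeds $3/2$. Also $(2/3)^2<\tau$, because $\tau$ increases with $N$ and equals $(\sqrt{13}-1)/(\sqrt{13}+1)>4/9$ at $N=2$. Either way, this explicit comparison with $\tau$ is exactly what your proposal is missing.
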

\begin{proof}
In the following, the constant in the error estimate can be chosen independently of $\om$.
From \Lem{rankone}, as $n\to\infty$,
\[
  L_{[\om]_n} y=\rho_{[\om]_n}\ph_{[\om]_n}(y)a_{[\om]_n}+O(\rho_{[\om]_n}\tau^n|y|).
\]
Since $\inf_n|a_{[\om]_n}|>0$,
\begin{align*}
|L_{[\om]_n} y|^2&=\rho_{[\om]_n}^2 |a_{[\om]_n}|^2 (\ph_{[\om]_n}(y)+O(\tau^n))^2\\
&=\rho_{[\om]_n}^2 |a_{[\om]_n}|^2 (\ph_{[\om]_n}(y)^2+O(\tau^n)).\end{align*}
In the same way, we have
\begin{equation}\label{eq:sum}
  \sum_{k\in S}|L_{[\om]_n} y_k|^2=\rho_{[\om]_n}^2 |a_{[\om]_n}|^2 (B_w(\ph_{[\om]_n},\ph_{[\om]_n})+O(\tau^n)).
\end{equation}
From \Lem{nondegeneracy}, $\inf_{\ph\in\Sigma^*}B_w(\ph,\ph)>0$. Therefore, the growth order of the right-hand side of \Eq{sum} is equal to that of $\rho_{[\om]_n}^2 |a_{[\om]_n}|^2$.

Now, we have
\begin{align*}
\rho_{[\om]_n}&=\xi(L_{[\om]_n}q_1)+\xi(L_{[\om]_n}q_2)\\
&\ge c (|L_{[\om]_n}q_1|+|L_{[\om]_n}q_2|)\\
&\ge c \|L_{[\om]_n}\|_{\mathrm{op}}\\
&\ge c\left|\det L_{[\om]_n}\right|^{1/2}\\
&=c\left\{\frac{N+1}{(N+3)^2}\right\}^{n/2}.
\end{align*}
Here, in the fourth line we used the inequality $\|X\|_{\mathrm{op}}^2\ge\left|\det X\right|$ for any matrices $X$ of size $2$. In the last line, we used the identity $\det L_k=(N+1)/(N+3)^2$ for $k\in\{i,j\}$.
Thus,
\[
\frac{(N+3)^{-2n}}{\rho_{[\om]_n}^2}=O((N+1)^{-n}),
\]
which is $O(\tau^n)$ because we can verify that $\tau\ge1/(N+1)$. 
From these estimates and \Lem{decomposition},
\begin{align*}
\frac{\nu_h(K_{w[\om]_n})}{\nu(K_{w[\om]_n})}
&=\frac{\rho_{[\om]_n}^2|a_{[\om]_n}|^2\left(\ph_{[\om]_n}(y)^2+O(\tau^n)\right)}{\rho_{[\om]_n}^2|a_{[\om]_n}|^2\left(\sum_{k\in S}\ph_{[\om]_n}(y_k)^2+O(\tau^n)\right)}\\
&=\frac{\ph_{[\om]_n}(y)^2+O(\tau^n)}{B_w(\ph_{[\om]_n},\ph_{[\om]_n})+O(\tau^n)}.
\end{align*}
Since $\inf_{\ph\in\Sigma^*}B_w(\ph,\ph)>0$ from \Lem{nondegeneracy},
\begin{align*}
\left|\frac{\nu_h(K_{w[\om]_n})}{\nu(K_{w[\om]_n})}-\frac{\ph_{[\om]_n}(y)^2}{B_w(\ph_{[\om]_n},\ph_{[\om]_n})}\right|
&=\frac{\left(\ph_{[\om]_n}(y)^2+B_w(\ph_{[\om]_n},\ph_{[\om]_n})\right)O(\tau^n)}{(B_w(\ph_{[\om]_n},\ph_{[\om]_n})+O(\tau^n))B_w(\ph_{[\om]_n},\ph_{[\om]_n})}\\
&=O(\tau^n).
\end{align*}
Since the function
\begin{equation}\label{eq:Fw}
F_w(\ph):=\frac{\ph(y)^2}{B_w(\ph,\ph)}, \quad \ph\in\Sigma^*
\end{equation}
is Lipschitz continuous on $\Sigma^*$,
\[
\left|\frac{\ph_{[\om]_n}(y)^2}{B_w(\ph_{[\om]_n},\ph_{[\om]_n})}-\frac{\ph_{\om}(y)^2}{B_w(\ph_{\om},\ph_{\om})}\right|
\le c|\ph_{[\om]_n}-\ph_{\om}|
\le c\tau^n,
\]
where the last inequality follows from \Eq{conv}.
By combining these estimates, the claim holds.
\end{proof}
\begin{lem}\label{lem:conti}
There exists $c>0$ such that the following holds: if $\om,\om'\in\{i,j\}^\N$ satisfy $[\om]_m=[\om']_m$ for some $m\in\Z_+$, then
\begin{equation}\label{eq:dlhom}
|\dl_{h,w}(\om)-\dl_{h,w}(\om')|\le c\tau^m.
\end{equation}
\end{lem}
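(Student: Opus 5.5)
The plan is to combine the explicit formula \Eq{limit} from \Prop{limit} with the estimate \Eq{diff} from \Lem{rankone}. By \Prop{limit}, we have $\dl_{h,w}(\om)=F_w(\ph_\om)$ and $\dl_{h,w}(\om')=F_w(\ph_{\om'})$, where $F_w$ is the function defined in \Eq{Fw} on $\Sigma^*$. The first task is to confirm that $\ph_\om$ and $\ph_{\om'}$ both lie in $\Sigma^*$. This is part of the conclusion of \Lem{rankone}: each $\ph_{[\om]_n}$ lies in $\Sigma^*$, and $\Sigma^*$ is compact and hence closed, so the limit $\ph_\om$ lies there too.

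Next I would make the Lipschitz continuity of $F_w$ on $\Sigma^*$ explicit, since it was only asserted in the proof of \Prop{limit}. Both the numerator $\ph\mapsto\ph(y)^2$ and the denominator $\ph\mapsto B_w(\ph,\ph)$ are quadratic polynomials in $\ph\in Y^*$, so they are smooth. By \Lem{nondegeneracy}, $B_w$ is strictly positive-definite, so its infimum over the compact set $\Sigma^*$ is positive. Because $\Sigma^*\subset C_q^*$ is compact, and hence bounded, a quotient of smooth functions with the denominator bounded below is Lipschitz on a bounded set. (The set need not be convex; one can argue on the convex hull of $\Sigma^*$, which is also compact and contained in $C_q^*\cap\mathring{C^*}$, where $B_w$ is still bounded below by positive-definiteness.) Call the resulting Lipschitz constant $c_1$; it depends on $h$ and $w$ but not on $\om$.

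Finally, if $[\om]_m=[\om']_m$, then \Eq{diff} gives $|\ph_\om-\ph_{\om'}|\le c\tau^m$. Hence
\[
|\dl_{h,w}(\om)-\dl_{h,w}(\om')|\le c_1|\ph_\om-\ph_{\om'}|\le c_1c\,\tau^m .
\]
For $m=0$ the estimate is trivial by boundedness of $F_w$ on $\Sigma^*$, which may require enlarging the constant. The only real point of care is the uniformity of the constants in $\om$, and this holds because $\Sigma^*$ and $B_w$ do not depend on $\om$.
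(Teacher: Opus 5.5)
Your proposal is correct and follows the paper's proof: write $\dl_{h,w}(\om)=F_w(\ph_\om)$ via \Eq{limit}, combine the Lipschitz continuity of $F_w$ on $\Sigma^*$ with \Eq{diff}, and handle $m=0$ by the boundedness of $\dl_{h,w}$ after enlarging $c$. The paper only asserts the Lipschitz property of $F_w$, and your justification of it supplies that detail: you pass to the convex hull of $\Sigma^*$, which lies in $C_q^*\cap\mathring{C^*}$, and use the positive-definiteness of $B_w$ from \Lem{nondegeneracy}.
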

\begin{proof}
The Lipschitz continuity of $F_w$ defined in \Eq{Fw} on $\Sigma^*$, \Eq{limit}, and \Eq{diff} imply that \Eq{dlhom} follows for $m\in\N$ for some suitable $c>0$.
From the boundedness of $\dl_{h,w}$, \Eq{dlhom} holds also for $m=0$ by choosing $c$ larger.
\end{proof}
Recall the map $\Psi_w$ defined before the statement of \Thm{main3}.
\begin{lem}\label{lem:coincidence}
If $\Psi_w(\om)=\Psi_w(\om')$ for $\om,\om'\in\{i,j\}^\N$, then $\dl_{h,w}(\om)=\dl_{h,w}(\om')$.
\end{lem}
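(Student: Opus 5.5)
The plan is to identify the pairs of symbolic codes that represent the same point and then compute both limits explicitly with \Lem{3}. Suppose $\om\ne\om'$ and $\Psi_w(\om)=\Psi_w(\om')$. Comparing the nested cells $K_{w[\om]_n}$ and $K_{w[\om']_n}$ along the edge $\psi_w(\overline{p_ip_j})$, which correspond to dyadic subintervals, gives
\[
\{\om,\om'\}=\{v\,i\,j^\infty,\ v\,j\,i^\infty\}\quad\text{for some } v\in\{i,j\}^m,\ m\in\Z_+.
\]
This is the usual binary-expansion ambiguity at dyadic points. So it suffices to show $\dl_{h,w}(vij^\infty)=\dl_{h,w}(vji^\infty)$.

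By \Prop{limit} both limits exist, so each may be computed along the cells $K_{wvij^n}$ and $K_{wvji^n}$ respectively. Put $u=\iota^{-1}(h)$ and $x=A_{wv}u$. By \Lem{3}, applied to $h$ and to each $h_k=\iota(e_k)$ with the word $wvi$ and the letter $j$, the factors $\frac2N\left(\frac{N+3}{N+1}\right)^{|wvi|}$ and $\left(\frac{N+3}{N+1}\right)^n$ cancel. The denominator limit is nonzero by \Prop{4}. Using $A_{wvi}=A_iA_{wv}$, this gives
\[
\dl_{h,w}(vij^\infty)=\frac{(d_j,A_iA_{wv}u)^2}{\sum_{k\in S}(d_j,A_iA_{wv}e_k)^2}.
\]
In the same way,
\[
\dl_{h,w}(vji^\infty)=\frac{(d_i,A_jA_{wv}u)^2}{\sum_{k\in S}(d_i,A_jA_{wv}e_k)^2}.
\]

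The key step is then the linear identity
\[
(d_j,A_ix)=-(d_i,A_jx)\qquad\text{for all }x\in l(V_0).
\]
This is the matching condition of normal derivatives at the junction point $p_{ij}$. To verify it, write $x={}^t(\a_1,\dots,\a_{N+1})$. Then $(d_j,A_ix)=\sum_k(A_ix)_k-(N+1)(A_ix)_j=(N+1)(s_i-\a_{ij})$, and likewise $(d_i,A_jx)=(N+1)(s_j-\a_{ij})$. By \Eq{aij} and \Eq{si},
\[
s_i+s_j-2\a_{ij}=\frac{1}{N+3}\bigl\{2(N+1)s+2\a_i+2\a_j-2(N+1)s-2\a_i-2\a_j\bigr\}=0.
\]
Applying the identity with $x=A_{wv}u$ and with $x=A_{wv}e_k$ for each $k$, the numerators agree after squaring, and so do the denominators term by term. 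Hence the two values coincide.

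The only delicate point is the combinatorial description of $\Psi_w^{-1}(x)$. The two codes of a point can only differ by the swap $ij^\infty\leftrightarrow ji^\infty$ after a common prefix. I would justify this by noting that $K_{wvi}\cap K_{wvj}=\{\psi_{wv}(p_{ij})\}$, while distinct cells of the same level meet in at most one vertex.
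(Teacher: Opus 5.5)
Your proposal is correct and follows essentially the same route as the paper. Both reduce to the pair $vij^\infty$, $vji^\infty$, evaluate the two limits with \Lem{3} (using \Prop{4} for the nonzero denominator), and conclude from the identity ${}^t\!A_id_j=-{}^t\!A_jd_i$; your check $s_i+s_j-2\a_{ij}=0$ verifies that identity, and you also justify the reduction to this dyadic coding ambiguity, which the paper leaves implicit.
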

\begin{proof}
It suffices to consider the case that
$\om=\eta ij^\infty$ and $\om'=\eta ji^\infty$ for some $\eta\in W_*$.
Let $u=\iota^{-1}(h)\in l(V_0)$. From \Lem{3},
\begin{equation}\label{eq:2.2-1}
\dl_{h,w}(\om)
=\lim_{n\to\infty}\frac{\nu_h(K_{w\eta i j^n})}{\nu(K_{w\eta i j^n})}
=\frac{(d_j,A_{w\eta i}u)^2}{\sum_{k\in S}(d_j,A_{w\eta i} e_k)^2}=\frac{({}^t\!A_id_j,A_{w\eta}u)^2}{\sum_{k\in S}({}^t\!A_id_j,A_{w\eta} e_k)^2}
\end{equation}
and
\begin{equation}\label{eq:2.2-2}
\dl_{h,w}(\om')
=\lim_{n\to\infty}\frac{\nu_h(K_{w\eta j i^n})}{\nu(K_{w\eta j i^n})}
=\frac{({}^t\!A_jd_i,A_{w\eta}u)^2}{\sum_{k\in S}({}^t\!A_jd_i,A_{w\eta} e_k)^2}.
\end{equation}
Since a direct computation shows that ${}^t\!A_id_j=-{}^t\!A_jd_i$, the rightmost expressions of \Eq{2.2-1} and \Eq{2.2-2} coincide.
\end{proof}
From this lemma, the function $\frac{\dl \nu_h}{\dl \nu}(x):=\dl_{h,w}(\om)$ for $x=\Psi_w(\om)$, $x\in\psi_w(\overline{p_ip_j})$ is well-defined.
\begin{proof}[Proof of \Thm{main3}.]
It remains to prove the $\gm_N$-H\"older continuity of $\frac{\dl \nu_h}{\dl \nu}$.
Take any distinct $x,x'\in \psi_w(\overline{p_i p_j})$.
There exist a unique integer $m$ such that
\begin{equation}\label{eq:compare}
  2^{-(m+1)}<2^{|w|}\frac{|x-x'|}{|p_i-p_j|}\le 2^{-m}.
\end{equation}
We can take $x_0=x$, $x_1$, $x_2=x'$ so that $x_1$ is one of the points dividing the line segment $\psi_w(\overline{p_i p_j})$ into $2^m$ equal parts, and $|x_{l+1}-x_l|\le 2^{-m-|w|}|p_i-p_j|$ for $l=0,1$.
For each $l=0,1$, we can take $\om,\om'\in\{i,j\}^\N$ such that $x_l=\Psi_w(\om)$, $x_{l+1}=\Psi_w(\om')$ and $[\om]_m=[\om']_m$.
From \Lem{conti},
\[
  \left|\frac{\dl \nu_h}{\dl \nu}(x_{l+1})-\frac{\dl \nu_h}{\dl \nu}(x_l)\right|\le c\tau^m=c 2^{-\gm_N m}.
\]
Therefore,
\[
  \left|\frac{\dl \nu_h}{\dl \nu}(x)-\frac{\dl \nu_h}{\dl \nu}(x')\right|
  \le \sum_{l=0}^1 \left|\frac{\dl \nu_h}{\dl \nu}(x_{l+1})-\frac{\dl \nu_h}{\dl \nu}(x_l)\right|\le 2c 2^{-\gm_N m}
    \le c'|x-x'|^{\gm_N}
\]
with another constant $c'>0$.
Here, the last inequality follows from \Eq{compare}.
This proves the claim.
\end{proof}

\section{Proof of Theorem~\protect\ref{th:main4}}\label{sec:main4}
Let $h\in\cH$, $w\in W_*$, and $i,j\in S$ be distinct elements.
We investigate the behavior of $\dl_{h,w}$ defined in \Eq{limit} around $w v \chi$ for $v\in\{i,j\}^m$ with some $m\in\Z_+$ and $\chi:=(ij)^\infty\in\{i,j\}^\N$. Recall \Lem{rankone} and its proof. $\dl_{h,w}(\om)$ is identified by using $\ph_\om$, and $\ph_\om$ is given by the limit of the normalization of $L^*_{[\om]_n}\xi$ for a fixed $\xi\in \mathring{C^*}$. Therefore, what are important are the matrices $L_i$ and $L_j$ in \Eq{expression_L} modulo multiplicative constants. Keeping this observation in mind, we define matrices
\[
\bar L_i=\begin{pmatrix} N+1 & 0\\-1& 1\end{pmatrix}
\quad\text{and}\quad
\bar L_j=\begin{pmatrix} 1 & -1\\0& N+1\end{pmatrix}
\]
and
\[
  B:=\bar L_j \bar L_i=\begin{pmatrix} N+2 & -1\\-(N+1)& N+1\end{pmatrix}.
\]
The eigenvalues $\lm_\pm$ of $B$ are given by
\[
  \lm_\pm=\frac{2N+3\pm\sqrt{4N+5}}{2}.
\]
Let $\theta=\lm_-/\lm_+$. A direct calculation shows that
\begin{equation}\label{eq:theta}
\sqrt{\theta}=2^{-\gm_N}.
\end{equation}
Thus $\theta=\tau^2$, where $\tau$ is the contraction ratio appearing in \Sec{main3}.

The eigenvectors of ${}^t\!B$ corresponding to $\lm_+$ and $\lm_-$ are given by
\[
\ell_+:=\begin{pmatrix}\sqrt{4N+5}+1\\-2\end{pmatrix}\quad\text{and}\quad
\ell_-:=\begin{pmatrix}\sqrt{4N+5}-1\\2\end{pmatrix},
\]
respectively.
We regard $\ell_\pm$ as elements of $Y^*$ by the identification $Y^*\simeq(\R^2)^*\simeq\R^2$.
From the construction of $\ph_\chi$ (\Lem{rankone}), $\ph_\chi$ coincides with $\ell_+$ up to a multiplicative constant.

Let $u=PA_w\iota^{-1}(h)\in\tilde l(V_0)$ and write $u=y+z$ with $y\in Y$ and $z\in Z$.
Recall the function $F_w$ defined in \Eq{Fw}.
We will also regard $F_w$ as a smooth homogeneous function on $Y^*\setminus\{0\}$.
This satisfies the relation $F_w(t\ph)=F_w(\ph)$ for all $t>0$ and $\ph\in Y^*\setminus\{0\}$.
In particular, we can consider the differentiation $\partial_{\eta}F_w$ of $F_w$ along $\eta\in Y^*$.

Let $m\in\Z_+$ and $v=v_1v_2\cdots v_m\in\{i,j\}^m$.  We set
\[
    \bar L_v=\bar L_{v_m}\bar L_{v_{m-1}}\cdots \bar L_{v_1},
    \qquad
    \ell_+^v={}^t\!\bar L_v\ell_+,
    \qquad
    \ell_-^v={}^t\!\bar L_v\ell_-.
\]
For $v=\emptyset$, we understand that
$\ell_+^\emptyset=\ell_+$ and $\ell_-^\emptyset=\ell_-$.
\begin{lem}\label{lem:optimality}
Suppose that 
\begin{equation}\label{eq:nondeg}
\partial_{\ell_-^v}F_w(\ell_+^v)\ne0.
\end{equation}
Let $x=\Psi_w(v\chi)\in\psi_w(\overline{p_i p_j})$.
Then, there exist a constant $c>0$ and a sequence $\{x_n\}_{n=1}^\infty$ of $\psi_w(\overline{p_i p_j})\setminus\{x\}$ converging to $x$ such that 
\begin{equation}\label{eq:optimality}
  \left|\frac{\dl\nu_h}{\dl\nu}(x_n)-\frac{\dl\nu_h}{\dl\nu}(x)\right|
  \ge c|x_n-x|^{\gm_N}, \quad n\in\N.
\end{equation}
In particular, \Eq{reverse} holds.
\end{lem}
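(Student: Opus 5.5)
We take the comparison points $x_n:=\Psi_w(v\chi')$ with $\chi'$ obtained from $\chi=(ij)^\infty$ by following $(ij)^n$ and then switching to a fixed different tail. The natural choice is $x_n=\Psi_w(v(ij)^n j^\infty)$, say, or $v(ij)^n\,\zeta$ with a fixed $\zeta\ne\chi$. Since $[v\chi]_{m+2n}=[v(ij)^n\zeta]_{m+2n}$, the two points lie in a common edge piece of length $2^{-|w|-m-2n}|p_i-p_j|$. Moreover $|x_n-x|\ge c\,2^{-2n}$ as long as $\Psi(\zeta)\ne\Psi(\chi)$ on the unit edge; note that $\Psi(\chi)$ is the $1{:}2$ point, so for instance $\zeta=j^\infty$ works. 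Hence $|x_n-x|^{\gm_N}\asymp 2^{-2n\gm_N}=\theta^{n}=\tau^{2n}$ by \Eq{theta}. It then suffices to show
\[
|\dl_{h,w}(v(ij)^n\zeta)-\dl_{h,w}(v\chi)|\ge c\,\theta^n .
\]

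By \Prop{limit}, $\dl_{h,w}(\om)=F_w(\ph_\om)$, and $\ph_\om$ is, up to a positive scalar, $\lim_k {}^t\bar L_{[\om]_k}\xi$. The scalar is harmless because $F_w$ is $0$-homogeneous. Write $\ph_{\zeta}\propto a\ell_++b\ell_-$ in the eigenbasis of ${}^t\!B$. Then, with the ordering convention $L^*_{w}=L^*_{w_1}\cdots L^*_{w_n}$,
\[
\ph_{v(ij)^n\zeta}\propto {}^t\bar L_v\,({}^t\!B)^n(a\ell_++b\ell_-)=a\lm_+^n\ell_+^v+b\lm_-^n\ell_-^v
\propto \ell_+^v+(b/a)\theta^n\ell_-^v .
\]
Similarly, $\ph_{v\chi}\propto\ell_+^v$, which is consistent with $\ph_\chi\propto\ell_+$. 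Here I must check that $a\ne0$. The functional $\ph_\zeta$ lies in $\mathring{C^*}$, while $\ell_-$ is the repelling direction and has mixed sign with respect to the cone. So $\ph_\zeta$ is not proportional to $\ell_-$, and we may take $b\ne0$ by the choice of $\zeta$. For $\zeta=j^\infty$, $\ph_\zeta$ is the eigenvector of ${}^t\bar L_j$, which is not an eigenvector of ${}^t\!B$; I will verify this by the explicit matrices.

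Next, Taylor expand the smooth function $F_w$ on $Y^*\setminus\{0\}$ at $\ell_+^v$:
\[
F_w(\ell_+^v+\eps\ell_-^v)-F_w(\ell_+^v)=\eps\,\partial_{\ell_-^v}F_w(\ell_+^v)+O(\eps^2),\qquad \eps=(b/a)\theta^n .
\]
Under hypothesis \Eq{nondeg}, for large $n$ this is at least $c\,\theta^n$. Combined with the distance comparison, this gives \Eq{optimality} after discarding finitely many $n$ and reindexing. Then \Eq{reverse} follows, since $x_n\to x$.

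The main obstacle is bookkeeping. First, the ordering of the products, $L_w$ versus $L^*_w$, must correctly yield ${}^t\bar L_v({}^t\!B)^n$ rather than another conjugate. I will check this against $\ph_\chi\propto\ell_+$ as stated. Second, the lower bound $|x_n-x|\ge c\,2^{-2n}$ is needed, which comes from the self-similarity of $\Psi_w$. Third, $b\ne0$ must hold for the chosen tail. If $j^\infty$ fails either condition, I will instead try $i^\infty$ or $(ji)^\infty$-type tails.
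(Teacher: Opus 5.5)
Your proposal is correct and is essentially the paper's proof. The paper likewise takes $x_n=\Psi_w(v(ij)^n\zeta)$ for a fixed tail $\zeta$, writes $\ph_{v(ij)^n\zeta}\propto a\lm_+^n\ell_+^v+b\lm_-^n\ell_-^v$, and Taylor-expands the $0$-homogeneous $F_w$ at $\ell_+^v$ under \Eq{nondeg}. The only difference is the tail: the paper uses $(ji)^\infty$, so that $x$ and $x_n$ are the two trisection points of $\psi_{wv(ij)^n}(\overline{p_ip_j})$ and $a,b$ are computed explicitly, whereas your $j^\infty$ works equally well, since $\ph_{j^\infty}\propto(0,1)$ gives $a=-(\sqrt{4N+5}-1)/(4\sqrt{4N+5})\neq0$ and $b=(\sqrt{4N+5}+1)/(4\sqrt{4N+5})\neq0$.
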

\begin{proof}
Let $\check\chi=(ji)^\infty\in\{i,j\}^\N$.
We define
\[
  \check B:=\bar L_i \bar L_j=\begin{pmatrix} N+1 & -(N+1)\\ -1&N+2\end{pmatrix}.
\]
The eigenvalues of $^t\!\check B$ are $\lm_\pm$, and the eigenvector corresponding to $\lm_+$ is $\check\ell_+:=\begin{pmatrix}-2\\\sqrt{4N+5}+1\end{pmatrix}$.
From the construction of $\ph_{\check\chi}\in Y^*\simeq\R^2$, it coincides with $\check\ell_+$ up to a multiplicative constant.
It holds that $\check\ell_+=a\ell_++b\ell_-$ with
\[
a=-\frac{N+2}{\sqrt{4N+5}},\quad b=\frac12+\frac{2N+1}{2\sqrt{4N+5}}.
\]
In particular, $a\ne0$ and $b\ne0$.

For $n\in\N$, let $\om_n=v\underbrace{ijij\cdots ij}_{2n}\check\chi\in\{i,j\}^\N$ and $x_n=\Psi_w(\om_n)$.
Then, $\ph_{\om_n}$ is parallel to $^t\!\bar L_v(^t\!\check B)^n  \check\ell_+=a\lm_+^n\ell_+^v+b\lm_-^n\ell_-^v$.
Since $F_w$ is homogeneous of degree zero,
\[
  F_w(\ph_{v\chi})=F_w(\ell_+^v)
  \quad\text{and}\quad 
  F_w(\ph_{\om_n})=F_w\left(\ell_+^v+\frac{b}{a}\theta^n\ell_-^v\right).
\]
Therefore,
\begin{align*}
\frac{\dl \nu_h}{\dl \nu}(x_n)-\frac{\dl \nu_h}{\dl \nu}(x)
&=\dl_{h,w}(\om_n)-\dl_{h,w}(v\chi)\\
&=F_w(\ph_{\om_n})-F_w(\ph_{v\chi})\\
&=F_w\left(\ell_+^v+\frac{b}{a}\theta^n\ell_-^v\right)-F_w(\ell_+^v)\\
&=\frac{b}{a}\theta^n\partial_{\ell_-^v}F_w(\ell_+^v)+O(\theta^{2n})
\quad\text{(from Taylor's theorem)}\\
&=\frac{b}{a}2^{-2\gm_N n}\partial_{\ell_-^v}F_w(\ell_+^v)+O(2^{-4\gm_N n}).\quad\text{(from \Eq{theta})}
\end{align*}
From the assumption \Eq{nondeg}, there exists $c'>0$ such that for sufficiently large $n$,
\begin{equation}\label{eq:ge}
  \left|\frac{\dl \nu_h}{\dl \nu}(x_n)-\frac{\dl \nu_h}{\dl \nu}(x)\right|\ge c'2^{-2\gm_N n}.
\end{equation}
On the other hand, $x_n$ and $x$ are two distinct points that divide the line segment $\psi_{\text{\scriptsize $w v \underbrace{ijij\cdots ij}_{2n}$}}(\overline{p_i p_j})$ into three parts. 
Therefore,
\begin{equation}\label{eq:xnx}
|x_n-x|=\frac13 2^{-|w|-m-2n}|p_i-p_j|.
\end{equation}
Combining \Eq{ge} and \Eq{xnx}, we obtain \Eq{optimality} by relabeling the indices of the sequence $\{x_n\}$ if necessary.
\end{proof}
\begin{lem}\label{lem:critical}
Let $y\in Y\setminus\{0\}$.  Regard the function $F_w$ defined in \Eq{Fw} as a function on the projective line $\mathbb P(Y^*)$.  Then $F_w$ has exactly two critical points on $\mathbb P(Y^*)$.
\end{lem}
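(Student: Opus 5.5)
The plan is to reduce to a ratio of two binary quadratic forms on $Y^*\simeq\R^2$. Write $\ph=(\ph_1,\ph_2)$ in coordinates and set $P(\ph)=\ph(y)^2$ and $Q(\ph)=B_w(\ph,\ph)$. Then $F_w=P/Q$. Here $Q$ is positive definite by \Lem{nondegeneracy}, and $P$ is a nonzero positive semidefinite form of rank one, because $y\ne0$.

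Since both forms are even and homogeneous of degree two, $F_w$ descends to a smooth function on $\mathbb P(Y^*)\simeq\R\mathbb P^1$, which is a circle. We parametrize it by $\ph=(\cos\theta,\sin\theta)$ with $\theta\in\R/\pi\Z$.

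First simultaneously diagonalize. Since $Q$ is positive definite, there is a linear change of coordinates $\ph\mapsto\eta$ in which $Q(\ph)=\eta_1^2+\eta_2^2$. In these coordinates $P(\ph)=(\a\eta_1+\b\eta_2)^2$ for some $(\a,\b)\ne(0,0)$. Rotating the $\eta$-coordinates, which preserves $Q$, we may assume $(\a,\b)=(r,0)$ with $r>0$. Then
\[
F_w=\frac{r^2\eta_1^2}{\eta_1^2+\eta_2^2}=r^2\cos^2\theta,
\]
where $\theta$ is now the angle parameter in the $\eta$-coordinates. The change of coordinates is a linear isomorphism, so it induces a diffeomorphism of $\mathbb P(Y^*)$, and critical points correspond bijectively.

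The derivative of $r^2\cos^2\theta$ is $-r^2\sin 2\theta$. On $\R/\pi\Z$ this vanishes exactly at $\theta=0$ and $\theta=\pi/2$. These are the maximum point, where $\ph$ is $Q$-dual to $y$ and $F_w=r^2$, and the minimum point, where $\ph(y)=0$ and $F_w=0$. So there are exactly two critical points, and both are nondegenerate.

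The only step needing care is justifying that "critical point on $\mathbb P(Y^*)$" is intrinsic. The homogeneity relation $F_w(t\ph)=F_w(\ph)$ was noted before \Lem{optimality}, and with it, $\ph$ is critical for the induced function if and only if $\partial_\eta F_w(\ph)=0$ for all $\eta\in Y^*$, since the radial derivative vanishes automatically. That condition is invariant under linear changes of coordinates, so the reduction above is legitimate. I expect no real obstacle here. The hypothesis $y\ne0$ is exactly what guarantees that $P\not\equiv0$; without it $F_w\equiv0$ and every point would be critical.
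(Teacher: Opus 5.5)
Your proof is correct and is essentially the paper's argument. Orthonormalizing $B_w$ and then rotating so that $\varphi\mapsto\varphi(y)$ becomes $r\eta_1$ is exactly the paper's choice of a $B_w$-orthonormal basis $\{\xi_1,\xi_2\}$ with $\xi_1$ parallel to the $B_w$-representative $\xi$ of $y$; this gives $F_w=B_w(\xi,\xi)\cos^2 t$, with critical lines the span of $\xi$ and its $B_w$-orthogonal line, and your homogeneity remark simply makes explicit the intrinsic meaning of criticality that the paper leaves implicit.
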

\begin{proof}
Since $B_w$ is strictly positive definite, there exists a unique
$\xi\in Y^*$ such that
\[
    \varphi(y)=B_w(\varphi,\xi),
    \qquad \varphi\in Y^* .
\]
Since $y\neq0$, we have $\xi\neq0$.  Choose a $B_w$-orthonormal basis $\{\xi_1,\xi_2\}$ of $Y^*$ with $\xi_1=B_w(\xi,\xi)^{-1/2}\xi$.  Writing $\varphi=(r\cos t)\xi_1+(r\sin t)\xi_2$, we obtain
\[
    F_w(\varphi)=\frac{B_w(\ph,\xi)^2}{B_w(\ph,\ph)}=B_w(\xi,\xi)\cos^2 t.
\]
Hence, as a function on the projective line, $F_w$ has precisely two
critical points: the line spanned by $\xi$, and the $B_w$-orthogonal
line to $\xi$.  This proves the claim.
\end{proof}
\begin{lem}\label{lem:012}
Let $y\in Y\setminus\{0\}$.  Then there exists
$m\in\{0,1,2\}$ such that \Eq{nondeg} holds with $v=i^m$.\end{lem}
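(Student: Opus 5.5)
The plan is to turn condition \Eq{nondeg} into a statement about critical points of $F_w$ on the projective line $\mathbb P(Y^*)$. Then \Lem{critical} leaves only two excluded directions, while three candidate directions are available.

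\textbf{Step 1: reduce \Eq{nondeg} to non-criticality.} Fix $v=i^m$ and set $T={}^t\!\bar L_i$, so that $\ell_\pm^v=T^m\ell_\pm$. Since $\ell_+$ and $\ell_-$ are eigenvectors of ${}^t\!B$ for the distinct eigenvalues $\lm_+\ne\lm_-$, they are linearly independent. Since $T$ is invertible, $\ell_+^v$ and $\ell_-^v$ are also linearly independent, so they form a basis of $Y^*$.

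Because $F_w$ is homogeneous of degree zero, Euler's relation gives $\partial_{\ell_+^v}F_w(\ell_+^v)=0$. Hence $\partial_{\ell_-^v}F_w(\ell_+^v)=0$ holds if and only if the full differential of $F_w$ vanishes at $\ell_+^v$. This is exactly the statement that the line $[\ell_+^v]\in\mathbb P(Y^*)$ is a critical point of $F_w$ viewed as a function on $\mathbb P(Y^*)$.

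\textbf{Step 2: apply \Lem{critical}.} Since $y\ne0$, \Lem{critical} says $F_w$ has exactly two critical points on $\mathbb P(Y^*)$. It therefore suffices to show that the three lines $[\ell_+]$, $[T\ell_+]$, $[T^2\ell_+]$ are pairwise distinct. At least one of them is then non-critical, and by Step 1 the corresponding $m\in\{0,1,2\}$ satisfies \Eq{nondeg}.

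\textbf{Step 3: distinctness.} We have
\[
T=\begin{pmatrix}N+1&0\\-1&1\end{pmatrix}^{\!t}=\begin{pmatrix}N+1&-1\\0&1\end{pmatrix}.
\]
Its eigenvalues are $N+1$ and $1$, with eigenvectors ${}^t(1,0)$ and ${}^t(1,N)$ respectively. The two equalities $[T\ell_+]=[\ell_+]$ and $[T^2\ell_+]=[T\ell_+]$ would each force $\ell_+$ to be an eigenvector of $T$; for the second, note that $T\ell_+$ is an eigenvector of $T$ if and only if $\ell_+$ is, since $T$ is invertible.

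The remaining equality $[T^2\ell_+]=[\ell_+]$ would make $\ell_+$ an eigenvector of $T^2$. The eigenvalues of $T^2$ are $(N+1)^2\ne1$, and its eigenvectors are the same two lines as those of $T$. So in all three cases $\ell_+$ would have to be parallel to ${}^t(1,0)$ or ${}^t(1,N)$.

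But $\ell_+={}^t(\sqrt{4N+5}+1,-2)$ has a nonzero, negative second component. It is therefore not parallel to ${}^t(1,0)$, and not parallel to ${}^t(1,N)$ since $N>0$. This proves the three lines are distinct, and the lemma follows.

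\textbf{Main obstacle.} I expect no real difficulty here. The only point needing care is Step 1: the vanishing of a single directional derivative is equivalent to criticality on $\mathbb P(Y^*)$. This relies on degree-zero homogeneity together with the independence of $\ell_+^v$ and $\ell_-^v$, and the latter comes from invertibility of $\bar L_i$ and $\lm_+\ne\lm_-$.
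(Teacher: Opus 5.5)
Your proof is correct and follows the paper's argument. You reduce \Eq{nondeg} to non-criticality of $[\ell_+^v]$ on $\mathbb P(Y^*)$, invoke \Lem{critical}, and check that $[\ell_+]$, $[{}^t\!\bar L_i\ell_+]$, $[({}^t\!\bar L_i)^2\ell_+]$ are pairwise distinct. Your two local variations are both fine. First, you make explicit the Euler-relation step that the paper compresses into ``$\ell_-^v$ gives a nonzero tangent direction''. Second, you verify distinctness via the eigenlines ${}^t(1,0)$ and ${}^t(1,N)$ of ${}^t\!\bar L_i$, whereas the paper uses the explicit inductive formula for $({}^t\!\bar L_i)^m\ell_+$.
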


\begin{proof}
By \Lem{critical}, $F_w$ has only two critical points on
$\mathbb P(Y^*)$.
We can confirm
\[
    ({}^t\!\bar L_i)^m\ell_+
    =
    \begin{pmatrix}
        (N+1)^m(\sqrt{4N+5}+1)+\dfrac{2}{N}\{(N+1)^m-1\}\smallskip\\
        -2
    \end{pmatrix}
\]
from the concrete expressions of $\bar L_i$ and $\ell_+$ and by induction.
The first components are mutually distinct for $m=0,1,2$, while the
second component is always $-2$.  Hence these three vectors determine
three distinct points of $\mathbb P(Y^*)$.
Therefore at least one of them is not a critical point of $F_w$.
Choose such an $m\in\{0,1,2\}$, and put $v=i^m$.  Then $\ell_+^v=({}^t\!\bar L_i)^m\ell_+$ is not a critical point of $F_w$ on $\mathbb P(Y^*)$.  
Since ${}^t\!\bar L_v$ is
invertible and $\ell_+$ and $\ell_-$ are linearly independent,
$\ell_+^v$ and $\ell_-^v$ are linearly independent.  Thus
$\ell_-^v$ gives a nonzero tangent direction at
$[\ell_+^v]\in\mathbb P(Y^*)$.  Since $[\ell_+^v]$ is not a
critical point of $F_w$, we obtain \Eq{nondeg}.
\end{proof}
\begin{proof}[Proof of \Thm{main4}.]
(i) Let $u=P A_w\iota^{-1}(h)\in \tilde l(V_0)$.
Since $h$ is not constant and $A_w$ is invertible on
$l(V_0)/\mathbb R{\bf 1}$, we have $u\neq0$.  Hence there exist
distinct $i,j\in S$ such that the $Y$-component $y=(u(p_i),u(p_j))\in Y$ is nonzero.
By \Lem{012}, there exists $m\in\{0,1,2\}$ such that \Eq{nondeg} holds with $v=i^m$.
Applying Lemma 7.1, we obtain a point $x=\Psi_w\bigl(v\chi\bigr)\in\psi_w(\overline{p_ip_j})$ at which the estimate in \Thm{main4}~(i) holds.

(ii) Fix distinct $i,j\in S$.
For each $y\in Y\setminus\{0\}$, let $\xi_y\in Y^*$ be the unique element
such that
$\varphi(y)=B_w(\varphi,\xi_y)$ for $ \varphi\in Y^*$.
By the proof of \Lem{critical}, the point $[\ell_+]$ is critical for
$F_w$ if and only if $\xi_y$ is parallel to $\ell_+$ or
$B_w$-orthogonal to $\ell_+$. Since the map $y\mapsto\xi_y$ is
an isomorphism, the set of such $y$'s is the union of two proper
linear subspaces of $Y$. Hence we may choose $y\in Y\setminus\{0\}$
so that $[\ell_+]$ is not a critical point of $F_w$.
Take $\tilde u\in\tilde l(V_0)$ whose $Y$-component equals
$y$.  Since the linear operator $P A_w$ on $\tilde l(V_0)$ is an isomorphism, there exists $u\in\tilde l(V_0)$ such that
$P A_wu=\tilde u$.  Put $h=\iota(u)$.  Then the corresponding
$Y$-component of $P A_w\iota^{-1}(h)$ is $y$, and hence
$\partial_{\ell_-}F_w(\ell_+)\ne0$.
Applying \Lem{optimality} with $v=\emptyset$, we obtain the desired conclusion.
\end{proof}
\section*{Acknowledgments}
The authors used AI during the preparation of this manuscript for obtaining comments on exposition and possible proof strategies. All mathematical arguments, statements, and references were independently checked and verified by the authors, who take full responsibility for the final manuscript.

\end{document}